\newcommand{\mylabel}[2]{#2\def\@currentlabel{#2}\label{#1}}
\newcommand{\ensemblenombre}[1]{\mathbb{#1}}
\newcommand{\N}{\ensemblenombre{N}}
\newcommand{\R}{\ensemblenombre{R}}
\renewcommand{\P}{\mathbb{P}}
\newcommand{\E}{\mathbb{E}}
\newcommand{\M}{\mathbb{M}}
\newcommand{\K}{\mathbb{K}}
\newcommand{\bU}{\mathbb{U}}
\newcommand{\Ec}[1]{\mathbb{E} \left[#1\right]}
\newcommand{\Pp}[1]{\mathbb{P} \left(#1\right)}
\newcommand{\Ecsq}[2]{\mathbb{E} \left[#1\mathrel{}\middle|\mathrel{}#2\right]}
\newcommand{\Ppsq}[2]{\mathbb{P} \left(#1\mathrel{}\middle|\mathrel{}#2\right)}
\newcommand{\Var}[1]{\mathbb{V}\left(#1\right)}
\newcommand{\intervalle}[4]{\mathopen{#1}#2
                            \mathclose{}\mathpunct{},#3
                            \mathclose{#4}}
\newcommand{\intervalleff}[2]{\intervalle{[}{#1}{#2}{]}}
\newcommand{\intervalleof}[2]{\intervalle{(}{#1}{#2}{]}}
\newcommand{\intervalleoo}[2]{\intervalle{(}{#1}{#2}{)}}
\newcommand{\intervalleentier}[2]{\intervalle\llbracket{#1}{#2}
                               \rrbracket}
\newcommand{\petito}[1]{o\mathopen{}\left(#1\right)}
\newcommand{\grandO}[1]{O\mathopen{}\left(#1\right)}
\newcommand{\enstq}[2]{\left\lbrace#1\mathrel{}\middle|\mathrel{}#2\right\rbrace}
\newcommand{\ind}[1]{\mathbf{1}_{\left\lbrace #1 \right\rbrace}}  
\newcommand{\cT}{\mathcal{T}}
\newcommand{\cB}{\mathcal{B}}
\newcommand{\cA}{\mathcal{A}}
\newcommand{\cL}{\mathcal{L}}
\newcommand{\cS}{\mathcal{S}}
\newcommand{\cP}{\mathcal{P}}
\newcommand{\cG}{\mathcal{G}}
\newcommand{\cU}{\mathcal{U}}
\newcommand{\cC}{\mathcal{C}}
\newcommand{\bT}{\mathbb{T}}
\newcommand{\cH}{\mathcal{H}}
\newcommand{\cF}{\mathcal{F}}
\newcommand{\cD}{\mathcal{D}}
\newcommand{\ttT}{\mathtt{T}}
\newcommand{\ttP}{\mathtt{P}}
\newcommand{\bnu}{\boldsymbol\nu}
\newcommand{\bB}{\mathsf{B}}
\newcommand{\bRho}{\rho}
\newcommand{\bD}{\mathsf{D}}
\newcommand{\bNu}{\nu}
\newcommand{\bi}{\mathbf{i}}
\newcommand{\bj}{\mathbf{j}}
\newcommand{\sL}{\mathscr{L}}
\newcommand{\sG}{\mathscr{G}}
\newcommand{\Gam}[1]{\Gamma \left(#1\right)}
\DeclareMathOperator{\haut}{ht}
\DeclareMathOperator{\diam}{diam}
\DeclareMathOperator{\dist}{d}
\DeclareMathOperator{\Ball}{B}
\DeclareMathOperator{\supp}{supp}
\DeclareMathOperator{\wrt}{WRT}
\DeclareMathOperator{\pa}{PA}
\DeclareMathOperator{\GEM}{GEM}
\DeclareMathOperator{\ML}{ML}
\DeclareMathOperator{\MLMC}{MLMC}
\newmdtheoremenv{theorem}{Theorem}
\newmdtheoremenv{proposition}[theorem]{Proposition}
\newtheorem{lemma}[theorem]{Lemma}
\newtheorem{remark}[theorem]{Remark}
\newenvironment{hypothesis}[1]
{\hypothesismx}
{\endhypothesismx}
\begin{document}
	\title{Growing random graphs with a preferential attachment structure}
\author{Delphin Sénizergues}
\maketitle
\abstract{
	The aim of this paper is to develop a method for proving almost sure convergence in Gromov-Hausodorff-Prokhorov topology for a class of models of growing random graphs that generalises Rémy's algorithm for binary trees. We describe the obtained limits using some iterative gluing construction that generalises the famous line-breaking construction of Aldous' Brownian tree. 

In order to do that, we develop a framework in which a metric space is constructed by gluing smaller metric spaces, called \emph{blocks}, along the structure of a (possibly infinite) discrete tree. 
Our growing random graphs seen as metric spaces can be understood in this framework, that is, as evolving blocks glued along a growing discrete tree structure. Their scaling limit convergence can then be obtained by separately proving the almost sure convergence of every block and verifying some relative compactness property for the whole structure. 
For the particular models that we study, the discrete tree structure behind the construction has the distribution of an affine preferential attachment tree or a weighted recursive tree. We strongly rely on results concerning those two models and their connection, obtained in the companion paper \cite{senizergues_geometry_2019}. 
}

\section{Introduction}

We start by introducing a particular model of growing random graphs, which we call \emph{generalised Rémy's algorithm}, as an example of models that is handled by our methods.
Other models are discussed at the end of the introduction.

\subsection{A generalised version of Rémy's algorithm}
  
\begin{figure}
	\centering
	\includegraphics[height=2cm]{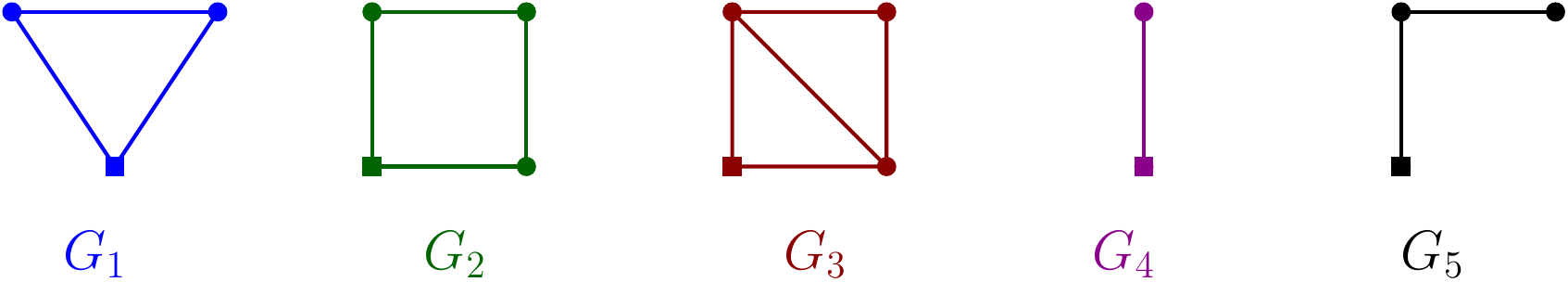}
	\caption{An example of a sequence of graphs used to run the algorithm, the root of each graph is represented by a square vertex}\label{growing:fig:exemple graphes gn}
\end{figure}

Consider $(G_n)_{n\geq 1}$ a sequence of finite connected rooted graphs and construct the sequence $(H_n)_{n\geq 1}$ recursively as follows. Let $H_1=G_1$. Then, for any $n\geq 1$, conditionally on the structure $H_n$ already constructed, take an edge in $H_n$ uniformly at random , split it into two edges by adding a vertex "in the middle" of this edge, and glue a copy of $G_{n+1}$ to the structure by identifying the root vertex of $G_{n+1}$ with the newly created vertex. Call the obtained graph $H_{n+1}$.

When all the graphs $(G_n)_{n\geq 1}$ are equal to the single-edge graph, we obtain the so-called Rémy's algorithm, which produces for each $n$ a uniform planted binary tree with $n$ leaves (if the leaves are labelled for example). Remark that this kind of generalisation of the algorithm has already been studied for particular sequences $(G_n)_{n\geq 1}$, namely for $(G_n)_{n\geq 1}$ constant equal to the star-graph with $k-1$ branches in \cite{haas_scaling_2015}, where the authors show that the obtained trees converge in the scaling limit to some fragmentation tree; and in \cite{ross_scaling_2018}, for $G_n$ being equal to the single edge graph for every $n \equiv 1 \mod \ell$, for some $\ell\geq 2$, and equal to single-vertex graph whenever $n \not\equiv 1 \mod \ell$.

We see the graphs $(H_n)_{n\geq 1}$ as measured metric spaces, by considering their set of vertices endowed with the usual graph distance and the uniform measure on vertices. It is well-known \cite{curien_stable_2013} that the sequence of trees created through the standard Rémy's algorithm with distances rescaled by $n^{-1/2}$ converges almost surely in Gromov-Hausdorff-Prokhorov topology to a constant multiple of Aldous' Brownian tree.
We give here an analogous result, under some conditions on the sequence $(G_n)_{n\geq 1}$, which ensures that the graphs $(H_n)_{n\geq 1}$ appropriately rescaled converge almost surely in the Gromov-Hausdorff-Prokhorov topology to a random compact metric space.

\begin{proposition}\label{growing:prop:convergence rémy généralisé}
Call $(a_n)_{n\geq 1}$ the respective numbers of edges in the graphs $(G_n)_{n\geq 1}$. Suppose there exists $c>0$ and $0\leq c'<\frac{1}{c+1}$ and $\epsilon>0$ such that, as $n\rightarrow\infty$,
	\begin{equation*}
	\sum_{i=1}^{n}a_i=c\cdot n \cdot \left(1+\grandO{n^{-\epsilon}}\right), \qquad and \qquad  a_n\leq n^{c'+\petito{1}},
	\end{equation*}
	then we have the following convergence, almost surely in GHP topology
	\begin{equation}\label{growing:convergence generalised remy}
	(H_n,n^{\frac{-1}{c+1}}\cdot\dist_{\mathrm{gr}},\mu_{\mathrm{unif}})\underset{n\rightarrow\infty}{\longrightarrow} (\cH,\dist,\mu).
	\end{equation}
\end{proposition}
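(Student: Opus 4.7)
The proof follows the general strategy outlined in the abstract: identify the underlying discrete block-tree structure, prove almost sure convergence of each block after rescaling, and verify the relative compactness condition of the gluing framework.

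First, I would set up the block decomposition. Define block $i$ to be the (subdivided) copy of $G_i$, and let $\ttT_n$ be the rooted tree in which $j$ is a child of $i$ whenever $G_j$ was grafted onto an edge originally belonging to $G_i$. Since the total number of edges of $H_n$ is $\sum_{j=1}^n a_j + (n-1)$ and block $i$ contains exactly $a_i + c_i(n)$ edges at step $n$ (where $c_i(n)$ is the number of children of $i$ in $\ttT_n$), the new block $n+1$ is attached to block $i$ with probability proportional to $a_i + c_i(n)$. This is precisely the dynamics of an affine preferential attachment tree with fitness sequence $(a_n)_{n\geq 1}$, and the regularity condition $\sum a_i = c n (1 + \grandO{n^{-\epsilon}})$ places us exactly in the setting where the strong asymptotic results of the companion paper~\cite{senizergues_geometry_2019} apply. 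In particular, the weights of individual blocks grow like $\rho_i \cdot n^{1/(c+1)}$ for random limiting weights $\rho_i > 0$.

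Second, I would analyse the geometry inside a single block. Conditionally on block $i$ being selected, the edge to subdivide is uniform among its current $a_i + c_i(n)$ edges, so the successive subdivisions within block $i$ follow a Pólya urn with $a_i$ initial colours of weight $1$ each. The relative proportions converge almost surely to a Dirichlet$(1,\ldots,1)$ vector $(L_{i,e})_e$, and consequently, after rescaling by $n^{-1/(c+1)}$, block $i$ converges almost surely in GHP topology to a random compact space $\cB_i$ obtained from $G_i$ by replacing each edge $e$ by a segment of length proportional to $\rho_i \cdot L_{i,e}$. Combined with step one, this yields the two main ingredients required by the general framework: almost sure convergence of each block and explicit identification of the block tree.

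The remaining task, and the technical heart of the proof, is to verify the relative compactness condition of the framework. This requires showing that the rescaled diameter of the subtree of blocks hanging below any given block becomes uniformly small in a controlled way. The upper bound $a_n \leq n^{c'+\petito{1}}$ with $c' < \frac{1}{c+1}$ is precisely what ensures that the rescaled diameter of a single block does not exceed $n^{-1/(c+1)} \cdot a_n = n^{c' - 1/(c+1) + \petito{1}} \to 0$, so each block contributes an asymptotically negligible amount. However, aggregating such bounds over the infinite collection of blocks demands uniform tail estimates on cumulated block sizes along branches of the preferential attachment tree; obtaining such estimates — again leveraging the companion paper under the regularity assumption on $\sum a_i$ — is where the main effort lies. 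Once these estimates are in place, the general framework delivers the almost sure GHP convergence~\eqref{growing:convergence generalised remy}.
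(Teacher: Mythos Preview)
Your outline is correct and follows the same route as the paper: decompose $H_n$ into blocks indexed by a preferential attachment tree with fitnesses $(a_n)$, show each block converges via a P\'olya-urn argument to a Dirichlet-weighted continuous copy of $G_i$, and invoke the non-explosion criterion using $c'<\frac{1}{c+1}$ together with the degree bounds from the companion paper.

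Two points are handled more carefully in the paper than in your sketch. First, block-level convergence must be taken in the \emph{infinitely pointed} Gromov--Hausdorff sense: one has to track not only the shape of the rescaled block but also the positions of all the grafting points (the vertices where later blocks attach), and show these converge to i.i.d.\ points under the length measure on the limiting $\cC_{G_i}$; this is Lemma~\ref{growing:lem:convergence edge-splitting} and requires an additional urn argument plus an exchangeability observation. Second, passing from GH to GHP convergence demands that the uniform measure on vertices of $H_n$, viewed as a measure on the Ulam tree, converge weakly to the limiting measure $\mu$ supported on $\partial\bU$; the paper checks this by recognising that $\nu^{(n)}(u_k)\propto b_k+\deg^+_{\ttP_n}(u_k)$ has exactly the form~\eqref{growing:eq:mesures par les degrés} covered by Proposition~\ref{growing:prop:convergence measures on wrt}. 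Your sketch does not address either of these, and without them the argument delivers only pointed-GH convergence of the skeleton rather than the full GHP statement.
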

As for Aldous' Brownian tree, the limiting random compact metric space $(\cH,\dist,\mu)$, which depends on the whole sequence $(G_n)_{n\geq 1}$, can be described as the result of an \emph{iterative gluing construction}, as defined in \cite{senizergues_random_2019}. It is a natural extension of the famous \emph{line-breaking construction} invented by Aldous \cite{aldous_continuum_1991}, but with branches that are allowed to be more complex than just segments.
\paragraph{A line-breaking construction.}
The construction of $(\cH,\dist,\mu)$ is described as follows. We first run some increasing time-inhomogeneous Markov chain $(\mathsf M_n^\mathbf{a})_{n\geq 1}$ which takes values in $\R_+$, and whose law  depends only on the sequence $\mathbf{a}=(a_n)_{n\geq 1}$. 
 
Cut the semi-infinite line $\R_+$ at the values $\mathsf M_1^\mathbf{a}, \mathsf M_2^\mathbf{a}\dots$ taken by the chain. This creates an ordered sequence of segments with length $\mathsf M_1^\mathbf{a},(\mathsf M_2^\mathbf{a}-\mathsf M_1^\mathbf{a}),(\mathsf M_3^\mathbf{a}-\mathsf M_2^\mathbf{a}),\dots$.
Now for any $n\geq 1$
\begin{enumerate}
	\item  Cut the $n$-th segment into $a_n$ sub-segments by throwing $a_n-1$ uniform points on it, and call $(L_{n,1},L_{n,2},\dots,L_{n,a_n})$ the respective length of the obtained sub-segments.
	\item Take the graph $G_n$ and replace every edge $e_k\in\{e_1,\dots ,e_{a_n}\}$, where the edges of $G_n$ are labelled in an arbitrary order, with a segment of length $L_{n,k}$. Call the result $\cG_n$. 
\end{enumerate}
Now, start from $\cH_1:=\cG_1$ and recursively when $\cH_n$ is already constructed, sample a point according to the length measure on $\cH_n$ and identify the root of $\cG_{n+1}$ to the chosen point. 
The space $\cH$ is obtained as the completion of the increasing union
\[\cH=\overline{\bigcup_{n\geq 1}\cH_n}.\]
The measure $\mu$ is the weak limit of the normalised length measure carried by the $\cH_n$'s.

\subsection{Metric spaces glued along a tree structure}
We introduce a general framework that allows us to handle objects that are defined as the result of gluing together metric spaces along a discrete tree structure. Consider the Ulam tree with its usual representation as
\begin{equation}
\bU=\bigcup_{n\geq 0} \N^n.
\end{equation}
We say that $\cD=\left(\cD(u)\right)_{u\in\bU}$ is a \emph{decoration} on the Ulam tree if for any $u\in \bU$,
\begin{equation*}
\cD(u)=(D_u,d_u,\rho_u,(x_{ui})_{i\geq 1}),
\end{equation*}
is a compact rooted metric space, with underlying set $D_u$, distance $\dist_u$, rooted at a point $\rho_u$ and endowed with a sequence $(x_{ui})_{i\geq 1}\in D_u$.
Then for any such decoration $\cD$, we make sense of the following metric space $\sG(\cD)$, which is informally what we get if we take the disjoint union $\bigsqcup_{u\in\bU}D_u$ and identify every root $\rho_{ui}\in D_{ui}$ to the distinguished point $x_{ui}\in D_u$ for every $u\in\bU$ and every $i\in\N$, and take the metric completion of the obtained metric space. 

This setting also encompasses the case where we only glue a finite number of blocks along a plane tree. 
If $\tau$ is a plane tree, it can be natural to consider a decoration $\cD=(\cD(u))_{u\in \tau}$ which is only defined on the vertices of $\tau$ and is such that for all $u\in\tau$, the block $\cD(u)=(D_u,d_u,\rho_u,(x_{ui})_{1\leq i\leq \deg_\tau^+(u)})$ is only endowed with a finite number of distinguished points that corresponds to the number $\deg_{\tau}^+(u)$ of children of $u$ in $\tau$.
In this case we automatically extend $\cD$ by letting $\cD(u)$ be the one-point space $(\star,0,\star,(\star)_{i\geq 1})$ for all $u\notin \tau$ and by letting $x_{ui}=\rho_i$ for all $u\in\tau$ and $i> \deg_{\tau}^+(u)$.
Thanks to this identification, we always consider decorations that are defined on the whole Ulam tree $\bU$ and for which all the blocks have infinitely many distinguished points.

\paragraph{Convergence of metric spaces glued along $\bU$.}
For a sequence $(\cD_n)_{n\geq 1}$ of decorations, we have a sufficient condition for the convergence of the sequence $(\sG(\cD_n))_{n\geq 1}$ in the Gromov-Hausdorff topology. 
Indeed, we will see in Theorem~\ref{growing:thm:continuité du recollement} that it suffices that for every $u\in\bU$, we have the convergence $\cD_n(u)\rightarrow\cD_\infty(u)$ for some decoration $\cD_\infty$ in the some appropriate infinitely pointed Gromov-Hausdorff topology and that the sequence of decorations $(\cD_n)_{n\geq 1}$ satisfies the following relative compactness property
\begin{equation*}
\underset{\theta \text{ finite}}{\inf_{\theta\subset \bU}} \sup_{u\in\bU}\left( \underset{v\notin \theta}{\sum_{v\prec u}} \sup_{n\geq 1}\diam(\cD_n(v))\right)=0,
\end{equation*}
to get $\sG(\cD_n)\rightarrow \sG(\cD_\infty)$ in the Gromov-Hausdorff topology as $n\rightarrow\infty$. With some appropriate assumptions, we can also endow these metric spaces with measures and get a similar statement in Gromov-Hausdorff-Prokhorov topology. 
We recall the definition and some properties of those topologies in Section~\ref{growing:subsubsec:ghp topology}.

\paragraph{Scaling limit for the generalised Rémy algorithm.}
\begin{figure}
	\centering
	\begin{tabular*}{15cm}{ccc}
		\subfloat[A realisation of $H_5$\label{growing:subfig:a realisation of H5}]{\includegraphics[height=3.5cm]{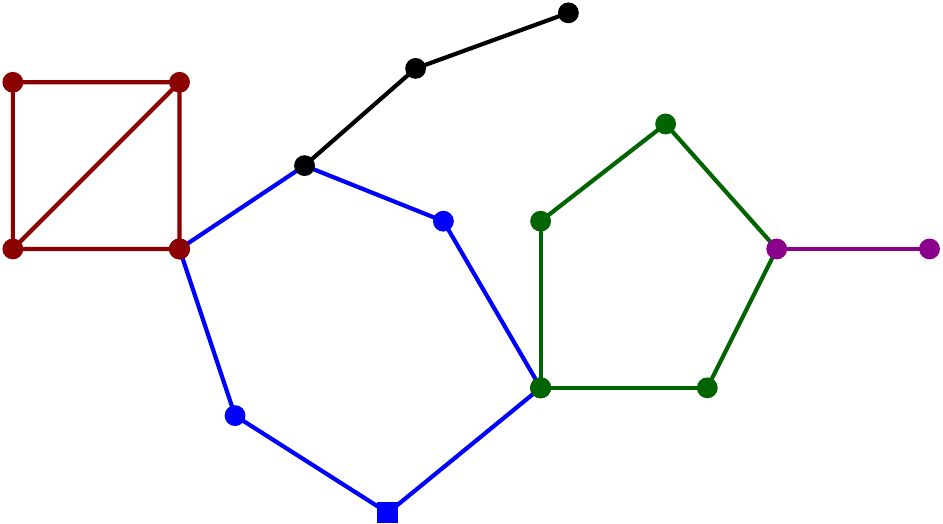}}&
		\subfloat[The tree $\ttP_5$\label{growing:subfig:the tree P5}]{\includegraphics[height=2.5cm]{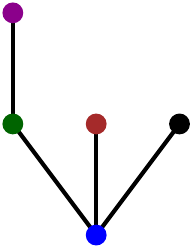}} &
		\subfloat[The graph $H_5$ seen as the gluing of a decoration\label{growing:subfig:the graph H5 as a gluing}]{\includegraphics[height=6cm]{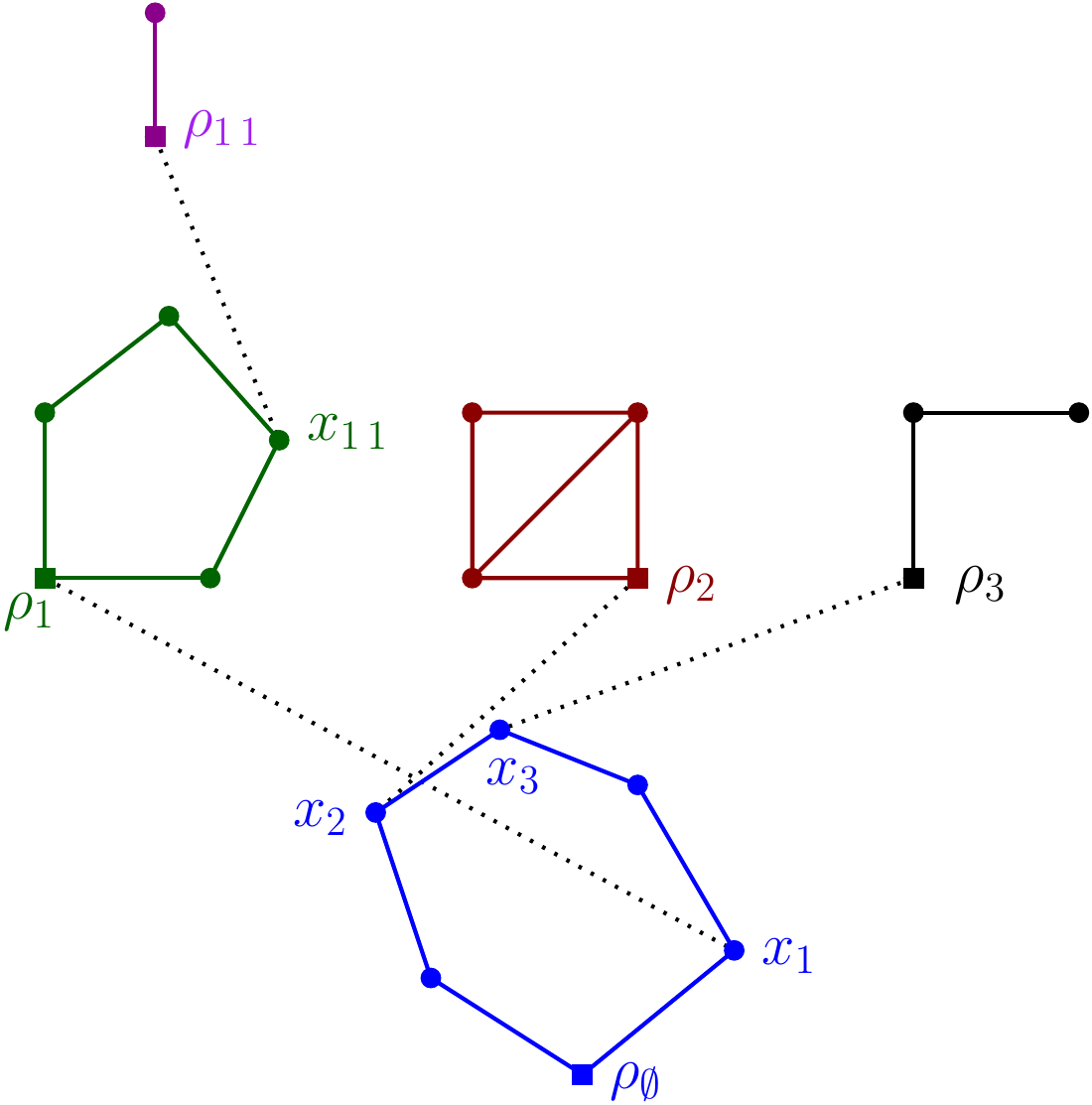}}
	\end{tabular*}
	\caption{Decomposition of $H_5$ as the gluing of some decoration}\label{growing:fig:decomposition le long d'un arbre}
\end{figure}
This will allow us to prove Proposition~\ref{growing:prop:convergence rémy généralisé}.
The idea is to interpret $(H_n)_{n\geq 1}$ in this framework by constructing a sequence of decorations $(\cD_n)_{n\geq 1}$, in such a way that for all $n\geq 1$ the graph $H_n$ seen as a metric space coincides with $\sG(\cD_n)$.
That way, the problem of understanding the whole structure of $H_n$ is decomposed into the easier problem of understanding separately all the $\cD_n(u)$ for all $u\in\bU$.

For this particular construction, this is done in the following way: as pictured in Figure~\ref{growing:fig:exemple graphes gn} we colour the vertices and edges of each graph in the sequence $(G_n)_{n\geq 1}$ with distinct colours for different graphs. 
Then we keep those colours in the construction of the graphs $(H_n)_{n\geq 1}$, and use the rule that every time that an edge is split by the algorithm, the two resulting edges have the same colour as the original edge. See Figure~\ref{growing:subfig:a realisation of H5} for a realisation of $H_5$ using the coloured graphs of Figure~\ref{growing:fig:exemple graphes gn}.
We can naturally couple this construction with that of an increasing sequence of plane trees $(\ttP_n)_{n\geq 1}$, in which every one of the $n$ vertices of $\ttP_n$ corresponds to one of the $n$ colours present in $H_n$: two vertices of $\ttP_n$ are linked by an edge if and only if their corresponding colours are adjacent in the graph $H_n$ (the left-to-right order of children in the plane tree is given by the order of creation of the vertices), see Figure~\ref{growing:subfig:the tree P5}.
The distribution of the sequence $(\ttP_n)_{n\geq 1}$ is that of a preferential attachment tree with sequence of initial fitnesses $\mathbf{a}=(a_n)_{n\geq 1}$, whose definition is recalled in Section~\ref{growing:sec:pa and wrt},
and the construction of the decoration $\cD_n$ is pictured in Figure~\ref{growing:subfig:the graph H5 as a gluing} and simply corresponds to decomposing the graph $H_n$ into $n$ pieces of graph with a single colour, glued along the tree $\ttP_n$.

With this particular construction, each process $(\cD_n(u))_{n\geq 1}$ for a fixed $u\in\bU$ only evolves at times $n$ when the degree of $u$ evolves in the tree $(\ttP_n)_{n\geq 1}$ and stays constant otherwise. Also, at times where the block $\cD_n(u)$ evolves, it does so independently of all the other blocks and follows some simple dynamics. This allows us to study the evolution of the processes $(\cD_n(u))_{n\geq 1}$, including their scaling limit, separately for every $u\in\bU$. 

The fact that the limiting metric space can be described using an iterative gluing construction depends crucially on the fact that the distribution of the trees $(\ttP_n)_{n\geq 1}$ can also be expressed as that of a weighted recursive tree (see Section~\ref{growing:sec:pa and wrt} for a definition) using the random sequence $(\mathsf{m}^\mathbf{a}_n)_{n\geq 1}:=(\mathsf M^\mathbf a_n-\mathsf M^\mathbf a_{n-1})_{n\geq 1}$ to which we referred above in definition of the line-breaking construction.

\paragraph{Two families of continuous distributions on decorations.}
Aside from iterative gluing constructions, a example of which we already mentioned, we define the family of \emph{self-similar} decorations.
Under some assumptions, the distribution of the gluing $\sG(\cD)$ of some random self-similar decoration $\cD$ is the unique fixed point of some contraction in an appropriate space of distribution on metric spaces, in the same spirit as the self-similar random trees of Rembart and Winkel in \cite{rembart_recursive_2018}. 
Some distributions on decorations can be belong to both those families, which is often the case for distributions arising as scaling limits of some natural discrete models.

\subsection{Scope of our results and their relation to previous work}
Let us discuss the results proved in this paper and how they are related to the existing literature.
\paragraph{Subcases of the generalised Rémy's algorithm.}
Proposition~\ref{growing:prop:convergence rémy généralisé} already encompasses several models that were already studied using other methods, when specifying particular sequences of graphs $(G_n)_{n\geq 1}$.
\begin{itemize}
	\item Of course, we recover the convergence for the standard Rémy's algorithm whenever $(G_{n})_{n\geq 1}$ is constant and taken to be a single-edge graph.
	\item When $(G_{n})_{n\geq 1}$ is constant equal to a vertex with a single loop, the model is equivalent to the looptree of the linear preferential attachment tree, and we recover the convergence proved in \cite{curien_scaling_2015}.
	\item In \cite{haas_scaling_2015}, Haas and Stephenson study the case where $G_1$ is the single-edge graph and the sequence $(G_n)_{n\geq 2}$ is constant equal to the star-graph with $k-1$ branches, for $k\geq 2$. They describe the scaling limit as a fragmentation tree, as introduced in \cite{haas_genealogy_2004}. In this case, we improve their convergence which was only in probability and give another construction of the limit.
	\item Let us also cite the work of Ross and Wen \cite{ross_scaling_2018}, whose model (depending on a integer-valued parameter $\ell\geq 2$) is obtained by setting $G_{n}$ to be a single-edge graph if $n-1$ is a multiple of $\ell$, and reduced to a single vertex otherwise. We recover their results.
	
	\item In a recent work of Haas and Stephenson \cite{haas_scaling_2019} the authors also study the case where $(G_n)_{n\geq 1}$ is taken as an i.i.d.\ sequence of rooted trees taken from a finite set. They describe the limit as a multi-type fragmentation tree as introduced in \cite{stephenson_exponential_2018}. Again, our result ensures that the convergence is almost sure in the Gromov-Hausdorff-Prokhorov topology and give another construction of the limit.
\end{itemize}
\paragraph{Other models of growing random graphs.}
Our general method can be applied to various models of growth such as Ford's $\alpha$-model \cite{ford_probabilities_2005}, Marchal's algorithm \cite{marchal_note_2008} or their generalisation the $\alpha-\gamma$-growth \cite{chen_new_2009}, possibly started from an arbitrary graph. The same methods apply also for discrete \emph{looptrees} associated to those models (using an appropriate planar embedding) or to planar preferential attachment trees. Notably:
\begin{itemize}
	\item We improve the convergence \cite{haas_continuum_2008,haas_scaling_2012} of Ford trees and $\alpha$-$\gamma$-trees, from convergence in probability to almost sure convergence, and also prove the convergence of their respective discrete looptrees to continuous limits which can be described as the result of iterative gluing constructions, or as self-similar random metric spaces.
	\item We provide a new iterative gluing construction for $\alpha$-stable trees and $\alpha$-stable components, different from the ones appearing in  \cite{goldschmidt_line_2015,goldschmidt_stable_2018}.
	\item We prove a conjecture of Curien et al. \cite{curien_scaling_2015} for the scaling limits of looptrees of planar preferential attachment trees with offset $\delta$, and describe the limit as an iterative gluing construction with circles. 
\end{itemize}
\subsection{Organisation of the paper}
This paper is organised as follows.

We start in Section \ref{growing:sec:gluing metric spaces along the ulam tree} by developing a framework that allows us to define the gluing of infinitely many metric spaces along the structure of the Ulam tree. We prove Theorem~\ref{growing:thm:continuité du recollement} which ensures that this procedure is continuous in some sense with respect to the blocks that we glue together, as soon as they satisfy some relative compactness property.
Then in Section~\ref{growing:sec:pa and wrt} we recall some properties of affine preferential attachment trees and weighted recursive trees that were proved in the companion paper \cite{senizergues_geometry_2019}, and on which our study of sequences of growing graphs in Section~\ref{growing:sec:applications} strongly relies. 
In Section~\ref{growing:sec:distributions on decorations} we present two families of distributions on decorations, the iterative gluing constructions and the self-similar decorations, which can appear as continuous limits of the discrete distributions that we study.  
We derive some of their properties, in particular we give some sufficient condition for the associated metric space obtained under the gluing map $\sG$ to be compact almost surely.
We also provide examples of random decorations that belong to both families of distributions.
Last, in Section~\ref{growing:sec:applications}, we apply the preceding result to obtain scaling limits of some families of growing random graphs. We first start by proving Theorem~\ref{growing:thm:metatheorem}, which is the general case in which our scaling limit results apply. The rest of the section is devoted to applying this theorem to examples of growing random graphs.

Some definitions and results for classical models that are useful to our proofs are recalled or proved in Appendix~\ref{growing:sec:computations}.

\setcounter{tocdepth}{2}
\tableofcontents

\section{Gluing metric spaces along the Ulam tree}\label{growing:sec:gluing metric spaces along the ulam tree}
In this section, we introduce what we call \emph{decorations} on the Ulam tree, which are a families of infinitely pointed compact metric spaces, indexed by the vertices of the Ulam tree. This structure should be thought as a plan that specifies how to construct a metric space by gluing together all those decorations onto one another, along the structure of the Ulam tree. We then provide sufficient conditions that ensures that the resulting metric space is compact and depends continuously on the decorations in a sense that we make precise. 

\subsection{The Ulam tree}
\paragraph{The completed Ulam tree.}
Recall the definition of the Ulam tree $\bU=\bigcup_{n\geq 0} \N^n$ with $\N=\{1,2,\dots\}$. We also introduce the set $\partial\bU=\N^\N$ to which we refer as the \emph{leaves} of the Ulam tree, which we see as the infinite rays joining the root to infinity and let $\overline{\bU}:=\bU\cup \partial\bU$. On this set, we have a natural genealogical order $\preceq$ defined such that $u\preceq v$ if and only if $u$ is a prefix of $v$. From this order we can define for any $u\in\bU$ the subtree descending from $u$ as the set $T(u):=\enstq{v\in \overline{\bU}}{u\preceq v}$. The collection of sets $\{T(u),\ u\in\bU\}$ and $\{\{u\},\ u\in\bU\}$ generate a topology on $\overline{\bU}$, which can also be generated using an appropriate ultrametric distance. Endowed it this distance the set $\overline{\bU}$ is then a separable and complete metric space. 

In the text, we will consider Borel probability measures on this metric space $\overline\bU$. For those measures, there is a simple characterisation of weak convergence given by the following lemma, see \cite[Lemma~6]{senizergues_geometry_2019}.
\begin{lemma}\label{growing:characterisation convergence measures}
	Let $(\pi_n)_{n\geq 1}$ be a sequence of Borel probability measures on $\overline\bU$. Then $(\pi_n)_{n\geq 1}$ converges
	weakly to a probability measure $\pi$ on $\overline\bU$ if and only if for any $u\in\bU$,
	\begin{align*}
	\pi_n(\{u\})\rightarrow \pi(\{u\}) \quad \text{and} \quad  \pi_n(T(u))\rightarrow \pi(T(u))\qquad \text{as }n\rightarrow\infty. 
	\end{align*}
\end{lemma}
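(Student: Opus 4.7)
The plan is to establish both directions via the Portmanteau theorem, leveraging the fact that $\overline{\bU}$ carries an ultrametric topology in which each basic set $\{u\}$ and $T(u)$ with $u\in\bU$ is clopen, hence has empty topological boundary.

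For the forward direction, Portmanteau gives $\pi_n(A)\to\pi(A)$ for every Borel $A$ with $\pi(\partial A)=0$; applied to $A=\{u\}$ and $A=T(u)$ this immediately yields both required convergences. This direction is essentially one line.

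The converse is the substantive direction. By Portmanteau it suffices to show $\liminf_n \pi_n(U)\geq \pi(U)$ for every open $U\subset\overline{\bU}$. The key step is to decompose any such $U$ as a \emph{countable disjoint} union of basic clopen pieces on which the hypothesis applies termwise. Concretely, I would let $S_2$ be the set of $\preceq$-minimal elements of $\{v\in\bU : T(v)\subset U\}$ and set $S_1=\{u\in U\cap\bU : T(u)\not\subset U\}$, and check that
$$U=\bigsqcup_{u\in S_1}\{u\}\;\sqcup\;\bigsqcup_{v\in S_2}T(v).$$
Coverage of $\xi\in\partial\bU\cap U$ uses that every open neighbourhood of $\xi$ contains some $T(v)$ with $v$ a prefix of $\xi$, and then $S_2$ contains the minimal such $v$ (the prefixes of $\xi$ are well-ordered by $\preceq$). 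Coverage of $\xi\in\bU\cap U$ splits into the two cases $T(\xi)\subset U$ (then the minimal ancestor $v^*$ of $\xi$ with $T(v^*)\subset U$ belongs to $S_2$) or $T(\xi)\not\subset U$ (then $\xi\in S_1$). Disjointness follows from the tree structure: overlapping $T(v_1)$ and $T(v_2)$ with $v_1,v_2\in S_2$ would force $v_1\preceq v_2$ or vice versa, contradicting minimality; and any $u\in S_1$ lying in some $T(v)$ with $v\in S_2$ would give $T(u)\subset T(v)\subset U$, contradicting $u\in S_1$.

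With the decomposition in hand, Fatou's lemma applied to the countable sum $\sum_{u\in S_1}\pi_n(\{u\})+\sum_{v\in S_2}\pi_n(T(v))$ (using the pointwise convergences from the hypothesis to replace $\liminf$ by the genuine limit on each term) immediately gives $\liminf_n\pi_n(U)\geq\pi(U)$, closing the argument via Portmanteau. The main obstacle is producing the disjoint decomposition of $U$ cleanly and verifying both its coverage and its disjointness; once that is set up, Fatou finishes the proof in one line.
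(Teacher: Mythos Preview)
The paper does not actually prove this lemma; it simply cites it as \cite[Lemma~6]{senizergues_geometry_2019}. So there is no in-paper argument to compare against.

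Your proof is correct and complete. The forward direction is immediate from Portmanteau since, as you observe, both $\{u\}$ and $T(u)$ are clopen in the stated topology (they are open by definition of the generating family, and closed because $\overline{\bU}$ is a metric space for the singletons and because the complement of $T(u)$ is covered by basic open sets for the subtrees). For the converse, your disjoint decomposition of an arbitrary open $U$ into pieces $\{u\}$ with $u\in S_1$ and $T(v)$ with $v\in S_2$ is sound: the generating family is already a basis (finite intersections of such sets are again of the same form or empty), every $v$ with $T(v)\subset U$ has only finitely many ancestors so a $\preceq$-minimal one exists, and your coverage and disjointness checks go through as written. Since $S_1,S_2\subset\bU$ are countable, the Fatou step on the resulting countable sum is legitimate and yields $\liminf_n\pi_n(U)\geq\pi(U)$, which closes the argument via Portmanteau.
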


\paragraph{Plane trees as subsets of $\bU$.}
Classically, a plane tree $\tau$ is defined as a finite non-empty subset of $\bU$ such that
\begin{enumerate}
	\item if $v\in\tau$ and $v=ui$ for some $i\in\N$, then $u\in \tau$,  
	\item for all $u\in\tau$, there exists a number in $\N\cup \{0\}$, denoted by $\deg^{+}_\tau(u)$, such that for all $i\in \N$,  $ui\in\tau$ iff $i\leq \deg^{+}_\tau(u)$.
\end{enumerate} 
We denote $\bT$ the set planes trees.

\paragraph{Elements of notation.}
Let us define some pieces of notation.
\begin{itemize}
\item Elements of $\overline{\bU}$ are defined as finite or infinite sequences of integers, which we handle as words on the alphabet $\N$. We usually use the symbols $u$ or $v$ to denote elements of this space. 
\item Sometimes we also use a bold letter $\bi$ to denote a finite or infinite word $\bi =i_1i_2\dots$. In this case, for any integer $k$ smaller than the length of $\bi$ we also write $\bi_k=i_1\dots i_k$ the word truncated to its $k$ first letters.
\item For any two $u,v\in\overline{\bU}$, we write $u\wedge v$ the most recent common ancestor of $u$ and $v$.
\item For any $u\in\bU$, the height of $u$ is the unique number $n$ such that $u\in\N^n$. We denote it $\haut(u)$ when $u$ or sometimes also $\abs{u}$. 
\end{itemize}

\subsection{Decorations on the Ulam tree}\label{growing:subsec:decorations on the ulam tree}
We call any function $f:\bU\rightarrow E$ from the Ulam tree to a space $E$ an \emph{$E$-valued decoration} on the Ulam tree.  

\paragraph{Real-valued decorations.}
As a first example, a function $\ell:\bU\rightarrow\R_+$ is a real-valued decoration on the Ulam tree. As this will be useful later on, we introduce the following terminology. We say that $\ell$ is \emph{non-explosive} if 
\begin{equation}
\underset{\theta \text{ finite}}{\inf_{\theta \in \bT}}\sup_{u\in\bU}\left( \underset{v\notin\theta}{\sum_{v\preceq u}}\ell(v)\right)=0.
\end{equation}

\paragraph{Metric space-valued decorations.}
The main objects studied in this paper are metric-space valued decorations $\cD:\bU\rightarrow\M^{\infty\bullet}$, where the set $\M^{\infty\bullet}$ is the set of non-empty compact metric spaces endowed with an infinite sequence of distinguished points, up to isometry (see below for a proper definition).
More precisely
\begin{align*}
\cD:u\mapsto\cD(u)=\left(D_u,d_u,\rho_u,(x_{ui})_{i\geq 1}\right),
\end{align*}
where $D_u$ is a set, $d_u$ is a distance function of $D_u$, and $\rho_u$ and the $(x_{ui})_{i\geq 1}$ are distinguished points of $D_u$. The point $\rho_u$ is called the \emph{root} of $\cD(u)$, and we call $\cD(u)$ a \emph{block} of the decoration. In all the paper, the word "decoration" always means "$(\M^{\infty\bullet})$-valued decoration", unless specified otherwise. 

Let us define a particular element of $\M^{\infty \bullet}$, which we call the \emph{trivial} or \emph{one-point} space $(\{\star\},0,\star,(\star)_{i\geq 1})$. For any decoration $\cD$, the subset $S\subset\bU$ of elements $u$ for which $\cD(u)$ is not trivial is called the \emph{support} of the decoration $\cD$. 
In the rest of the paper we will often consider decorations that are supported on finite plane trees.

For $a>0$, we will use the notation $a\cdot \cD$ to denote the decoration created from $\cD$ by multiplying all the distances in all the blocks by a factor $a$.

\paragraph{The gluing operation.}
\begin{figure}
	\begin{center}
		\begin{tabular}{cc}
			\subfloat[The path between $\bi$ and $\bj$ in the Ulam tree]{\includegraphics[height=8cm]{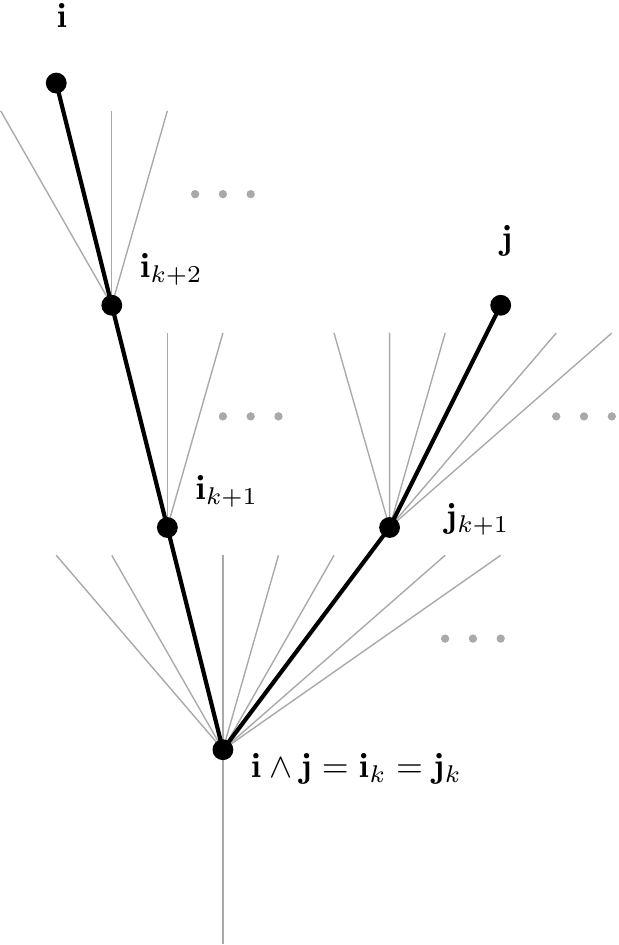}\label{growing:fig:path in ulam tree}} & 			\subfloat[The contribution from every block along the path]{\includegraphics[height=10cm]{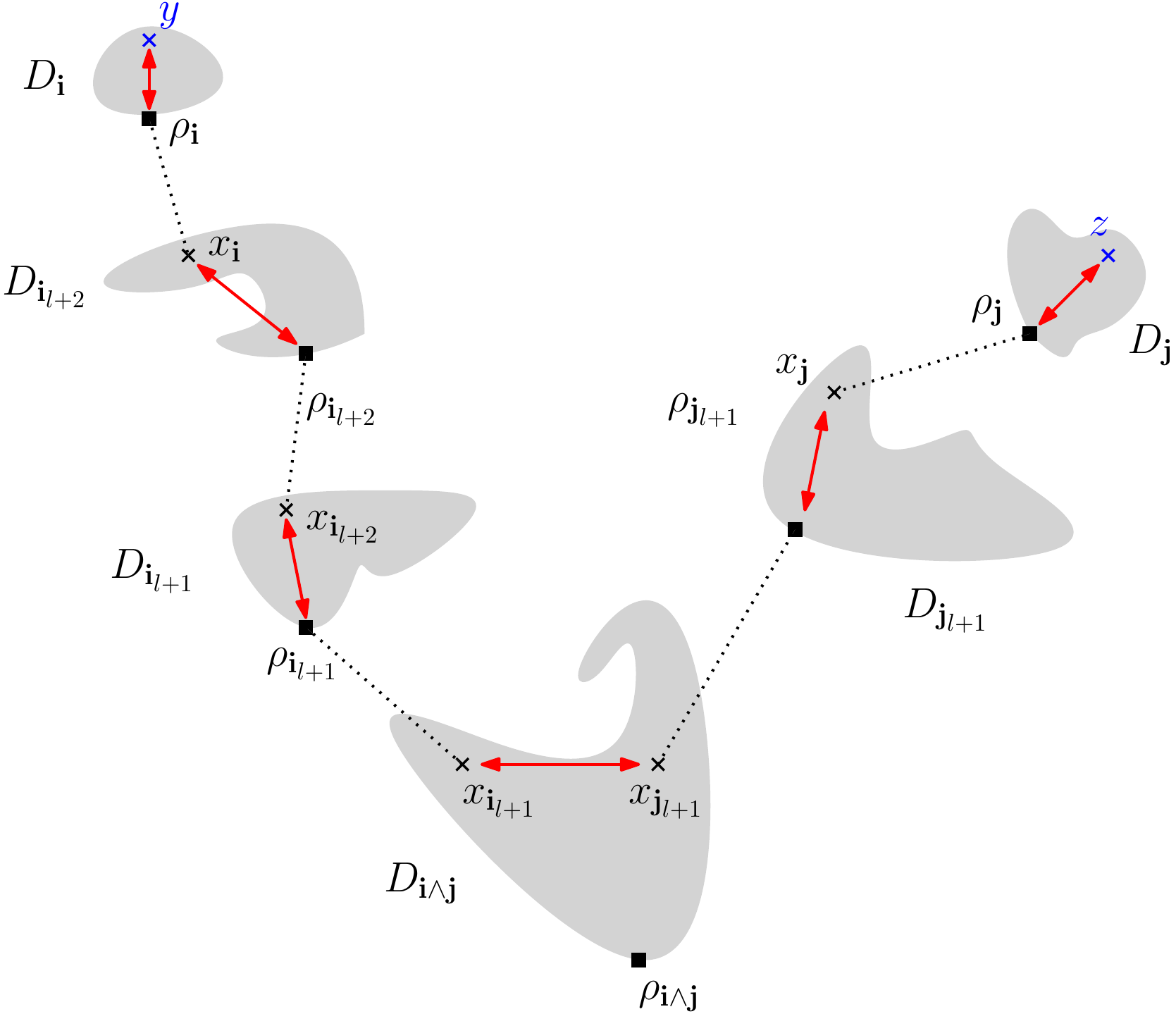}\label{growing:fig:distances in the gluing}}
		\end{tabular}
		\caption{The distance between two points is computed as the sum of the contributions denoted in red, computed using the distance in the corresponding block.}\label{growing:fig:the distance computed by suming} 
	\end{center}
\end{figure}
We define a \emph{gluing operation} $\sG$ on the set of metric space valued decorations $(\M^{\infty\bullet})^\bU$.
For any $\cD=\left(\cD(u)\right)_{u\in\bU}$, we first define a metric space $\sG^*(\cD)$ as
\begin{equation}
\sG^*(\cD)=\left(\bigsqcup_{u\in\bU} D_u\right)/\sim,
\end{equation}
where the equivalence relation $\sim$ is such that for every $u\in \bU$ and $i\in\N$ the root $\rho_{ui}$ of $D_{ui}$ is in relation with the distinguished point $x_{ui}\in D_{u}$. 
The distance $\dist$ on the set $\sG^*(\cD)$ is then the one corresponding to the metric gluing of the blocks along the relation $\sim$, in the sense of \cite{burago_course_2001}.
This distance is defined as follows. For all $\bi=i_1i_2\dots i_n$ and $\bj=j_1j_2\dots j_m$ and points $y\in D_\bi$, $z\in D_\bj$,
\begin{itemize}
\item if $\bi=\bj$ then 
\begin{align*}
\dist(y,z)=\dist(z,y)=d_\bi(y,z),\\
\end{align*}
\item if $\bj\prec \bi$ then 
\begin{align*}
\dist(y,z)=\dist(z,y)=d_\bi(y,x_{\bj_{n+1}})+\sum_{k=n+1}^{m-1}d_{\bj_{k}}(\rho_{\bj_k},x_{\bj_{k+1}})+d_\bj(\rho_\bj,z),
\end{align*}
\item and if $\bi\wedge \bj=\bi_l=\bj_l$ is different from $\bi$ and $\bj$ we let
\end{itemize}
\begin{align*}
	\dist(y,z)=\dist(z,y)	=d_{\bi_k}(x_{\bi_{k+1}},x_{\bj_{k+1}})+&\sum_{k=l+1}^{n-1}d_{\bi_{k}}(\rho_{\bi_k},x_{\bi_{k+1}})+d_\bi(\rho_\bi,z)\\
	&+\sum_{k=l+1}^{m-1}d_{\bj_{k}}(\rho_{\bj_k},x_{\bj_{k+1}})+d_\bj(\rho_\bj,z).
\end{align*}
This last configuration is illustrated in Figure~\ref{growing:fig:the distance computed by suming}.  
We then set
\begin{equation*}
\sG(\cD)=\overline{\sG^*(\cD)},
\end{equation*}
its metric completion for the distance $\dist$. We also let $\sL(\cD)=\sG(\cD)\setminus\sG^*(\cD)$ be its \emph{set of leaves}.

Whenever the associated function $\ell:\bU\rightarrow\R_+$ defined as $u\mapsto \ell(u)=\diam(D_u)$ is non-explosive, it is easy to see that the defined object $\sG(\cD)$ is compact, and it can be approximated by gluing only finitely many blocks of the decoration.

Remark that if $\cD$ is supported on a plane tree $\tau$, then for any $u\in\tau$ the result of the gluing operation does not depend on the distinguished points $(x_{ui})_{i\geq \deg_\tau^+(u)+1}$ of $\cD(u)$ with index greater than $\deg_\tau^+(u)+1$.
\paragraph{Identification of the leaves.}
Suppose that $\cD$ is such that $\sG(\cD)$ is compact. Then there exists a natural map 
\begin{equation}\label{growing:eq:def de iota}
\iota_\cD:\partial \bU\rightarrow \sG(\cD),
\end{equation}
 that maps every leaf of the Ulam-Harris tree to a point of $\sG(\cD)$.
Indeed, for any $\mathbf{i}=i_1i_2\dots \in\partial \bU$, we define 
\begin{align*}
\iota_{\cD}(\mathbf{i})= \lim_{n\rightarrow\infty} x_{\bi_n} \in \sG(\cD),
\end{align*}
and the limit exits because of the compactness of the space. It is then straightforward to see that this map is continuous.

\paragraph{Measure-valued decorations.}
Let $\cD$ be a metric space-valued decoration. 
Suppose that we have a family $\boldsymbol{\nu}:u\mapsto(\nu_u)_{u\in\bU}$ such that $\nu_u$ is a Borel measure on $\cD(u)$, for all $u\in \bU$.
Then we can define a corresponding measure $\nu$ on $\bU$, so that for all $u\in\bU, \ \nu(\{u\})=\nu_u(D_u)$. We define the \emph{support} of $\boldsymbol{\nu}$ as the support of the corresponding measure $\nu$ on $\bU$. 

In this setting, we can define in a natural way a measure on $\sG(\cD)$ by seeing $\sum_{u\in\bU}\nu_u$ as a measure on $\sG(\cD)$, identifying every block as a subspace of $\sG(\cD)$. In this case, we write 
\begin{equation}
\sG\left(\cD,\bnu\right)
\end{equation}
for the corresponding measured metric space. In the case where $\sG(\cD)$ is compact, then the function $\iota_{\cD}:\partial\bU\rightarrow\sG(\cD)$ is well-defined and continuous so that if $\mu$ denotes a measure on $\partial\bU$, then we can consider the push-forward measure $(\iota_{\cD})_*\mu$ on $\sG(\cD)$. In this case we write
\begin{equation}\label{growing:eq:def GDmu}
	\sG(\cD,\mu)=(\sG(\cD),(\iota_{\cD})_*\mu),
\end{equation}
which is a measured metric space.
We can now state the main result of Section~\ref{growing:sec:gluing metric spaces along the ulam tree}.
\begin{theorem}\label{growing:thm:continuité du recollement}
	Suppose that $\left(\cD_n\right)_{n\geq1}$ is a sequence of decorations and that there exists a decoration $\cD_\infty$ such that for every $u\in\bU$,
	\begin{align*}
	\cD_n(u)\underset{n\rightarrow \infty}{\longrightarrow}\cD_\infty(u),
	\end{align*}
	for the infinitely pointed Gromov-Hausdorff-Prokhorov topology and such that the associated real-valued decoration $(\ell \colon u\mapsto \sup_{n\geq 1}\diam(\cD_n(u)))$ is non-explosive.
	Then, the following properties hold.
	\begin{enumerate}
		\item\label{growing:it:convergence gh} We have the convergence 
		\begin{equation*}
		\sG(\cD_n)\longrightarrow\sG\left(\cD_\infty\right) \quad \text{as  $n\rightarrow\infty$ for the Gromov-Hausdorff topology.}
		\end{equation*}
		\item \label{growing:it:convergence ghp} Furthermore, suppose that for all $n\geq 1$, we have $\bnu_n=(\nu_{u,n})_{u\in \bU}$, measures over $\cD_n$ such that the corresponding measures $(\nu_n)_{n\geq 1}$ are probabilities on $\bU$ and converge weakly in $\overline{\bU}$ as $n\rightarrow\infty$ to some probability measure $\nu_\infty$ that only charges $\partial\bU$, then we have the convergence
		\begin{equation*}
		\sG(\cD_n,\bnu_n)\longrightarrow\sG(\cD_\infty,\nu_\infty) \quad \text{as $n\rightarrow\infty$},
		\end{equation*} 
		 for the Gromov-Hausdorff-Prokhorov topology.
	\end{enumerate}
\end{theorem}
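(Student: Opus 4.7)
The natural approach is a truncation argument in two stages: first approximate the full gluing by a gluing over a finite plane tree, then pass to the limit on those finite gluings where the pointwise block convergence can be directly exploited.

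Given $\varepsilon>0$, the non-explosivity of $\ell:u\mapsto \sup_{n\geq 1}\diam(\cD_n(u))$ allows one to choose a finite plane tree $\theta\subset\bU$ such that
\[
\sup_{u\in\bU}\Bigl(\sum_{v\preceq u,\,v\notin\theta}\ell(v)\Bigr)<\varepsilon.
\]
Since $\diam(\cD_\infty(u))\leq \ell(u)$ by lower semicontinuity of the diameter under Gromov--Hausdorff convergence, this tree $\theta$ controls the tail of $\cD_\infty$ as well. For any decoration $\cD$ let $\cD^{\theta}$ denote the truncated decoration that equals $\cD(u)$ on $\theta$ and the one-point space off $\theta$, and set $\sG_\theta(\cD):=\sG(\cD^\theta)$. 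The explicit distance formula for the gluing shows that $\sG_\theta(\cD)$ embeds isometrically in $\sG(\cD)$ and every point of $\sG(\cD)$ is at distance at most $\sup_{u\in\bU}\sum_{v\preceq u,v\notin\theta}\diam(\cD(v))\leq \varepsilon$ from it, uniformly in $n\in\N\cup\{\infty\}$.

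The core step is to prove, for each fixed finite tree $\theta$, that $\sG_\theta(\cD_n)\to\sG_\theta(\cD_\infty)$ in Gromov--Hausdorff topology. Since $\theta$ is finite and $\cD_n(u)\to \cD_\infty(u)$ in the infinitely pointed Gromov--Hausdorff--Prokhorov topology, for each $u\in\theta$ and $n$ large enough one obtains a correspondence $R_n^u$ between $\cD_n(u)$ and $\cD_\infty(u)$ whose distortion is $o(1)$ and which matches the root and the first $\deg_\theta^+(u)$ distinguished points up to $o(1)$. Assembling these block correspondences along $\theta$ by identifying $\rho_{ui}$ with $x_{ui}$ on both sides produces a correspondence between $\sG_\theta(\cD_n)$ and $\sG_\theta(\cD_\infty)$ whose global distortion is controlled, via the piecewise definition of the gluing distance, by the sum of the block distortions along paths in $\theta$, hence by $|\theta|\cdot o(1)$. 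Combined with the truncation bound and the triangle inequality, this establishes \ref{growing:it:convergence gh}.

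For \ref{growing:it:convergence ghp}, the same truncation and correspondence machinery transports measures correctly provided one uses Lemma~\ref{growing:characterisation convergence measures}. The hypothesis that $\nu_\infty$ charges only $\partial\bU$ forces $\nu_n(\{u\})\to 0$ for every fixed $u\in\bU$, while $\nu_n(T(u))\to \nu_\infty(T(u))$ for every $u$. One then constructs a coupling between the pushforwards of $\bnu_n$ and of $\nu_\infty$ on a common metric space realising the Gromov--Hausdorff approximation above, by: (a) transporting, for each $u\in\theta$, the vanishing interior mass $\nu_n(\{u\})$ onto an arbitrary point of the $u$-th block; (b) for every leaf $u$ of $\theta$ in $\bU\setminus\theta$ with $u$ child of some vertex of $\theta$, collapsing the whole deep block $\sG(\cD_n^{T(u)})$ onto the single attaching point, which displaces mass by at most $\varepsilon$ since those subtrees have diameter at most $\varepsilon$; (c) matching the masses $\nu_n(T(u))$ and $\nu_\infty(T(u))$, both of which are carried inside the ball of radius $\varepsilon$ around the attaching point on their respective sides. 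The resulting Prokhorov distance is $O(\varepsilon)$.

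The main obstacle will be the bookkeeping in this last step: verifying that the finitely many block correspondences inside $\theta$, the collapsing of deep subtrees to their attaching points, and the mass-matching across the boundary of $\theta$ can be combined into a single coupling whose transport cost is bounded linearly in $\varepsilon$, uniformly in $n$. This is essentially a careful application of the triangle inequality for the Prokhorov distance, but the delicate point is to perform the approximation uniformly in $n$, which is precisely what the non-explosivity hypothesis on $\ell$ and the convergence $\nu_n\to \nu_\infty$ on the compactified Ulam tree $\overline\bU$ are designed to provide.
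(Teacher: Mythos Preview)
Your proposal is correct and follows essentially the same strategy as the paper: truncate to a finite plane tree $\theta$ using non-explosivity, prove finite-dimensional continuity of the gluing (the paper isolates this as Lemma~\ref{growing:lem:continuité finie dimensionnelle}), and combine via the triangle inequality; for part~\ref{growing:it:convergence ghp} you, like the paper, pass to the boundary of $\theta$ (the paper calls this enlarged tree $\theta'$), collapse deep subtrees to their attaching points, and couple the masses $\nu_n(T(u))$ with $\nu_\infty(T(u))$ using weak convergence in $\overline\bU$. The only stylistic difference is that the paper works with explicit embeddings in the ambient space $\ell^1(\cU,\bU,\ast)$ and isometries of the Urysohn space, whereas you phrase everything in terms of correspondences and couplings; these are equivalent formulations of the same argument.
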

The first point of this theorem states that the convergence of a global structure defined as $\sG(\cD_n)$, for some sequence $\cD_n$ of decorations, can be obtained by proving the convergence of every $\cD(u)$, for all $u\in \bU$ (convergence of finite dimensional marginals) with the additional assumption that they satisfy some relative compactness property which is here expressed as the non-explosion condition. 
The second point ensures that if we add measures on our decorations and that those measures converge nicely, then we can improve our convergence to Gromov-Hausdorff-Prokhorov topology on measured metric spaces. We only treat the case where the measure gets "pushed to the leaves" because only this case arises in our applications. A more general statement where $\nu$ is not carried on $\partial\bU$ could be proven under the appropriate assumptions. 

\subsection{Some formal topological arguments}
The aim of this section is to justify and define properly the construction described in the preceding section, in a way that can be adapted to random decorations without any measurability issue. This section is rather technical and can be skipped at first reading.
We begin by recalling some topological facts about the Urysohn universal space, and the so-called Hausdorff/Gromov-Hausdorff/Gromov-Hausdorff-Prokhorov topologies.

\subsubsection{Urysohn space and Gromov-Hausdorff-Prokhorov topology}\label{growing:subsubsec:ghp topology}

\paragraph{Urysohn universal space.}
Let us consider $(\cU,\delta)$ the Urysohn space, and fix a point $\ast \in \cU$. The space $\cU$ is defined as the only Polish metric space (up to isometry) which has the following extension property (see \cite{husek_urysohn_2008} for constructions and basic properties of $\cU$): given any finite metric space $X$, and any point $x\in X$, any isometry from $X\setminus\{x\}$ to $\cU$ can be extended to an isometry from $X$ to $\cU$.
This property ensures in particular that any separable metric space can be isometrically embedded into $\cU$. In what follows we will use the fact that if $(K,\dist,\rho)$ is a rooted compact metric space, there exists an isometric embedding of $K$ into $U$ such that $\rho$ is mapped to $\ast$. It also has a very useful property called compact homogeneity (see \cite[Corollary 1.2]{melleray_geometry_2007}), which ensures that any isometry $\varphi$ between two compact subsets $K$ and $L$ of $\cU$ can be extended to the whole space $\cU$, meaning that there exists a global isometry $\phi$ such that $\varphi$ is just the restriction $\varphi=\phi_{\vert_K}$.  

\paragraph{Hausdorff distance, Lévy-Prokhorov distance.}
For any two compact subsets $A$ and $B$ of the same metric space $(E,d)$, we can define their Hausdorff distance as 
\begin{align*}
\dist_{\mathrm{H}}^E(A,B)=\inf\enstq{\epsilon>0}{A\subset B^{(\epsilon)}, \quad B \subset A^{(\epsilon)}},
\end{align*}
where $A^{(\epsilon)}$ and $B^{(\epsilon)}$ are the $\epsilon$-fattening of the corresponding sets. We denote $\cP(E)$ the set of Borel probability measures on $E$. For any two probability measures $\mu,\nu \in \cP(E)$, we can define their Lévy-Prokhorov distance as
\begin{align*}
\dist_{\mathrm{LP}}^E(\mu, \nu)=\inf \enstq{\epsilon>0}{\forall F \in \mathcal{B}(E), \mu(F)\leq \nu(F^{(\epsilon)})+\epsilon \text{ and } \nu(F)\leq \mu(F^{(\epsilon)})+\epsilon}.
\end{align*}
Whenever the space $E$ is the Urysohn space, we drop the index $E$ in the notation for those distances.
\paragraph{Infinitely pointed Gromov-Hausdorff topology.}
We write $\M^{k\bullet}$ for the space of all equivalence classes of
$(k+1)$-pointed measure metric spaces. We can define the 
Gromov-Hausdorff distance on $\M^{k\bullet}$ by
\begin{align*}
&{\mathrm{d_{GH}}}^{(k)}((X,d,\rho_0,(\rho_1,\ldots,\rho_k)),(X',d',\rho_0,(\rho'_1,\ldots,\rho'_k)))\\
&= \inf_{\phi:X\to \cU,\phi':X'\to \cU} \Big\{ d_\mathrm{H}(\phi(X),\phi'(X)) \vee \max_{0\leq i\leq k} \delta(\phi(\rho_i),\phi'(\rho'_i))\Big\},
\end{align*}
where, as previously, the infimum is over all isometric embeddings $\phi$ and $\phi'$ of $X$ and $X'$ into the Urysohn space $\cU$. %
We write $\M^{\infty\bullet}$ for the space of all (equivalence classes of)
$\infty$-pointed measured metric spaces. We can define the infinitely pointed
Gromov-Hausdorff distance on $\M^{\infty\bullet}$ by
\begin{align*}
&{\mathrm{d_{GH}}}^{(\infty)}((X,d,\rho_0,(\rho_i)_{i\geq 1}),(X',d',\rho_0,(\rho'_i)_{i\geq 1}))\\
&= \sum_{k=1}^{\infty}\frac{1}{2^k}{\mathrm{d_{GH}}}^{(k)}((X,d,\rho_0,(\rho_1,\ldots,\rho_k)),(X',d',\rho_0,(\rho'_1,\ldots,\rho'_k)))\\
(&\leq (\diam X + \diam X')<\infty)
\end{align*}
By abuse of notation, we will also consider (equivalence classes of) finitely pointed compact metric spaces $(X,\dist,\rho,(x_i)_{1\leq i\leq k})$ as elements of $\M^{\infty\bullet}$, by arbitrarily extending the sequence $(x_i)$ with $x_i=\rho$ for all $i\geq k+1$.
\paragraph{Infinitely pointed Gromov-Hausdorff-Prokhorov topology.}
In some of our applications, we work on $\K^{\infty \bullet}$, which is the corresponding space for elements of $\M^{\infty\bullet}$ endowed with a Borel probability measure. In the same way as before, elements of $\K^{\infty \bullet}$ are $5$-tuples $(X,\dist,\rho,(x_i)_{i\geq 1},\mu)$, where $(X,\dist,\rho,(x_i)_{i\geq 1})\in \K^{\infty \bullet}$ and $\mu$ is a finite Borel measure on $X$. Again we set 
\begin{align*}
&{\mathrm{d_{GHP}}}^{(k)}((X,d,\rho_0,(\rho_1,\ldots,\rho_k),\mu),(X',d',\rho_0,(\rho'_1,\ldots,\rho'_k),\mu'))\\
&= \inf_{\phi:X\to \cU,\phi':X'\to \cU} \Big\{ d_\mathrm{H}(\phi(X),\phi'(X)) \vee d_\mathrm{LP}((\phi)_*\mu,(\phi')_*\mu') \vee \max_{0\leq i\leq k} d(\phi(\rho_i),\phi'(\rho'_i))\Big\},
\end{align*}
and
\begin{align*}
&{\mathrm{d_{GHP}}}^{(\infty)}((X,d,\rho_0,(\rho_i)_{i\geq 1},\mu),(X',d',\rho_0,(\rho'_i)_{i\geq 1},\mu'))\\
&= \sum_{k=1}^{\infty}\frac{1}{2^k}{\mathrm{d_{GHP}}}^{(k)}((X,d,\rho_0,(\rho_1,\ldots,\rho_k),\mu),(X',d',\rho_0,(\rho'_1,\ldots,\rho'_k),\mu')).
\end{align*}

\subsubsection{Construction in the appropriate ambient space}
In order to ease the definition of our objects and avoid some measurability issues that may arise when working with abstract equivalence classes of metric spaces, we define a way of only dealing with some particular representatives of those equivalence classes that are compact subsets of the set $\cU$. 
For that matter we define $\K^{\infty\bullet}(\cU)$ the counterpart of $\K^{\infty\bullet}$,
\[\K^{\infty \bullet}(\cU):=\enstq{(K,\delta_{\vert_K},\ast,(\rho_i)_{i\geq 1},\mu)}{\ast\in K\subset \cU, \ K \text{ compact}, \forall i\geq 1, \rho_i\in K, \ \mu\in \cP(\cU), \ \supp(\mu)\subset K},\]
where $\delta_{\vert_K}$ is the distance on $\cU$ restricted to the subset $K$. We set accordingly, 
\begin{align*}
{\mathrm{d_{HP}}}^{(k)}((K,(\rho_1,\ldots,\rho_k),\mu),(K',(\rho'_1,\ldots,\rho'_k),\mu'))
&=d_\mathrm{H}(K,K') \vee d_\mathrm{LP}(\mu,\mu') \vee \max_{1\leq i\leq k} d(\rho_i,\rho'_i),
\end{align*}
and
\begin{align*}
&{\mathrm{d_{HP}}}^{(\infty)}(K,\delta_{\vert_K},\ast,(\rho_i)_{i\geq 1},\mu),(K',\delta_{\vert_{K'}},\ast,(\rho_i')_{i\geq 1},\mu'))\\
&= \sum_{k=1}^{\infty}\frac{1}{2^k}{\mathrm{d_{HP}}}^{(k)}((K,(\rho_1,\ldots,\rho_k),\mu),(K',(\rho'_1,\ldots,\rho'_k),\mu')).
\end{align*}
We define the projection map $\pi:\K^{\infty \bullet}(\cU)\longrightarrow\K^{\infty \bullet}$, such that 
\begin{align*}
\pi((K,\delta_{\vert_K},\ast,(\rho_i)_{i\geq 1},\mu))= \left[(K,\delta_{\vert_K},\ast,(\rho_i)_{i\geq 1},\mu)\right],
\end{align*}
the corresponding equivalence class in $\K^{\infty \bullet}$. This map is surjective by the properties of Urysohn space and continuous because it is obviously $1$-Lipschitz. Using the surjectivity, we know that we can lift any deterministic element of $\K^{\infty \bullet}$ to an element $\K^{\infty\bullet}(\cU)$. %

Actually, we are going to deal with random variables with values in the space $\K^{\infty\bullet}$ and we want to ensure that we can consider versions of those random variables with values in $\K^{\infty\bullet}(\cU)$. In fact, noticing that both sets are Polish spaces, we can use a theorem of measure theory from \cite{lubin_extensions_1974} which ensures that every probability distribution $\tau$ on $\K^{\infty \bullet}$ can be lifted to a probability measure $\sigma$ on $\K^{\infty \bullet}(\cU)$, such that its corresponding push-forward measure by the projection $\pi_*\sigma$ is equal to the probability $\tau$. Hence, whenever we consider a random variable with values in $\K^{\infty \bullet}$, we can always work with a version of our random variable that is embedded in the space $\cU$, and whose root coincides with $\ast$. The same line of reasoning can be made with $\M^{\infty \bullet}$.

From now on, we work with decorations $\cD\in \left(\K^{\infty \bullet}(\cU)\right)^\bU$ by taking a representative for every one of the blocks of the decoration. 

\paragraph{Construction embedded in a space.}
We introduce the following space, in which we will be able to define a representative of the space $\sG(\cD)$ for any decoration $\cD$. 
\[\ell^1(\cU,\bU,\ast):=\enstq{(y_u)_{u\in\bU}\in \cU^{\bU}}{\sum_{u\in\bU} \delta(y_u,\ast)<+\infty }.\]
We endow $\ell^1(\cU,\bU,\ast)$ with the distance $\dist((y_u)_{u\in\bU},(z_u)_{u\in\bU})=\sum_{u\in\bU} \delta(y_u,z_u)$,
which makes it a Polish space. 
\begin{remark}\label{growing:rem:isometries of l1}
	If, for each $u\in\bU$, we are given an isometry $\phi_u:\cU\rightarrow\cU$ such that $ \phi_u(\ast)=\ast$, then we can introduce
	\begin{align*}
	\phi:= \prod_{u\in\bU}\phi_u: \ &\ell^1(\cU,\bU,\ast)\rightarrow \ell^1(\cU,\bU,\ast)\\
	&(y_u)_{u\in\bU}\mapsto \left(\phi_u(y_u)\right)_{u\in\bU}
	\end{align*}
	and $\phi$ is an isometry of the space $\ell^1(\cU,\bU,\ast)$.
\end{remark}
For each $u\in\bU$, we consider a representative of the block $\left(D_u,d_u,\rho_u,(x_{ui})_{i\geq 1}\right)$ that belongs to $\M^{\infty \bullet}(\cU)$, meaning that we see $D_u$ as a subset of $\cU$ and,
\begin{align*}
\left(D_u,d_u,\rho_u,(x_{ui})_{i\geq 1}\right)=\left(D_u,\delta_{\vert_{D_u}},\ast,(x_{ui})_{i\geq 1}\right)
\end{align*}
Then the gluing operation is defined in the following way. Let $\bi=i_1i_2...i_n\in \bU$. For any such $\bi\in\bU$, we define,
\begin{equation*}
\tilde{D_\mathbf{i}}=\enstq{(y_{u})_{u\in\bU}}{y_{\emptyset}=x_{\bi_1}, y_{\bi_1}=x_{\bi_2},\dots, y_{\bi_{n-1}}=x_{\bi_n}, y_{\mathbf{i}}\in D_{\bi}, \text{ and } \forall u\npreceq \bi,  y_{u}=\ast}
\end{equation*} 
Remark that each of the subsets $\tilde{D_\mathbf{i}}$ is isometric to the corresponding bloc $D_\mathbf{i}$.
Then we consider
\begin{equation}
\sG^*(\cD)=\bigcup_{\bi\in \bU}\tilde{D_\mathbf{i}}
\end{equation}
The structure $\sG(\cD)$ is then defined as the closure of $\sG^*(\cD)$ in the space $\ell^1(\cU,\bU,\ast)$. Thanks to Remark~\ref{growing:rem:isometries of l1}, the resulting space (up to isometry) does not depend on the choice of representative for the different decorations. 

For convenience, for any plane tree $\theta$ we also introduce $\sG(\theta,\cD)$ the metric space obtained by only gluing the decorations that are indexed by the vertices in $\theta$, i.e.
\begin{equation}
\sG(\theta,\cD):= \bigcup_{\bi\in \theta} \tilde{D}_\bi
\end{equation} We do not need to complete it since it is already compact, as a union of a finite number of compact metric spaces.

\paragraph{Identification of the leaves.}
Suppose that $\cD$ is such that $\sG(\cD)$ is compact. Then, in this setting, the map $\iota_{\cD}:\partial\bU\rightarrow \sG(\cD)$ defined in \eqref{growing:eq:def de iota} has the following form: for any $\mathbf{i}=i_1i_2\dots \in\partial\bU$, 
\begin{align*}
\iota_{\cD}(\mathbf{i})= (y_u)_{u\in\bU} \quad \text{with } y_{\bi_n}&=x_{\bi_{n+1}} \quad \text{ for all } n\geq 0,\\
y_u&=\ast \quad \text{whenever} \quad u\nprec\bi.
\end{align*}
\subsection{Proof of Theorem \ref{growing:thm:continuité du recollement}}
Before proving the theorem, let us state a lemma that ensures that the gluing operation is continuous when considering a finite number of decorations. 
\begin{lemma}\label{growing:lem:continuité finie dimensionnelle}
	For any $\theta$ finite plane tree, and $\cD$ and $\cD'$ decorations, we have
	\[
	\mathrm{d_{GH}}\left(\sG\left(\theta,\cD\right),\sG\left(\theta,\cD'\right)\right)\leq 2\cdot \sum_{u\in\theta} \mathrm{d_{GH}}^{(\deg^+_\theta(u))}\left(\cD(u),\cD'(u)\right).
	\]
\end{lemma}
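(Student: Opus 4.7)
The plan is to bound the Gromov--Hausdorff distance between the two gluings by constructing an explicit correspondence obtained by assembling, block by block, correspondences between the decorations $\cD(u)$ and $\cD'(u)$, and then reading off the distortion from the explicit distance formula that defines $\sG$. This is the natural strategy for comparing metric gluings along a common combinatorial structure.

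First, fix $\eta>0$ and set $\epsilon_u:=\mathrm{d_{GH}}^{(\deg^+_\theta(u))}(\cD(u),\cD'(u))$ for every $u\in\theta$. By the definition of the pointed Gromov--Hausdorff distance via isometric embeddings into $\cU$, together with the routine passage from embeddings to correspondences (for each $y\in D_u$ pick a partner $y'\in D'_u$ whose embedded image lies within $\epsilon_u+\eta/2$ in $\cU$, and symmetrically), one can produce for every $u\in\theta$ a correspondence $R_u\subset D_u\times D'_u$ containing the pairs $(\rho_u,\rho'_u)$ and $(x_{ui},x'_{ui})$ for $1\leq i\leq \deg^+_\theta(u)$, and whose distortion
\[
\mathrm{dis}(R_u):=\sup_{(y,y'),(z,z')\in R_u}\big|d_u(y,z)-d'_u(y',z')\big|
\]
is at most $2\epsilon_u+\eta$.

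Next I would define $R\subset \sG(\theta,\cD)\times \sG(\theta,\cD')$ by gluing the $R_u$'s together after the identifications dictated by $\sG$. This is well posed: whenever the gluing identifies $\rho_{ui}\sim x_{ui}$ on the left and $\rho'_{ui}\sim x'_{ui}$ on the right, both $R_u$ and $R_{ui}$ relate precisely this identified pair, so the relation descends to the quotients. Since every point of $\sG(\theta,\cD)$ lies in some block $D_v$ with $v\in\theta$ (and similarly on the primed side), $R$ is a correspondence.

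The final step is to bound $\mathrm{dis}(R)$. Pick representatives $y\in D_\bi$, $z\in D_\bj$ with partners $y'\in D'_\bi$, $z'\in D'_\bj$. The distance formulas displayed just above the lemma express $\dist(y,z)$ and $\dist'(y',z')$ as sums of the form $\sum_v d_v(p_v,q_v)$ and $\sum_v d'_v(p'_v,q'_v)$, where $v$ runs over the vertices on the tree path in $\theta$ from $\bi$ to $\bj$, and by construction each pair $((p_v,p'_v),(q_v,q'_v))$ lies in $R_v\times R_v$. Applying the distortion bound term by term yields
\[
\big|\dist(y,z)-\dist'(y',z')\big|\leq \sum_{v\in\theta}(2\epsilon_v+\eta),
\]
so $\mathrm{d_{GH}}(\sG(\theta,\cD),\sG(\theta,\cD'))\leq \tfrac{1}{2}\mathrm{dis}(R)\leq \sum_{v\in\theta}\epsilon_v+\tfrac{|\theta|}{2}\eta$, and letting $\eta\to 0$ gives even a slightly sharper bound than claimed. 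The one mildly delicate point, and hence the main obstacle, is checking that the family $(R_u)$ glues consistently across the identifications imposed by $\sG$; but this is automatic since at every identified pair the two correspondences that meet there both record exactly that pair.
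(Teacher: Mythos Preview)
Your proof is correct and takes a genuinely different route from the paper's. The paper works inside the explicit ambient space $\ell^1(\cU,\bU,\ast)$: for each $u\in\theta$ it uses the compact homogeneity of the Urysohn space to find a global isometry $\phi_u:\cU\to\cU$ fixing $\ast$ that nearly superposes $D'_u$ onto $D_u$ together with the first $\deg^+_\theta(u)$ distinguished points, then takes the product isometry $\phi=\prod_u\phi_u$ and bounds the Hausdorff distance $\mathrm{d_H}(\sG(\theta,\cD),\phi(\sG(\theta,\cD')))$ block by block. Your correspondence argument is more elementary---it avoids the Urysohn machinery and the $\ell^1$ embedding entirely---and even yields the sharper constant $\sum_{u\in\theta}\epsilon_u$ rather than $2\sum_{u\in\theta}\epsilon_u$. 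The trade-off is that the paper's approach produces an explicit isometry $\phi$ of the ambient space together with the intermediate block-by-block Hausdorff bound $\mathrm{d_H}(\tilde D_{\bi},\phi(\tilde D'_{\bi}))\leq 2\sum_u\epsilon_u$, and this is reused verbatim later in the proof of Theorem~\ref{growing:thm:continuité du recollement}\ref{growing:it:convergence ghp} to control the L\'evy--Prokhorov distance; your correspondence $R$ does not immediately hand back that ingredient.
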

\begin{proof}
	For all $u\in\theta$, and thanks to the compact homogeneity of $\cU$, we can find an isometry $\phi_u:\cU\rightarrow\cU$ such that $\phi_u(\ast)=\ast$ and 
	\begin{equation}\label{growing:eq:phiu isometry}
	\mathrm{d_H}(\phi_u\left(D'_u)\right),D_u)\vee \max_{1\leq i\leq \deg^+_\theta(u)} \delta(\phi_u(x_{ui}'),x_{ui})\leq 2 \mathrm{d_{GH}}^{(\deg^+_\theta(u))}\left(\cD(u),\cD'(u)\right).
	\end{equation}
	Then let $\phi_u=\mathrm{id}_\cU,$ for every $u\notin \theta$, and let $\phi = \prod_{u\in\bU}\phi_u$ be the corresponding isometry of $\ell^1(\cU,\bU,\ast)$. Then let us show that we control the Hausdorff distance between
	\begin{align*}
	\sG(\theta,\cD)&=\bigcup_{\bi\in \theta}\enstq{(y_{u})_{u\in\bU}}{y_{\emptyset}=x_{\bi_1},\  y_{\bi_1}=x_{\bi_2}, \ \dots,\ y_{\bi_{n-2}}=x_{\bi_{n-1}},\ y_{\bi_{n-1}}=x_{\bi}, \text{ and } \forall u\npreceq \bi,  y_{u}=\ast},\\
	&=\bigcup_{\bi\in \theta} \tilde{D_\bi},
	\end{align*}
	and 
	\begin{align*}
	\phi(\sG(\theta,\cD'))&=\bigcup_{\bi\in \theta}\enstq{(y_{u})_{u\in\bU}}{y_{\emptyset}=\phi_{\emptyset}(x_{\bi_1}'),\dots, y_{\bi_{n-1}}=\phi_{\bi_{n-1}}(x_{\bi}'), y_{\mathbf{i}}\in \phi_{\bi}(D_{\bi}'), \text{ and } \forall u\nprec \bi,  y_{u}=\ast}\\
	&=\bigcup_{\bi\in \theta} \phi\left(\tilde{D_\bi'}\right)
	\end{align*}
	Now for any $\bi=i_1i_2\dots i_n\in\theta$, any $y=(y_u)_{u\in\bU}\in \tilde{D}_\bi$ and $z=(z_u)_{u\in\bU}\in \phi\left(\tilde{D_\bi'}\right)$, we can write
	\begin{align*}
	\dist(y,z)&=\delta(y_\bi,z_\bi)+\sum_{\ell=1}^{n} \delta(x_{\bi_\ell},\phi_{\bi_{\ell-1}}(x_{\bi_{\ell}}') ),
	\end{align*}
	with $y_\bi\in D_\bi$ and $z_\bi\in \phi(D_\bi')$.
	Now using equation \eqref{growing:eq:phiu isometry}, we get that 
	\begin{align}\label{growing:eq:distance de hausdorff bloc par bloc}
	\mathrm{d_H}\left(\tilde{D}_\bi,\phi\left(\tilde{D_\bi'}\right)\right)&\leq \mathrm{d_H}(D_\bi,\phi_\bi(D_\bi'))+\sum_{\ell=1}^{n} \delta(x_{\bi_\ell},\phi_{\bi_{\ell-1}}(x_{\bi_{\ell}}') ) \notag \\
	&\leq 2\cdot \sum_{u\in\theta} \mathrm{d_{GH}}^{(\deg^+_\theta(u))}\left(\cD(u),\cD'(u)\right).
	\end{align}
	The last inequality is true for any $\bi\in\theta$, hence taking a union yields,
	\begin{align*}
	\mathrm{d_H}\left(\bigcup_{\bi\in\theta}\tilde{D}_\bi,\bigcup_{\bi\in\theta}\phi\left(\tilde{D_\bi'}\right)\right)=\mathrm{d_H}\left(\sG(\theta,\cD),\phi(\sG(\theta,\cD'))\right)\leq 2\cdot \sum_{u\in\theta} \mathrm{d_{GH}}^{(\deg^+_\theta(u))}\left(\cD(u),\cD'(u)\right),
	\end{align*}
	which finishes to prove the lemma.
\end{proof}

\begin{proof}[Proof of Theorem \ref{growing:thm:continuité du recollement}]
	Let $n\geq 1$ be an integer and $\theta$ a finite plane tree and $y=(y_u)_{u\in\bU}\in \sG(\cD_n)$. From our construction of $\sG(\cD_n)$, we know that the indices $v$ for which $y_v\neq \ast$ are all contained in an infinite ray in $\bU$, meaning that there exists $u\in \partial\bU$ such that for all $v\nprec u, y_v=\ast$. Now we can check that
	\begin{align*}
	\dist(y,\sG(\theta,\cD_n))=\underset{v\notin\theta}{\sum_{v\prec u}}\delta(y_v,\ast)
	&\leq \underset{v\notin\theta}{\sum_{v\prec u}}\sup_{n\geq 1}\diam(\cD_n(v)),\\
	&\leq \sup_{u\in\bU}\underset{v\notin\theta}{\sum_{v\prec u}}\sup_{n\geq 1}\diam(\cD_n(v)).
	\end{align*}
	Since it holds for any $y\in \sG(\cD_n)$ and the bound on the right-hand side is uniform for all such $y$, we have
	\begin{equation*}
	\mathrm{d_H}\left(\sG(\theta,\cD_n),\sG(\cD_n)\right)	\leq \sup_{u\in\bU}\underset{v\notin\theta}{\sum_{v\prec u}}\sup_{n\geq 1}\diam(\cD_n(v)).
	\end{equation*}
	Now we can write
	\begin{align*}
	\mathrm{d_{GH}}\left(\sG(\cD_n),\sG(\cD)\right)&\leq \mathrm{d_{GH}}\left(\sG(\cD_n),\sG(\theta,\cD_n)\right)+\mathrm{d_{GH}}\left(\sG(\theta,\cD_n),\sG(\theta,\cD)\right)+\mathrm{d_{GH}}\left(\sG(\theta,\cD),\sG(\cD)\right)\\
	&\leq 2\sup_{u\in\bU}\underset{v\notin\theta}{\sum_{v\prec u}}\sup_{n\geq 1}\diam(\cD_n(v))+\mathrm{d_{GH}}\left(\sG(\theta,\cD_n),\sG(\theta,\cD)\right)
	\end{align*}
	Now using the non-explosion of the function $(u\mapsto \sup_{n\geq 1}\diam(\cD_n(u)))$ we can make the first term as small as we want by taking the appropriate $\theta$, and when $\theta$ is fixed, the second term vanishes as $n\rightarrow\infty$ thanks to Lemma \ref{growing:lem:continuité finie dimensionnelle}. This finishes the proof of \ref{growing:it:convergence gh}.
	
	Now let us prove point \ref{growing:it:convergence ghp}. For simplicity, we write $\mu_n=\sum_{u\in\bU}\nu_{u,n}$ and also $\mu_\infty=(\iota_{\cD_\infty})_*\nu_\infty$. Let $\epsilon>0$. From the non-explosion condition we know that we can find a plane tree $\theta$ such that 
	\begin{equation}\label{growing:eq:relative compacity appliqué dans la preuve}\sup_{u\in\bU}\left( \underset{v\notin \theta}{\sum_{v\prec u}} \sup_{n\geq 1} \diam(\cD_n(v))\right)<\epsilon.
	\end{equation}
	Now, we construct another finite plane tree $\theta'$, such that $\theta\subset\theta'$, by adding only children of vertices of $\theta$. We do so in such a way that:
	\begin{equation*}
	\sum_{v\in\theta'\setminus \theta}\nu_\infty\left(T(v)\right)\geq 1-\epsilon/2,
	\end{equation*}
	as it is always possible since, if we construct $\theta'$ by adding every children of every vertices of $\theta$, the last sum would be $1$. Remark that from \eqref{growing:eq:relative compacity appliqué dans la preuve}, for any $v\in\theta'\setminus\theta$ and any $n\geq 1$, we have $\diam\left(\cD_n(v)\right)<\epsilon$.
	
	Introduce the projection $p_{\theta'}:\ell^1(\cU,\bU,\ast)\rightarrow \ell^1(\cU,\bU,\ast)$, such that for any $(y_u)_{u\in\bU}$, the image $(z_u)_{u\in\bU}=p_{\theta'}\left((y_u)_{u\in\bU}\right)$ is such that $z_u=y_u$ for any $u \in \theta'$ and $z_u=\ast$ otherwise.
	Using \eqref{growing:eq:relative compacity appliqué dans la preuve}, we can check that for any $n\geq 1$ and for any $y\in\sG(\cD_n)$, we have
	\begin{equation*}
	\dist\left(p_{\theta'}(y),y\right)<\epsilon.
	\end{equation*}
	This observation suffices to show that for any $n\in \N\cup\{\infty\}$,
	\begin{equation*}
	\mathrm{d_{GHP}}\left(\left(\sG(\cD_n),\mu_n\right), \left(p_{\theta'}\left(\sG(\cD_n)\right),(p_{\theta'})_*\mu_n\right)\right)<\epsilon.
	\end{equation*}
	Then,
	\begin{align*}
	\mathrm{d_{GHP}}\left(\left(\sG(\cD_n),\mu_n\right),\left(\sG(\cD_\infty),\mu_\infty\right)\right)&\leq 
	\mathrm{d_{GHP}}\left(\left(\sG(\cD_n),\mu_n\right), \left(p_{\theta'}\left(\sG(\cD_n)\right),(p_{\theta'})_*\mu_n\right)\right)\\
	&+ \mathrm{d_{GHP}}\left(\left(\sG(\cD_\infty),\mu_\infty\right), \left(p_{\theta'}\left(\sG(\cD_\infty)\right),(p_{\theta'})_*\mu_\infty\right)\right)\\
	&+
	\mathrm{d_{GHP}}\left(\left(p_{\theta'}\left(\sG(\cD_n)\right),(p_{\theta'})_*\mu_n\right),\left(p_{\theta'}\left(\sG(\cD_\infty)\right),(p_{\theta'})_*\mu_\infty\right)\right).
	\end{align*}
	The first two terms of the right-hand side are smaller than $\epsilon$ from what precedes, we only have to prove that the last one is also small whenever $n$ is large enough. Remark that for any $\cD$, $p_{\theta'}\left(\sG(\cD)\right)=\sG(\theta',\cD)$. Let us fix $n\geq 1$ large enough such that 
	\begin{align*}
	2\cdot \sum_{u\in\theta'} \mathrm{d_{GH}}^{(\deg_{\theta'}(u ))}\left(\cD_n(u),\cD_\infty(u)\right)<\epsilon,
	\end{align*}
	and
	\begin{align}\label{growing:eq:couplage mun mu}
	\sum_{v\in\theta'\setminus \theta} \abs{\nu_n(T(v))-\nu_\infty(T(v))}<\epsilon.
	\end{align}
	From \eqref{growing:eq:distance de hausdorff bloc par bloc} in the proof of Lemma \ref{growing:lem:continuité finie dimensionnelle}, we can find an isometry $\phi$ such that for all $\bi\in \theta'$,
	\begin{align}\label{growing:eq:distance de hausdorff blocks correspondants}
	\mathrm{d_H}\left(\tilde{D}_{\infty,\bi},\phi\left(\tilde{D}_{n,\bi}\right)\right)<\epsilon. 
	\end{align}
	Now because of \eqref{growing:eq:couplage mun mu}, we know that we can find a coupling $(X_n,X_\infty)$ of random variables with values in $\overline{\bU}$ having distribution $\nu_n$ and $\nu_\infty$, such that with probability $>1-\epsilon$, they both fall in the same $T(v)$ for $v\in\theta'\setminus\theta$. From this coupling, we can construct another one between $(Y_n,Y_\infty)$ of random variables on respectively $\sG(\theta',\cD_n)$ and $\sG(\theta',\cD_\infty)$ such that one has distribution $(p_{\theta'})_*\mu_n$ and the other $(p_{\theta'})_*\mu_\infty$ and such that the probability that there exists $v\in\theta'\setminus\theta$ such that $Y_n\in \tilde{D}_{n,v}$ and $Y_\infty\in \tilde{D}_{\infty,v}$ is greater than $1-\epsilon$. 
	Using this plus \eqref{growing:eq:distance de hausdorff blocks correspondants} shows that the couple $(Y_n,\phi(Y_\infty))$ is at distance at most $\epsilon$ with probability at least $1-\epsilon$. This shows that the Lévy-Prokhorov distance between $(p_{\theta'})_*\mu_\infty$ and $\phi_*((p_{\theta'})_*\mu_n)$ is smaller than $\epsilon$. In this end, we just showed that
	\begin{align*}
	\mathrm{d_{GHP}}\left(\left(p_{\theta'}\left(\sG(\cD_n)\right),(p_{\theta'})_*\mu_n\right),\left(p_{\theta'}\left(\sG(\cD_\infty)\right),(p_{\theta'})_*\mu_\infty\right)\right)<\epsilon,
	\end{align*}
	which finishes the proof of the proposition.
\end{proof}

\subsection{Sufficient condition for non-explosion}
Let us finish this section by proving a result that ensures non-explosion for some type of real-valued decorations. 
Let $(x_n)_{n\geq 1}$ be a sequence of non-negative real numbers. We define a real-valued decoration on the Ulam tree $\ell:\bU\rightarrow \R_+$ using this sequence and a sequence $(u_n)_{n\geq 1}$ of distinct elements of $\bU$ as
\begin{align*}
\ell(u_k)&=x_k \quad \text{for all $k\geq 1$},\\
\ell(u)&=0  \quad \text{for any $u\notin\enstq{u_k}{k\geq 1}$}
\end{align*}
The following lemma ensures the non-explosion of $\ell$ under some assumptions that are often met in our cases of application. 
\begin{lemma}\label{growing:lem:sufficient condition for non-explosion}
	If there exists constants $\epsilon>0$ and $K>0$ such that for all $n\geq 1$
	\begin{equation*}
	x_n\leq (n+1)^{-\epsilon+\petito{1}}\quad \text{as $n\rightarrow\infty$} \qquad \text{and} \qquad \haut(u_n)\leq K\cdot \log n,
	\end{equation*}
	then the function $\ell$ defined above is non-explosive.
\end{lemma}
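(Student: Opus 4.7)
The plan is to exhibit, for each integer $N \geq 1$, a finite plane tree $\theta_N$ containing the vertices $\{u_1, \ldots, u_N\}$ and to show that
\begin{equation*}
\sup_{u \in \bU}\, \sum_{v \preceq u,\, v \notin \theta_N} \ell(v) \underset{N\to\infty}{\longrightarrow} 0.
\end{equation*}
I would take $\theta_N$ to be the smallest plane tree containing $\{u_1,\ldots,u_N\}$: close this finite set under the prefix order and then fill in the missing "younger siblings" required by the definition of a plane tree. Each $u_k$ contributes only finitely many elements to this closure, so $\theta_N$ is finite. Since $\ell$ vanishes outside $\{u_k\}_{k\geq 1}$, the only terms surviving the exclusion of $\theta_N$ are those indexed by $k > N$, so
\begin{equation*}
\sum_{v \preceq u,\, v \notin \theta_N} \ell(v) \;\leq\; \sum_{k > N :\, u_k \preceq u} x_k.
\end{equation*}

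The core observation is that on a single ancestral ray there is at most one vertex at each height, so I would reorganise the right-hand sum by the height $j := \haut(u_k)$: for each $j$ at most one index contributes, call it $k_j = k_j(u)$, and by the height hypothesis it satisfies $j \leq K \log k_j$, i.e.\ $k_j \geq e^{j/K}$.

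Fix $\epsilon' \in (0, \epsilon)$. The hypothesis $x_n \leq (n+1)^{-\epsilon+\petito{1}}$ provides some $n_0$ with $x_n \leq n^{-\epsilon'}$ for all $n \geq n_0$. For $N \geq n_0$, the two constraints $k_j > N$ and $k_j \geq e^{j/K}$ give $x_{k_j} \leq \max(N, e^{j/K})^{-\epsilon'}$. Splitting the sum at the threshold $j_0 := \lceil K \log N \rceil$ yields
\begin{equation*}
\sum_{k > N :\, u_k \preceq u} x_k \;\leq\; (j_0 + 1)\, N^{-\epsilon'} \;+\; \sum_{j > j_0} e^{-j\epsilon'/K} \;=\; \grandO{N^{-\epsilon'} \log N},
\end{equation*}
a bound uniform in $u$ which tends to $0$ as $N \to \infty$.

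The only slightly delicate point, and the main obstacle if approached naively, is that \emph{neither} the polynomial tail bound on $x_n$ \emph{nor} the logarithmic height bound is individually sufficient: the first alone gives no control at small heights, while the second alone yields infinitely many non-decaying terms on a typical ray. The two must be combined exactly as above, so that the per-height bound $\max(N, e^{j/K})^{-\epsilon'}$ is flat up to the threshold $j_0 = \grandO{\log N}$ and then decays geometrically, producing a summable tail. Once this is set up the rest is bookkeeping.
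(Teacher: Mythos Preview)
Your proof is correct. The approach differs from the paper's only in bookkeeping: the paper groups the indices $k$ into dyadic blocks $\{2^i < k \le 2^{i+1}\}$ and, for each block, bounds the number of ancestors of $u$ lying in that block by the maximal height $K\log 2^{i+1}$, then multiplies by the maximal $x_k$ in the block. You instead organise the sum by height $j$, use that a ray has at most one vertex per height, and invert the height hypothesis to get $k_j \ge e^{j/K}$. The two arguments are dual to one another and use exactly the same two ingredients; your version is arguably slightly more direct since it avoids the dyadic layer, while the paper's version makes the role of the height bound as a counting device more explicit. Either way the key step is the same: combining the polynomial decay of $x_n$ with the logarithmic height to produce a summable bound along each ray.
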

\begin{proof}
	Let $i\in\N$. For any $u\in \bU$ we have
	\begin{align*}
	\underset{v\in \{u_k,\ 2^i<k\leq 2^{i+1}\}}{\sum_{v\prec u}}\ell(v)&
	\leq \#\enstq{k\in \intervalleentier{2^i+1}{2^{i+1}}}{u_k\prec u} \cdot \left(\max_{2^{i}<k\leq 2^{i+1}} \ell(u_k)\right)\\
	&\leq K\cdot\log 2 \cdot (i+1)\cdot (2^i)^{-\epsilon+\petito{1}},
	\end{align*}
	where the last display is independent of $u$. Now, if we consider any sequence of plane trees $(\tau_i)_{i\geq 1}$ such that for every $i\geq 1$ the tree $\tau_i$ contains all the vertices $\{u_1,u_2,\dots,u_{2^i}\}$, then we have for any $u\in \bU$,
	\begin{align*}
	\underset{v\notin \tau_i}{\sum_{v\prec u}}\ell(v) \leq \sum_{j=i}^\infty \underset{v\in \{u_k,\ 2^i<k\leq 2^{i+1}\}}{\sum_{v\prec u}}\ell(v)&\leq \sum_{j=i}^\infty K\cdot\log 2 \cdot (j+1)\cdot (2^j)^{-\epsilon+\petito{1}}
	\end{align*}
	and the last display converges to $0$ as $i\rightarrow\infty$, which proves the lemma.
\end{proof}
\section{Preferential attachment and weighted recursive trees}\label{growing:sec:pa and wrt}
In this section we recall some results about preferential attachment trees with initial fitnesses and weighted recursive trees that are proved in the companion paper \cite{senizergues_geometry_2019}. 
\subsection{Definitions}\label{growing:subsec:definition pa wrt}
\paragraph{Weighted recursive trees (WRT).}
For any sequence of non-negative real numbers $(w_n)_{n\geq 1}$ with $w_1>0$, the distribution $\wrt((w_n)_{n\geq 1})$ of the \emph{weighted recursive tree with weights $(w_n)_{n\geq 1}$} is defined on sequences of growing plane trees.
A sequence $(\ttT_n)_{n\geq 1}$ having this distribution is constructed iteratively starting from $\ttT_1$ containing only one vertex $u_1=\emptyset\in \bU$, in the following manner: the tree $\ttT_{n+1}$ is obtained from $\ttT_n$ by adding a vertex $u_{n+1}$. The parent of this new vertex is chosen to be any of the vertices $u_k\in \ttT_n$ with probability proportional to $w_k$, and $u_{n+1}$ is added to the tree so that it is the rightmost child of its parent.
Whenever we consider a random sequence of weight $(\mathsf{w}_n)_{n\geq 1}$, the distribution $\wrt((\mathsf{w}_n)_{n\geq 1})$ denotes the law of the random tree obtained by first sampling the sequence $(\mathsf{w}_n)_{n\geq 1}$ and then, conditionally on $(\mathsf{w}_n)_{n\geq 1}$, running the above construction with this sequence of weights.

\paragraph{Preferential attachment trees (PA).}
For any sequence $\mathbf a=(a_n)_{n\geq 1}$ of real numbers, with $a_1>-1$ and $a_n\geq 0$ for $n\geq 2$, we define another distribution on growing sequences $(\ttP_n)_{n\geq 1}$ of plane trees called the \emph{affine preferential attachment tree with initial fitnesses $(a_n)_{n\geq 1}$} which is denoted $\pa((a_n)_{n\geq 1})$. 
The construction goes on as before: $\ttP_1$ contains only one vertex $u_1$ and $\ttP_{n+1}$ is obtained from $\ttP_n$ by adding a vertex $u_{n+1}$, whose parent is chosen to be any $u_k\in \ttT_n$ with probability proportional to $\deg^{+}_{\ttP_n}(u_k)+a_k$,
where $\deg^{+}_{\ttP_n}(\cdot)$ denotes the number of children in the tree $\ttP_n$. By convention if $n=1$, the second vertex $u_2$ is always defined as a child of $u_1$.

\subsection{Properties of preferential attachment and weighed recursive trees}
Let us state the properties proved in the companion paper \cite{senizergues_geometry_2019} that will be needed in our analysis. Let us suppose here that we consider a sequence $\mathbf{a}=(a_{n})$ such that
\begin{align}\label{growing:eq:assum An=cn}\tag{$H_{c,c'}$}
	A_n:=\sum_{i=1}^{n}a_i=c\cdot n + \grandO{n^{1-\epsilon}} \quad \text{and} \quad a_n\leq n^{c'+\petito{1}}
\end{align}
for some constants $c>0$, some $0\leq c'<\frac{1}{c+1}$ and some $\epsilon>0$.
\paragraph{Convergence of degrees and representation theorem.}
A first result concerns the scaling limit of the degrees of the vertices in their order of creation and the distribution of the sequence of trees conditionally on the limit sequence; it can be read from \cite[Theorem~1, Proposition~2 and Proposition~5]{senizergues_geometry_2019}. We have the following convergence in the product topology to a random sequence
\begin{align}\label{growing:eq:convergence of degrees pa}
	n^{-\frac{1}{c+1}}\cdot (\deg_{\ttP_n}^+(u_1),\deg_{\ttP_n}^+(u_2),\dots)\overset{\text{a.s.}}{\underset{n\rightarrow\infty}{\longrightarrow}}(\mathsf m^\mathbf{a}_1,\mathsf m^\mathbf{a}_2,\dots),
\end{align}
and conditionally on the sequence $(\mathsf{m}^\mathbf a_k)_{k\geq 1}$ the sequence $(\ttP_n)$ has distribution $\wrt((\mathsf{m}^\mathbf a_k)_{k\geq 1})$. 
Also, the limiting sequence $(\mathsf{m}^\mathbf a_k)_{k\geq 1}$ has the following behaviour, which depends on the parameters $c$ and $c'$ 
\begin{equation*}
\mathsf M^\mathbf a_k:=\sum_{i=1}^{k}\mathsf m^\mathbf{a}_i \underset{k\rightarrow\infty}{\sim}(c+1)\cdot k^{\frac{c}{c+1}} \qquad \text{and} \qquad \mathsf{m}^\mathbf a_k\leq (k+1)^{c'-\frac{1}{c+1}+o_\omega(1)}.
\end{equation*}
for a random function $o_\omega(1)$ which only depends on $k$ and tends to $0$ as $k\rightarrow \infty$. 
The convergence \eqref{growing:eq:convergence of degrees pa} is such that for all $n$ large enough
\begin{align}\label{growing:eq:controle degnuk}
	\forall k\geq 1, \quad \deg_{\ttP_n}^+(u_k)\leq n^{\frac{1}{c+1}}\cdot (k+1)^{c'-\frac{1}{c+1}+o_\omega(1)},
\end{align}
also for a random function $o_\omega(1)$ of $k$.

\paragraph{Distribution of $(\mathsf{M}^\mathbf a_k)_{k\geq 1}$.}
In some very specific cases for the sequence $\mathbf{a}$, the process $(\mathsf{M}^\mathbf a_k)_{k\geq 1}$ has an explicit distribution. In particular if $\mathbf{a}=a,b,b,b\dots$ then the sequence $(\mathsf{M}^\mathbf a_k)_{k\geq 1}$ has the Mittag-Leffer Markov chain distribution $\MLMC(\frac{1}{b+1},\frac{a}{b+1})$, which we define below.

Let $0<\alpha<1$ and $\theta>-\alpha$. The generalized Mittag-Leffler $\mathrm{ML}(\alpha, \theta)$ distribution has $p$th moment
\begin{align}\label{growing:eq:moments mittag-leffler}
\frac{\Gamma(\theta) \Gamma(\theta/\alpha + p)}{\Gamma(\theta/\alpha) \Gamma(\theta + p \alpha)}=\frac{\Gamma(\theta+1) \Gamma(\theta/\alpha + p+1)}{\Gamma(\theta/\alpha+1) \Gamma(\theta + p \alpha+1)}
\end{align}
and the collection of $p$-th moments for $p \in \N$ uniquely characterizes this distribution. 
Then, a Markov chain $(\mathsf M_n)_{n\geq 1}$ has the distribution $\MLMC(\alpha,\theta)$ if for all $n\geq 1$,
\begin{equation*}
\mathsf M_n\sim \mathrm{ML}\left(\alpha,\theta+n-1\right),
\end{equation*}
and its transition probabilities are characterised by the following equality in law:
\begin{equation*}
\left(\mathsf M_n,\mathsf M_{n+1}\right)=\left(B_n\cdot \mathsf M_{n+1},\mathsf M_{n+1}\right),
\end{equation*}
where $B_n\sim \mathrm{Beta}\left(\frac{\theta+k-1}{\alpha}+1,\frac{1}{\alpha}-1\right)$ is independent of $\mathsf M_{n+1}^{\alpha,\theta}$.

For any sequence $\mathbf{a}=a,k_1,k_2,\dots k_p,k_1,k_2,\dots k_p,\dots$ that is periodic starting from the second term, with $k_1,k_2,\dots k_p$ being non-negative integers, the Markov chain $(\mathsf M^\mathbf{a}_n)_{n\geq 1}$ also has a rather explicit distribution, see \cite[Proposition~28]{senizergues_geometry_2019}, involving a product of independent Gamma random variables.
\paragraph{Height.}
In this setting, we know that the height of the tree $\ttP_n$ grows logarithmically in $n$ using \cite[Theorem~3]{senizergues_geometry_2019}. We only need here the following weak version: there exists some constant $K$ such that 
\begin{equation}\label{growing:eq:height in log}
\haut(\ttP_n)\leq K\cdot \log n,
\end{equation}
almost surely for all $n$ large enough. This estimate is also true for any sequence $(\ttT_n)_{n\geq 1}$ of weighted random trees with weights $(w_n)_{n\geq 1}$ as soon as $W_n:=\sum_{i=1}^{n}w_i$ has at most a polynomial growth, which we always assume. 

\paragraph{Measures.}
For a sequence of trees $(\ttT_n)_{n\geq 1}$ evolving under the distribution $\wrt((w_n)_{n\geq 1})$ for any weight sequence $(w_n)$, the result \cite[Theorem~4]{senizergues_geometry_2019} ensures that the probability measures $(\mu_n)_{n\geq 1}$, defined in such a way that for all $k\in\{1,\dots n\}$ we have $\mu_n(u_k)=\frac{w_k}{W_n}$, converge almost surely weakly on $\overline{\bU}$ towards a limiting measure $\mu$. 

Under the conditions $\sum_{n=1}^{\infty}w_n=\infty$ and  $\sum_{n=1}^{\infty}\left(\frac{w_n}{W_n}\right)^2<\infty$, which are almost surely satisfied by our sequence $(\mathsf{m}^\mathbf a_n)_{n\geq 1}$, the limiting measure $\mu$ is carried on $\partial \bU$, and other sequences of probability measures $(\eta_n)_{n\geq 1}$ and $(\nu_n)_{n\geq 1}$, which we define below, also converge almost surely weakly towards $\mu$. 

For any $n\geq1$, the measure $\nu_n$ is just defined as the uniform measure on the set $\{u_1,\dots,u_n\}$.
The second sequence of measures $(\eta_n)_{n\geq1}$ depends on a sequence $(b_n)_{n\geq 1}$ of real numbers which satisfies $b_1>-1$ and $b_n\geq 0$ for all $n\geq 2$. We suppose that $b_n=\grandO{n^{1-\epsilon}}$ for some $\epsilon>0$ and that $B_n:=\sum_{i=1}^{n}b_i=\grandO{n}$. 
The measures are then defined in such a way that $\eta_1$ only charges the vertex $u_1$, and for every $n\geq 2$, the measure $\eta_n$ charges only the vertices $\{u_1,u_2,\dots,u_n\}$ where for any $1\leq k \leq n$,
\begin{equation}\label{growing:eq:mesures par les degrés}
\eta_n(u_k)=\frac{b_k+\deg^{+}_{\ttT_n}(u_k)}{B_n+n-1}.
\end{equation}
We recall the following result of \cite[Proposition~8]{senizergues_geometry_2019}.
\begin{proposition}\label{growing:prop:convergence measures on wrt}
Under the assumptions $\sum_{n=1}^{\infty}w_n=\infty$ and  $\sum_{n=1}^{\infty}\left(\frac{w_n}{W_n}\right)^2<\infty$, the sequences $(\mu_n)_{n\geq 1}$, $(\nu_n)_{n\geq 1}$ and $(\eta_n)_{n\geq 1}$ almost surely converge towards the same limit $\mu$.
\end{proposition}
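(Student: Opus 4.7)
My plan is to apply Lemma~\ref{growing:characterisation convergence measures}, which reduces weak convergence on $\overline{\bU}$ to almost sure convergence of $\pi_n(\{u\})$ and $\pi_n(T(u))$ simultaneously for every $u\in\bU$. I would establish these six limits for each of $\pi_n\in\{\mu_n,\nu_n,\eta_n\}$ and verify they coincide. For $\mu_n$, set $M_n(u):=\mu_n(T(u))=W_n^{-1}\sum_{k\leq n,\,u_k\in T(u)}w_k$. Conditionally on $\ttT_n$, the new vertex $u_{n+1}$ lands inside $T(u)\cap\ttT_n$ with probability exactly $M_n(u)$, which gives $\mathbb{E}[M_{n+1}(u)\mid\mathcal F_n]=M_n(u)$. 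So $(M_n(u))$ is a bounded martingale and converges almost surely to a limit, which I denote $\mu(T(u))$. Singletons are immediate: $\mu_n(\{u_k\})=w_k/W_n\to 0$ since $W_n\uparrow\infty$, so the candidate limit $\mu$ is supported on $\partial\bU$.

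For $\nu_n$, write $N_n(u):=\#\{k\leq n:u_k\in T(u)\}$, so $\nu_n(T(u))=N_n(u)/n$. The increment $N_{n+1}(u)-N_n(u)=\mathbf{1}_{u_{n+1}\in T(u)}$ has conditional mean $M_n(u)$, so $N_n(u)=\sum_{k<n}M_k(u)+R_n(u)$ with $R_n$ a martingale whose increments are bounded in $L^2$. Doob's inequality combined with Borel-Cantelli yields $R_n/n\to 0$ a.s.; combining with $M_k(u)\to\mu(T(u))$ and Cesaro, one obtains $\nu_n(T(u))\to\mu(T(u))$ a.s. The singleton case is trivial as $\nu_n(\{u\})\leq 1/n$.

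For $\eta_n$, the edge-counting identity $\sum_{k\leq n,\,u_k\in T(u)}\deg^+_{\ttT_n}(u_k)=N_n(u)-1$ yields $\eta_n(T(u))=(B_n(u)+N_n(u)-1)/(B_n+n-1)$ with $B_n(u):=\sum_{k\leq n,\,u_k\in T(u)}b_k$. The $N_n(u)$ part is handled by the previous step. For $B_n(u)$, one writes it as $\sum_k b_k\mathbf{1}_{u_k\in T(u)}$ and applies the same martingale decomposition combined with an Abel summation / Kronecker-lemma argument (legitimate because $B_n\to\infty$ or is bounded with polynomial growth) to deduce $B_n(u)/B_n\to\mu(T(u))$ a.s. Inserting both limits, and distinguishing the regimes $B_n/n\to 0$ or $B_n/n\to\beta>0$, the ratio tends to $\mu(T(u))$. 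For singletons of $\eta_n$, the point is that $\deg^+_{\ttT_n}(u_k)$ is a sum of indicators with conditional probabilities $w_k/W_{j-1}$, whose cumulative mean $w_k\sum_{j\leq n}1/W_j$ is $o(n)$ under $\sum(w_n/W_n)^2<\infty$, forcing $\eta_n(\{u_k\})\to 0$.

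The main obstacle is to secure all these convergences on a \emph{single} almost-sure event that is simultaneously valid for every $u\in\bU$. This requires quantitative control: the conditional variance of $M_{n+1}(u)-M_n(u)$ at step $n$ is bounded by a constant multiple of $(w_{n+1}/W_{n+1})^2$ uniformly in $u$, so the total predictable quadratic variation is bounded by $\sum(w_n/W_n)^2$, finite by the second hypothesis. This uniform $L^2$ bound is exactly what propagates through the Doob-type concentration inequalities used in the $\nu_n$ and $\eta_n$ analyses and legitimates the countable union of null sets over $u\in\bU$, thereby yielding almost sure weak convergence on $\overline{\bU}$.
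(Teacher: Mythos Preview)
The paper does not prove this proposition: it is recalled verbatim from the companion paper \cite[Proposition~8]{senizergues_geometry_2019}, so there is no in-paper argument to compare with. Your sketch is a reasonable outline of how such a result is established, and the martingale-plus-Ces\`aro strategy for $\mu_n$, $\nu_n$ and $\eta_n$ is the natural one.

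A few points deserve tightening. First, in your treatment of $\eta_n$ you split into the cases $B_n/n\to 0$ and $B_n/n\to\beta>0$, but the standing hypothesis is only $B_n=O(n)$, so $B_n/n$ need not converge. The clean fix is to show directly that $B_n(u)-\mu(T(u))\,B_n=o(n)$: splitting $\mathbf 1_{u_k\in T(u)}-\mu(T(u))=\bigl[\mathbf 1_{u_k\in T(u)}-M_{k-1}(u)\bigr]+\bigl[M_{k-1}(u)-\mu(T(u))\bigr]$, the second piece weighted by $b_k$ and summed is $o(n)$ by a weighted Ces\`aro argument (using $B_n=O(n)$ and $M_k(u)\to\mu(T(u))$), while for the martingale piece one uses $\sum_{k\le n}b_k^2\le(\max_{k\le n}b_k)\cdot B_n=O(n^{2-\epsilon})$ together with the martingale strong law to get $o(n)$.

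Second, your closing paragraph about needing uniform-in-$u$ quantitative control to obtain a single almost-sure event is unnecessary: $\bU$ is countable, so the intersection over $u\in\bU$ of probability-one events still has probability one, and Lemma~\ref{growing:characterisation convergence measures} only requires convergence at each fixed $u$. Relatedly, you misattribute the role of the hypothesis $\sum_{n\ge 1}(w_n/W_n)^2<\infty$: it is not what makes the bounded martingale $M_n(u)$ converge, nor is it needed for $\deg^+_{\ttT_n}(u_k)=o(n)$ (that follows from $W_n\to\infty$ alone). Its genuine role is more delicate and belongs to the companion paper's argument; your sketch does not identify where it actually enters.
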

\paragraph{Other description of the measure $\mu$ in the case of constant sequence $\mathbf{a}$.}
Suppose now that $\mathbf{a}$ is constant from the second term, say $a_1=a>-1$ and $a_n=b>0$ for all $n\geq 2$, so that it satisfies \eqref{growing:eq:assum An=cn} with $c=b$ and $c'=0$.  
For all $u\in\bU$, and all $i\geq 1$, we define using the limiting measure $\mu$ the quantities 
\begin{align}\label{growing:eq:def Pu}
\mathsf p_{ui}=\frac{\mu(T(ui))}{\mu(T(u))},
\end{align}
which describe how the mass above every vertex $u$ is split into the subtrees above its children. In this case we can explicitly describe the law of the $(\mathsf p_{u})_{u\in\bU}$ and hence also the law of $\mu$.

Moreover, let $\ell:\bU\rightarrow \R_+$ defined as
\begin{align*}
 \ell(u_n) =\mathsf m^\mathbf{a}_n \qquad \forall n \geq 1.
\end{align*}
Remark that almost surely this defines $\ell$ on all vertices of $\bU$ and that for any $u\in\bU$ we have
 \[\ell(u):=\lim_{n\rightarrow\infty}n^{-1/(b+1)} \deg^+_{\ttP_n}(u).\] 
Thanks to \cite{janson_random_2017}, the values $(\ell(u))_{u\in\bU}$ can also be expressed from the one of $(\mathsf p_{u})_{u\in\bU}$.
The following proposition describes the joint distribution of those random variables, see Section~\ref{growing:subsection:chinese restaurant process} in the appendix for the definition of the distributions involved. 
\begin{proposition}\label{growing:prop:pa tree are split trees}
	In this setting we have 
	\[(\mathsf p_{i})_{i\geq 1}\sim \GEM\left(\frac{1}{b+1},\frac{a}{b+1}\right) \quad \text{and} \quad \forall u \in \bU\setminus\{\emptyset\}, \ (\mathsf p_{ui})_{i\geq 1}\sim \GEM\left(\frac{1}{b+1},\frac{b}{b+1}\right), \]
	and they are all independent. Denote for all $u\in\bU$, 
	\begin{align*}
	S_u:=\Gamma\left(\frac{b}{b+1}\right)\cdot \lim_{i\rightarrow\infty}i\cdot \mathsf{p}_{ui}^{\frac{1}{b+1}}, 
	\end{align*}the $\frac{1}{b+1}$-diversity of the sequence $(\mathsf p_{ui})_{i\geq 1}$. Then for all $u\in \bU$,
	\[\ell(u)=\left(\prod_{v\preceq u}\mathsf p_v\right)^{\frac{1}{b+1}}\cdot S_u.\]
\end{proposition}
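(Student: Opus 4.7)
The approach is to identify the dynamics of subtree sizes around each vertex as a two-parameter Chinese restaurant process (CRP), apply classical Pitman--Yor theory to obtain the GEM distributions, and combine with the convergence $N_u(n)/n \to \mu(T(u))$ furnished by Proposition~\ref{growing:prop:convergence measures on wrt} to read off the formula for $\ell(u)$.

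First, fix $u\in\bU$ and, for $n$ such that $u\in\ttP_n$, let $k_u(n)=\deg^+_{\ttP_n}(u)$, let $N_u(n)$ be the size of the subtree $T(u)$ in $\ttP_n$, and let $N_{ui}(n)$ be the size of $T(ui)$. Set $a_u=a$ if $u=\emptyset$ and $a_u=b$ otherwise. A bookkeeping of the affine weights inside $T(u)$ yields that the total weight there equals $a_u+(b+1)(N_u(n)-1)$, and that, conditionally on the next arrival falling into $T(u)$, it attaches to $u$ itself with probability proportional to $a_u+k_u(n)$ and enters the subtree $T(ui)$ with probability proportional to $(b+1)N_{ui}(n)-1$. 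After dividing numerators and denominators by $b+1$, this matches exactly the transitions of a two-parameter Chinese restaurant process with parameters $\alpha=\tfrac{1}{b+1}$ and $\theta=\tfrac{a_u}{b+1}$, where $k_u(n)$ plays the role of the number of tables and the $N_{ui}(n)$ play the role of the table sizes.

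Second, the classical stick-breaking representation of the two-parameter CRP gives that the frequencies in order of appearance converge almost surely to a $\GEM(\alpha,\theta)$ sequence; applied at $u=\emptyset$ and at $u\neq\emptyset$ this is exactly the two distributions announced in the proposition. The independence of $(\mathsf p_{ui})_{i\geq 1}$ across different $u$ comes from a standard branching argument: conditionally on which arrivals fall into which subtrees, the routing within each subtree is performed by an independent copy of the CRP, and a de Finetti/martingale argument along $\bU$ lifts this conditional independence to unconditional mutual independence.

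Third, by the classical identification of the $\alpha$-diversity of a $\GEM$ sequence, which is the content of the appendix and of \cite{janson_random_2017}, the almost sure limit
\[
S_u=\Gamma\!\left(\tfrac{b}{b+1}\right)\cdot\lim_{i\to\infty} i\cdot\mathsf p_{ui}^{1/(b+1)}
\]
exists and coincides with $\lim_{n\to\infty} k_u(n)/N_u(n)^{1/(b+1)}$. Combining this with the convergence $N_u(n)/n\to\mu(T(u))=\prod_{v\preceq u,\,v\neq\emptyset}\mathsf p_v$ provided by Proposition~\ref{growing:prop:convergence measures on wrt} (applied to the sequence $\nu_n$ of uniform measures on $\{u_1,\dots,u_n\}$), one obtains
\[
\ell(u)=\lim_{n\to\infty}\frac{\deg^+_{\ttP_n}(u)}{n^{1/(b+1)}}=\lim_{n\to\infty}\frac{k_u(n)}{N_u(n)^{1/(b+1)}}\cdot\left(\frac{N_u(n)}{n}\right)^{1/(b+1)}=S_u\cdot\left(\prod_{v\preceq u}\mathsf p_v\right)^{1/(b+1)},
\]
which is the claimed formula.

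The main technical subtlety is verifying the exact normalising constant $\Gamma(b/(b+1))$ in the definition of $S_u$, i.e.\ the precise Pitman--Yor relation between the $\alpha$-diversity of a two-parameter $\GEM$ sequence (defined as the a.s.\ limit of $K_n/n^\alpha$ in the underlying CRP) and the size-biased asymptotic $i\cdot\mathsf p_i^\alpha$. Once that identity is in hand, the rest of the argument is a clean combination of the CRP identification above and the convergence results of the companion paper.
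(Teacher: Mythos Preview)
Your approach is correct and is precisely the argument underlying the cited result of Janson: identify the subtree-size dynamics at each vertex as a Chinese restaurant process with parameters $(\tfrac{1}{b+1},\tfrac{a_u}{b+1})$, read off the GEM laws and independence from the branching structure, and recover $\ell(u)$ by combining the $\alpha$-diversity with the mass convergence $N_u(n)/n\to\mu(T(u))$. The paper's own proof is simply a pointer to \cite[Theorem~1.5]{janson_random_2017} with details left to the reader, so you have essentially supplied those details; your flagging of the normalising constant in $S_u$ is exactly the right place to be careful.
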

\begin{proof}
	This result almost follows from \cite[Theorem~1.5]{janson_random_2017} and the adaptation to our case is left to the reader.
\end{proof}

\section{Distributions on decorations}\label{growing:sec:distributions on decorations}
In this section, we define two families of distributions on decorations on the Ulam tree that will arise as limits of our discrete models.  
\subsection{The iterative gluing construction}
Let $(\bB_n,\bD_n,\bRho_n,(X_{n,i})_{i\geq 1})_{n\geq 1}$ be a sequence of independent random variables in $\M^{\infty \bullet}$, meaning compact pointed metric spaces endowed with a sequence of points. Let also $(w_n)_{n\geq1}$ and $(\lambda_n)_{n\geq1}$ be two sequences of non-negative real numbers, which we call respectively the weights and scaling factors. 
The model is the following: first sample $(\ttT_n)_{n\geq 1}$ with distribution $\wrt((w_n)_{n\geq 1})$. Then for all $n\geq1$, denoting $u_n$ the $n$-th created vertex in the trees $(\ttT_n)_{n\geq 1}$, we set 
\begin{equation}
\cD(u_n)=(D_{u_n},d_{u_n},\rho_{u_n},(x_{u_ni})_{i\in\N}):=(\bB_n,\lambda_n\cdot \bD_n,\bRho_n,(X_{n,i})_{i\geq 1}),
\end{equation}
and for all $u\notin \enstq{u_n}{n\geq 1}$, we set 
\begin{equation*}
\cD(u)=\left(\{\star\},0,\star,(\star)_{i\geq 1}\right).
\end{equation*}
Let us assume that the cumulated sum of the weights $W_n$ does not grow faster to infinity than polynomially, so that the height of the tree grows at most logarithmically (see \cite[Theorem~3]{senizergues_geometry_2019}). 
We also assume that there exists $\alpha>0$ and $p>1$ with $\alpha p>1$  such that \[\lambda_n\leq n^{-\alpha +\petito{1}}\quad \text{and} \quad \sup_{n\geq 1}\Ec{\diam(B_n)^p}<\infty,\] 
then we have almost surely $\diam \cD(u_n)\leq n^{-\epsilon+o_\omega(1)}$, with $\epsilon=\alpha -\frac{1}{p}>0$.
This is easily derived using Markov inequality and the Borel-Cantelli lemma.
Using Lemma~\ref{growing:lem:sufficient condition for non-explosion}, the function $(u\mapsto \diam(\cD_n(u)))$ is then almost surely non-explosive so $\sG(\cD)$ is almost surely compact. 

Assuming that the sequence $(w_n)_{n\geq 1}$ has an infinite sum, the limit $\mu$ of the weight measure associated to the trees $(\ttT_n)_{n\geq 1}$ is almost surely carried on $\partial\bU$ and the random metric space $\sG(\cD)$ can a.s. be endowed with a probability measure $(\iota_{\cD})_*\mu$ and this yields the random measured metric space $\sG(\cD,\mu)$, as constructed in \eqref{growing:eq:def GDmu}. 

We call this procedure the \emph{iterative gluing construction} with blocks $(\bB_n,\bD_n,\bRho_n,(X_{n,i})_{i\geq 1})_{n\geq 1}$, scaling factors $(\lambda_n)_{n\geq 1}$ and weights $(w_n)_{n\geq1}$. 
We allow the sequences $(\lambda_n)_{n\geq 1}$ and $(w_n)_{n\geq1}$ to be random and in this case we assume that they are independent of the blocks $(\bB_n,\bD_n,\bRho_n,(X_{n,i})_{i\geq 1})_{n\geq 1}$ .

\paragraph{The case of exchangeable distinguished points.}
A special case of the above construction is given when $(\bB_n,\bD_n,\bRho_n,(X_{n,i})_{i\geq 1},\bNu_n)_{n\geq 1}$ is a sequence in $\K^{\infty \bullet}$ and that for all $n\geq 1$, conditionally on $\bNu_n$, the points $(X_{n,i})_{i\geq 1}$ are i.i.d. with law $\bNu_n$, independent of everything else.
In this case we still call this distribution the iterative gluing construction with blocks $(\bB_n,\bD_n,\bRho_n,\bNu_n)_{n\geq 1}$, scaling factors $(\lambda_n)_{n\geq 1}$ and weights $(w_n)_{n\geq1}$. This is the setting studied in \cite{senizergues_random_2019}.

\subsection{The self-similar case}\label{growing:subsec:the self-similar case}
Let us describe particular cases of models that are self-similar in distribution.
This setting is adapted from the one studied by Rembart and Winkel in \cite{rembart_recursive_2018}, which deals with several models of self-similar random trees. Let us define a model inspired from theirs. 
We fix $\beta>0$ and the law of a couple $\left((\bB,\bD,\bRho,(X_{i})_{i\geq1}), (P_{i})_{i\geq 1}\right)$, where the first coordinate is a random variable in $\M^{\infty\bullet}$ and $(P_{i})_{i\geq 1}$ is a random variable in, say, $\intervalleff{0}{1}^\N$.
In order to use mimic the notation of \cite{rembart_recursive_2018}, we let $\Xi=\M^{\infty\bullet}\times\intervalleff{0}{1}^\N$.
We consider a family
\begin{align*}
(\xi_u)_{u\in\bU}=\left((\bB_u,\bD_u,\bRho_u,(X_{ui})_{i\geq1}), (P_{ui})_{i\geq 1}\right)_{u\in\bU}
\end{align*}  of r.v.\ in $\Xi$ which are i.i.d., with the same law as some $\xi=\left((\bB,\bD,\bRho,(X_{i})_{i\geq1}), (P_{i})_{i\geq 1}\right)$. We set $P_\emptyset=1$ and 
\begin{align*}
\lambda_u=\left(\prod_{v\preceq u} P_v\right)^\beta
\end{align*}
We then define our random decorations as, for all $u\in\bU$,
\begin{equation}
\cD(u):= (\bB_u,\lambda_u \cdot \bD_u,\bRho_u,(X_{ui})_{i\geq1}).
\end{equation}
We say that $\cD$ is a self-similar decoration with exponent $\beta$ and \emph{base} distribution given by $\xi$.

We want to show that, under suitable assumptions, the resulting $\sG(\cD)$ is almost surely compact.
In our examples, the distribution of $(P_i)_{i\geq 1}$ will always be $\GEM(\alpha,\theta)$ for some parameters $\alpha\in\intervalleoo{0}{1}$ and $\theta>-\alpha$, see Section~\ref{growing:subsection:chinese restaurant process} in the Appendix for references about this process, but the arguments presented here are still valid in greater generality. 

\paragraph{The function $\phi_\beta$.}
\begin{figure}
	\begin{center}
		\includegraphics[height=4cm]{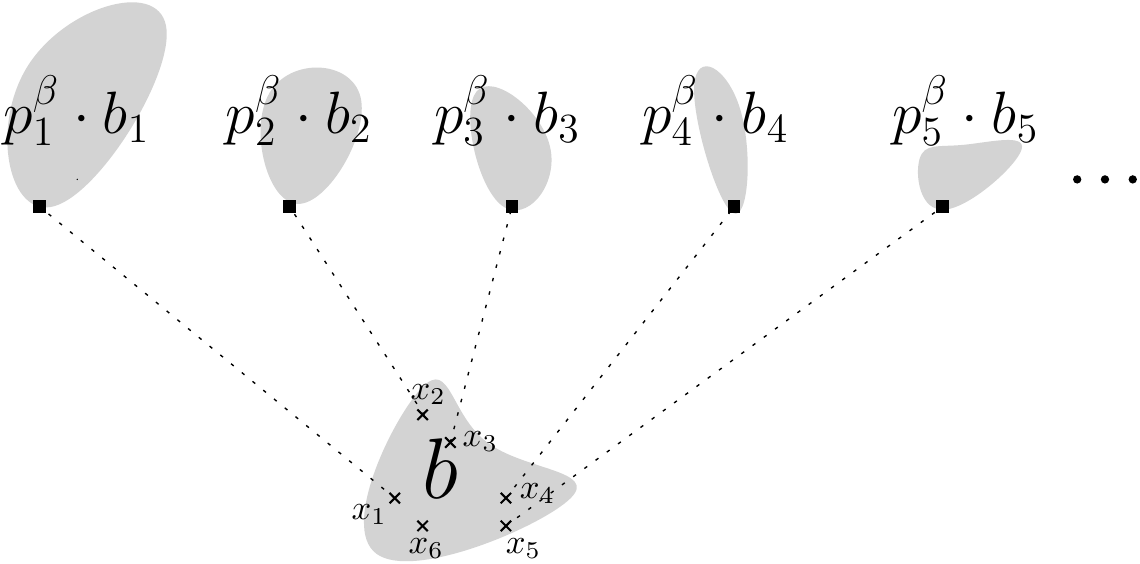} 
		\caption{The function $\phi_\beta$}\label{growing:fig:function phi_beta} 
	\end{center}
\end{figure}
We first define, for any $n\geq 1$, the function \[\phi_\beta^{(n)}:\Xi\times (\M^{\bullet})^\N\rightarrow \M^{\bullet},\] as follows:
$\phi_\beta^{(n)}((b,d,\rho,(x_i)_{i\geq 1}),(p_i)_{i\ge 1}, (b_i,d_i,\rho_i)_{i\geq 1})$ is the metric space obtained after gluing the $n$ first $b_i$ with distances scaled by $p_i^\beta$ by identifying their root $\rho_i$ with the point $x_i\in b$. For any $n\geq 1$ this operation is continuous with respect to the product topology on the starting space, hence it is measurable. 

Now we define $\phi_\beta$ as $\lim_{n\rightarrow\infty} \phi_\beta^{(n)}$ on the set where this limit exists, and constant equal to $(\{\ast\},0,\ast)$ on the complementary set. Since $\M^\bullet$ is Polish, the function $\phi_\beta$ is measurable. 
Remark that the condition for the limit to exists is
\begin{align*}
p_i \diam(b_i)\underset{i\rightarrow\infty}{\longrightarrow}0.
\end{align*}

\paragraph{The contraction $\Phi_\beta$.}
Consider the set of probability measures $\mathcal P(\M^\bullet)$ on the space $\M^\bullet$ and for $p\ge 1$, the subset $\mathcal P_p \subset \mathcal P(\M^\bullet)$ given by
\begin{equation} \mathcal P_{p}:=\enstq{\eta\in\mathcal P(\M^\bullet)}{\Ec{\diam(\tau)^{p}} < \infty \text{ for } \tau \sim \eta}. \end{equation}
We equip $\mathcal P_p$ with the \emph{Wasserstein metric} of order $p \geq 1$, which is defined by
\begin{equation}
W_p\left(\eta, \eta'\right) :=\left(\inf \mathbb E\left[\left|d_{\rm GH}\left(\tau, \tau'\right)\right|^p\right]\right)^{1/p}, \qquad \eta, \eta' \in \mathcal P_p,
\end{equation}
where the infimum is taken over all joint distributions of $(\tau,\tau^\prime)$ on $(\M^\bullet)^2$ with marginal distributions $\tau\sim\eta$ and $\tau^\prime\sim\eta'$. The space $(\mathcal P_p, W_{p})$ is complete since $d_{\rm GH}$ is a complete metric on $\M^\bullet$. Convergence in $(\mathcal P_p,W_p)$ implies weak convergence on $\M^\bullet$ and convergence of $p$th diameter moments.

Then we define the function $\Phi_\beta: \cP_p\rightarrow\cP_p$ where for any $\eta\in \cP_p$, the image $\Phi_\beta(\eta)$ is the distribution of
\begin{align*}
\phi_\beta(\xi,(\tau_i)_{i\geq 1}),
\end{align*}
where the $(\tau_i)_{i\geq 1}$ are i.i.d.\ random variables with law $\eta$, independent of $\xi$. Now let us state a result that was stated in the context of trees but remains valid in our case.
\begin{lemma}[Lemma~3.4 of \cite{rembart_recursive_2018}]\label{growing:lem:rembart winkel} Let $\beta>0$, $p\ge 1$ and $\left((\bB,\bD,\bRho,(X_{i})_{i\geq1}), (P_{i})_{i\geq 1}\right)$ such that $\Ec{\diam(\bB)^p}<\infty$ and $\Ec{\sum_{j \geq 1}P_j^{p\beta}} < 1$. Then the map $\Phi_\beta \colon \mathcal P_p \rightarrow \mathcal P_p$ associated with 
	$\phi_\beta$ is a strict contraction with respect to the Wasserstein metric of order $p$, i.e. 
	\begin{equation}
	\sup \limits_{\eta, \eta' \in \mathcal P_p, \eta \neq \eta'} \frac{W_p \left(\Phi_\beta(\eta), \Phi_\beta(\eta')\right)}{W_p(\eta, \eta')} < 1.
	\end{equation}
\end{lemma}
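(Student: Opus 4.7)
The plan is to construct an explicit coupling on the image side that realizes the contraction. Fix $\eta, \eta' \in \cP_p$ with $\eta \neq \eta'$, and choose a coupling $(\tau, \tau')$ of $\eta$ and $\eta'$ achieving $\Ec{\mathrm{d_{GH}}(\tau, \tau')^p} = W_p(\eta, \eta')^p$. Consider a sequence of i.i.d.\ copies $((\tau_i, \tau_i'))_{i\geq 1}$ of $(\tau, \tau')$, globally independent of a realization of the base data $\xi = ((\bB, \bD, \bRho, (X_i)_{i\geq 1}), (P_i)_{i\geq 1})$. Setting $\tilde\tau := \phi_\beta(\xi, (\tau_i)_{i\geq 1})$ and $\tilde\tau' := \phi_\beta(\xi, (\tau_i')_{i\geq 1})$, the pair $(\tilde\tau, \tilde\tau')$ is a coupling of $\Phi_\beta(\eta)$ and $\Phi_\beta(\eta')$.

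The crucial step is to prove the almost sure bound
\[
\mathrm{d_{GH}}(\tilde\tau, \tilde\tau') \leq \sup_{i\geq 1} P_i^\beta \, \mathrm{d_{GH}}(\tau_i, \tau_i').
\]
To establish it, I exploit the compact homogeneity and extension property of $\cU$: first embed $\bB$ isometrically into $\cU$; then, for each $i\geq 1$, embed $\tau_i$ and $\tau_i'$ (each scaled by $P_i^\beta$) into $\cU$ sending both roots to $X_i$, within $\epsilon$ of their (rooted) Gromov--Hausdorff distance, and into a ``fresh'' region of $\cU$ so that different blocks only touch $\bB$ at the designated gluing points. Since the two glued spaces share the image of $\bB$ and the $i$-th attached block of each lies only in its own branch, the ambient Hausdorff distance equals the supremum over $i$ of the per-block Hausdorff distances, up to the arbitrarily small $\epsilon$.

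Raising to the $p$-th power and using $\sup_i a_i \leq \sum_i a_i$ for $a_i \geq 0$,
\[
\mathrm{d_{GH}}(\tilde\tau, \tilde\tau')^p \leq \sum_{i \geq 1} P_i^{p\beta} \, \mathrm{d_{GH}}(\tau_i, \tau_i')^p.
\]
Taking expectations, using the independence of $\xi$ from $(\tau_i, \tau_i')_{i\geq 1}$ and the fact that each pair $(\tau_i, \tau_i')$ has the law of the optimal coupling,
\[
\Ec{\mathrm{d_{GH}}(\tilde\tau, \tilde\tau')^p} \leq \Ec{\sum_{j\geq 1} P_j^{p\beta}} \cdot W_p(\eta, \eta')^p.
\]
Since $(\tilde\tau, \tilde\tau')$ is one admissible coupling of $\Phi_\beta(\eta)$ and $\Phi_\beta(\eta')$, this gives $W_p(\Phi_\beta(\eta), \Phi_\beta(\eta')) \leq C \cdot W_p(\eta, \eta')$ with $C := \Ec{\sum_{j\geq 1} P_j^{p\beta}}^{1/p} < 1$, independently of $\eta, \eta'$, which is the desired strict contraction.

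The main obstacle is making the embedding argument of the second paragraph fully rigorous: one has to sequentially embed an infinite family of pairs of blocks into $\cU$ while keeping them pairwise disjoint away from the base, and then check that distances between points in different branches behave exactly as in the formal gluing. A minor preliminary point is to verify that $\Phi_\beta$ actually maps $\cP_p$ into itself (so that both sides of the contraction inequality are finite); this follows from $\diam(\tilde\tau) \leq \diam(\bB) + 2\sup_i P_i^\beta \diam(\tau_i)$ together with the same $\sup \leq \sum$ trick applied to the $p$-th power, combined with the assumed moment conditions on $\diam(\bB)$ and $\sum_j P_j^{p\beta}$.
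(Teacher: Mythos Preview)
The paper does not actually prove this lemma: it is quoted verbatim as Lemma~3.4 of \cite{rembart_recursive_2018} with the comment that the result ``was stated in the context of trees but remains valid in our case,'' and is then immediately used to invoke Banach's fixed point theorem. So there is no paper proof to compare against.

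Your argument is the standard one from \cite{rembart_recursive_2018}: couple the images by using the \emph{same} realisation of $\xi$ and an optimal i.i.d.\ coupling of the attached blocks, then bound the Gromov--Hausdorff distance between the two glued spaces by the supremum of the per-branch distances, pass to the sum of $p$-th powers, and take expectations using independence. This is correct, and your constant $C=\Ec{\sum_j P_j^{p\beta}}^{1/p}$ is exactly the contraction factor obtained in that reference. The self-diagnosed ``main obstacle'' (embedding infinitely many pairs of blocks into $\cU$ so that the ambient Hausdorff distance equals the branchwise supremum) is genuine but routine: one can avoid $\cU$ entirely by working with correspondences, since a correspondence $R$ between $\bB$ and itself (the diagonal) together with near-optimal correspondences $R_i$ between $P_i^\beta\cdot\tau_i$ and $P_i^\beta\cdot\tau_i'$ (matching roots) glue to a correspondence between $\tilde\tau$ and $\tilde\tau'$ whose distortion is $\sup_i \mathrm{dis}(R_i)$. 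Your remark that $\Phi_\beta$ preserves $\cP_p$ via $\diam(\tilde\tau)\le \diam(\bB)+2\sup_i P_i^\beta\diam(\tau_i)$ is also correct and needed.
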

Now using Banach fixpoint theorem, we know that their exists in $\cP_p$ a unique fixed point of this function $\Phi_\beta$. 

\paragraph{Compactness.}Finally, the almost sure compactness of our structure $\sG(\cD)$ is ensured by \cite[Prop. 3.5]{rembart_recursive_2018}, and actually the distribution of $\sG(\cD)$ is exactly the fixpoint of $\Phi_\beta$, and this fixpoint is attractive. 
\paragraph{Measure on the leaves.}
If we restrict ourselves to the case where the sequence $(P_i)_{i\geq 1}$ is such that $\sum_{i=1}^{\infty}P_i=1$ almost surely, we can define a measure $\mu$ on $\partial\bU$ as follows:
\begin{align*}
\forall u\in\bU, \quad \mu(T(u))=\prod_{v\preceq u} P_v.
\end{align*}
Then we can consider the measured metric space $\sG(\cD,\mu)$ by endowing $\sG(\cD)$ with the measure $\tilde\mu=(\iota_\cD)_*\mu$. Under the condition $\Pp{\exists i\geq 1,\ \bD(\rho, X_i)> 0 \text{ and } P_i>0}>0$, one can check that this measure is almost surely carried on the set of leaves $\sL(\cD)$.

\paragraph{Hausdorff dimension of the leaves.}
Under some mild hypotheses on the distribution of our blocks, we can compute the Hausdorff dimension of $\sL(\cD)$ almost surely. 
\begin{proposition}
	Let $\beta>0$, $p\ge 1$ and $\left((\bB,\bD,\bRho,(X_{i})_{i\geq1}), (P_{i})_{i\geq 1}\right)$ such that $\Ec{\diam(\bB)^p}<\infty$ and $\Ec{\sum_{j \geq 1}P_j^{p\beta}} < 1$. Suppose furthermore that almost surely $\sum_{j \geq 1}P_j=1$ and that $\Pp{\exists i\geq 1,\ \bD(\rho, X_i)> 0 \text{ and } P_i>0}>0$. Then the Hausdorff dimension of $\sL(\cD)$ is almost surely:
	\begin{align*}
	\dim_{\mathrm H}(\sL(\cD))=\frac{1}{\beta}.
	\end{align*}
\end{proposition}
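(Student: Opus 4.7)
The plan is to establish matching upper and lower bounds, both flowing from the self-similar recursion at the root. Write $D:=\diam(\sG(\cD))$; by Lemma~\ref{growing:lem:rembart winkel}, since the distribution of $\sG(\cD)$ is the fixpoint of $\Phi_\beta$ in $\cP_p$, one has $\E[D^p]<\infty$. For every $u\in\bU$, the self-similar construction implies that $\iota_\cD(T(u))$ is isometric to $\lambda_u\cdot \iota_{\cD'_u}(\partial\bU)$ for a decoration $\cD'_u$ built from $(\xi_v)_{v\succeq u}$, hence of diameter at most $\lambda_u D_u$ with $D_u\overset{d}{=}D$ independent of $\lambda_u$. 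Two numerical observations will be used throughout. First, since $\sum_i P_i=1$ a.s., the hypothesis $\E[\sum_i P_i^{p\beta}]<1$ forces $p\beta>1$, hence $p>1/\beta$. Second, combined with the non-degeneracy hypothesis, this gives that $q\mapsto \E[\sum_i P_i^q]$ equals $1$ at $q=1$ and is strictly less than $1$ for $q>1$; consequently $\E[\sum_i P_i^{s\beta}]<1$ for every $s\in(1/\beta,p]$, and $\E[\sum_i P_i^{2-s\beta}]<1$ for every $s$ in some left neighbourhood of $1/\beta$.

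For the upper bound, fix $s\in(1/\beta,p)$. The collection $\{\iota_\cD(T(u)):|u|=n\}$ covers $\sL(\cD)\subseteq\iota_\cD(\partial\bU)$, and the independence of $D_u$ from $\lambda_u$ together with the iid structure of $(\xi_v)$ yields
\[
\E\Bigl[\sum_{|u|=n}(\lambda_u D_u)^s\Bigr] \;=\; \E[D^s]\cdot \Bigl(\E\Bigl[\sum_i P_i^{s\beta}\Bigr]\Bigr)^n \;\xrightarrow[n\to\infty]{}\; 0.
\]
A Borel-Cantelli argument along a suitable subsequence then gives $\mathcal H^s(\sL(\cD))=0$ almost surely, and letting $s\downarrow 1/\beta$ concludes $\dim_{\mathrm H}(\sL(\cD))\leq 1/\beta$ a.s.

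For the lower bound, I would apply the energy method to $\tilde\mu:=(\iota_\cD)_*\mu$. Sampling $(X,Y)=(\iota_\cD(\bi),\iota_\cD(\bj))$ with $\bi,\bj$ iid of law $\mu$, let $w=\bi\wedge \bj$ and let $i,j$ be the diverging letters at height $\abs{w}+1$. The gluing distance formula from Section~\ref{growing:subsec:decorations on the ulam tree} yields $\dist(X,Y)\geq \lambda_w\cdot \bD_w(X_{wi},X_{wj})$. Decomposing the energy $\mathcal I_s:=\iint \dist(x,y)^{-s}\,d\tilde\mu(x)d\tilde\mu(y)$ according to $w$, then invoking independence of $\xi_w$ from $(\xi_v)_{v\prec w}$ and iterating the branching identity, one obtains
\[
\E[\mathcal I_s]\;\leq\; \Bigl(\sum_{n\geq 0}\Bigl(\E\Bigl[\sum_i P_i^{2-s\beta}\Bigr]\Bigr)^n\Bigr)\cdot \mathcal R_s,\qquad \mathcal R_s:=\E\Bigl[\sum_{i\neq j}P_iP_j\,\bD(X_i,X_j)^{-s}\Bigr].
\]
For $s<1/\beta$ sufficiently close to $1/\beta$, $2-s\beta>1$, so the geometric series converges; once $\mathcal I_s<\infty$ a.s., Frostman's theorem delivers $\dim_{\mathrm H}(\sL(\cD))\geq s$, and sending $s\uparrow 1/\beta$ closes the matching lower bound.

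The hard part will be the finiteness of $\mathcal R_s$: a priori the block distances $\bD(X_i,X_j)$ can be arbitrarily small, so $\bD(X_i,X_j)^{-s}$ need not be integrable under the base law. My plan to overcome this is to iterate the self-similar decomposition one or more levels deeper inside each subtree before extracting the separating block: the non-degeneracy hypothesis $\Pp{\exists i\geq 1,\ \bD(\rho,X_i)>0\text{ and }P_i>0}>0$ guarantees that with uniformly positive probability a fresh block produces two attachment points at \emph{positive} separation, so after a geometric number of generations one reaches a configuration with uniformly lower-bounded attachment distances, whose contribution plays the role of $\mathcal R_s$ after absorbing the extra scaling factors into the geometric series. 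An equivalent route would be to forgo the energy method and instead establish a mass distribution bound $\tilde\mu(B(x,r))\leq C r^{s}$ at $\tilde\mu$-a.e.\ $x$, the same non-degeneracy assumption serving to bound the number of level-$n$ subtrees meeting a ball of radius $r$. Either way, this careful propagation of non-degeneracy across scales is the delicate step.
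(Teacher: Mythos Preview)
Your upper bound is fine and is essentially the argument the paper invokes (it defers to \cite[Lemma~4.6]{rembart_recursive_2018}).

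For the lower bound, your energy-method route is genuinely different from the paper's, but as you yourself signal with ``I would'', ``my plan'', ``would be'', it is not a proof: the crucial finiteness of $\mathcal R_s=\Ec{\sum_{i\neq j}P_iP_j\,\bD(X_i,X_j)^{-s}}$ is never established. The ``iterate deeper until separation is bounded below'' idea is not obviously salvageable under the stated hypotheses, because nothing controls the lower tail of $\bD(X_i,X_j)$; even after a geometric number of generations you still pick up a factor $\bD(X_i,X_j)^{-s}$ at the separating block, and the non-degeneracy assumption only guarantees that \emph{some} $\bD(\rho,X_i)$ is positive with positive probability, not that pairwise distances have integrable negative powers. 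Your alternative suggestion of a uniform mass bound $\tilde\mu(B(x,r))\le Cr^s$ would likewise need quantitative control you do not have.

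The paper avoids any negative-moment issue by working pointwise along a single $\mu$-random ray $\mathbf I$ and invoking the mass distribution principle only along a well-chosen sequence of radii. Setting $R_n=\dist(\rho_{\mathbf I_n},\iota_\cD(\mathbf I))$, one has $\tilde\mu(B(\iota_\cD(\mathbf I),R_n))\le\mu(T(\mathbf I_n))=\prod_{k\le n}P_{\mathbf I_k}$, and the i.i.d.\ structure along the ray gives $\log\mu(T(\mathbf I_n))\sim n\,\Ec{\sum_iP_i\log P_i}$ by the law of large numbers. The matching asymptotic $\log R_n\sim n\beta\,\Ec{\sum_iP_i\log P_i}$ uses the non-degeneracy hypothesis only \emph{qualitatively}: fix $\delta>0$ with $\Pp{\bD(\rho,X_I)>\delta}>\delta$, and Borel--Cantelli shows that some index $\tau_n\in[n,n+\sqrt n]$ achieves $\bD_{\mathbf I_{\tau_n}}(\rho,X_{\mathbf I_{\tau_n+1}})>\delta$, yielding $R_n\ge\delta\bigl(\prod_{k\le n+\sqrt n}P_{\mathbf I_k}\bigr)^\beta$. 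An upper bound comes from $R_n=\lambda_{\mathbf I_n}R_n'$ with $R_n'\overset{d}{=}R_0\in L^1$. Since $\log R_n/\log R_{n+1}\to1$, this sequence is dense enough on a logarithmic scale to conclude $\dim_{\mathrm H}(\sL(\cD))\ge 1/\beta$ for $\tilde\mu$-a.e.\ leaf, without ever integrating a negative power of a block distance.
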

\begin{proof}
	We prove this by providing an upper-bound and a lower-bound for the dimension. The upper-bound follows from the proof of \cite[Lemma 4.6]{rembart_recursive_2018} which adapts to our new setting. For the lower-bound, we provide a direct argument, which uses crucially the assumption that $\sum_{j \geq 1}P_j=1$ a.s. Indeed, in this case, the preceding paragraph ensures the existence of a measure $\tilde\mu$ on  $\sL(\cD)$. Let us show that for $\tilde\mu$-almost every point $x$, we have:
	\begin{align}\label{growing:eq:mass dist principle}
	\liminf_{r\rightarrow 0} \frac{\log \tilde\mu(B(x,r)) }{-\log r} \leq -\frac{1}{\beta},
	\end{align} 
	which will prove the proposition, using the mass distribution principle (see \cite{falconer_fractal_2014} for example). 
	Actually, it is easy to see that, for \eqref{growing:eq:mass dist principle} to hold, it is enough to provide a sequence $(r_n)_{n\geq 1}$ tending to $0$ such that $\frac{\log r_n}{\log r_{n+1}}\rightarrow 1$ and
	\begin{align}\label{growing:eq:mass dist principle seq}
	\liminf_{n\rightarrow \infty} \frac{\log \tilde\mu(B(x,r_n)) }{-\log r_n} \leq -\frac{1}{\beta},
	\end{align} 
	
	Let us prove that \eqref{growing:eq:mass dist principle seq} holds almost surely for a point $L$ taken under the measure $\tilde\mu$ and a random sequence $R_n$. Using the product definition of $\mu$, it is straightforward to see that if $\mathbf{I}=I_1I_2,\dots\in\partial\bU$ is taken under the measure $\mu$, then the sequence $I_1,I_2,\dots$ is i.i.d.\ with the same distribution as $I$ given by $\Ppsq{I=i}{(P_j)_{j\geq 1}}=P_i$, where the sequence $(P_j)_{j\geq 1}$ has the distribution of that in the theorem. We can compute $\Ec{\log P_I}=\Ec{\Ecsq{\log P_I}{(P_j)_{j\geq 1}}}=\Ec{\sum_{i=1}^{\infty}P_i\log P_i}$. 
	
	Let $R_n:=\dist(\rho_{\mathbf{I}_n},\iota_{\cD}(\mathbf{I}))$ be the distance of the random leaf $L:=\iota_{\cD}(\mathbf{I})$ to the root $\rho_{\mathbf{I}_n}$ of the block $\cD(\mathbf{I}_n)$. Remark that the open ball $\Ball(L,R_n)$ of centre $L$ and radius $R_n$ only contains points that come from decorations with indices $u\succeq \mathbf I_n$ so that $\mu(\Ball(L,R_n))\leq \mu(T(\mathbf I_n))$.
	
Now write
	\begin{align*}
\log \mu(\Ball(L,R_n)) \leq	\log \mu(T(\mathbf I_n))=\sum_{i=1}^{n} \log P_{\mathbf I_i} \underset{n\rightarrow\infty}{\sim} n \cdot \Ec{\sum_{i=1}^{\infty}P_i\log P_i},
	\end{align*}
	almost surely, because of the law of large numbers. Now it suffices to prove that almost surely
	\begin{align}\label{growing:eq:log rn sim}
	\log R_n\underset{n\rightarrow\infty}{\sim}n \beta \cdot \Ec{\sum_{i=1}^{\infty}P_i\log P_i},
	\end{align}
	and \eqref{growing:eq:mass dist principle seq} would follow for the random leaf $L$ thanks to the two last displays. In order to prove \eqref{growing:eq:log rn sim} we write
	\begin{align*}
	R_n:=\dist(\rho_{\mathbf{I}_n},\iota_{\cD}(\mathbf{I}))&=\sum_{k=n}^{\infty} \left(\prod_{i=1}^{k}P_{\mathbf I_i}\right)^\beta \bD_{\mathbf{I}_k}(\rho, X_{\mathbf{I}_{k+1}}),
	\end{align*}
	using the definition of the distances in $\sG(\cD)$. 
Then let us fix $\delta>0$ such that $\Pp{\bD(\rho,X_I)>\delta}>\delta$, and let $\tau_n=\inf\enstq{i\geq n}{\bD_{\mathbf{I}_i}(\rho, X_{\mathbf{I}_{i+1}})>\delta}$. Then we have
	\begin{align*}
	\Pp{\tau_n\geq n+\sqrt{n}}\leq (1-\delta)^{\sqrt{n}},
	\end{align*}  
	which is summable in $n$, so that using Borel-Cantelli lemma, we have almost surely: $n\leq \tau_n \leq  n+\sqrt{n}$. Then for all $n$ large enough
	\begin{align*}
	R_n\geq \left(\prod_{i=1}^{n+\sqrt{n}}P_{\mathbf I_i}\right)^\beta\cdot \delta,
	\end{align*}
	and this proves that $\log R_n\geq \beta \sum_{i=1}^{n+\sqrt{n}}\log P_{\mathbf{I}_i}+\log \delta$. For an upper bound, remark that 
	\begin{align*}
	R_n=\left(\prod_{i=1}^{n}P_{\mathbf I_i}\right)^\beta \cdot \underbrace{\left(\bD_{\mathbf{I}_n}(\rho, X_{\mathbf{I}_{n+1}}) +P_{\mathbf I_{n+1}}\sum_{k=n+1}^\infty \left(\prod_{i=n+1}^{k}P_{\mathbf I_i}\right)^\beta \bD_{\mathbf{I}_i}(\rho, X_{\mathbf{I}_{i+1}})\right)}_{R_n'},
	\end{align*}
	where $R_n'$ has the same law as $R_0$, which admits a finite first moment. Using Markov inequality and Borel-Cantelli lemma, we get that almost surely for any $n$ large enough, $R_n'\leq n^p$ for some $p>1$. Then for all $n\geq 1$ large enough
	\begin{align*}
	\log R_n &= \log \left(\prod_{i=1}^{n}P_{\mathbf I_i}\right)^\beta + \log R_n'\leq \beta \sum_{i=1}^{n} \log P_{\mathbf I_i} + p \log n.
	\end{align*}
	In the end, using the upper and lower bound on $R_n$ and the law of large numbers we get \eqref{growing:eq:log rn sim}, which finishes the proof of the proposition.
\end{proof}
\paragraph{Almost-self-similar decorations.}
For our needs, we define a slight variation of this model where we only suppose that the random variables $(\xi_u)_{u\in\bU\setminus \{\emptyset\}}$ have the same law as $\xi$, and $\xi_\emptyset=\left((\bB_\emptyset,\bD_\emptyset,\bRho_\emptyset,(X_{i})_{i\geq1}), (P_{i})_{i\geq 1}\right)$ is independent of the variables $(\xi_u)_{u\in\bU\setminus \{\emptyset\}}$ but can possibly have a different law. 

In this case, we say that the obtained $\cD$ is almost-self-similar with exponent $\beta$ and base distributions $\xi_\emptyset$ and $\xi$. 
If $\xi_\emptyset$ satisfies the conditions of Lemma~\ref{growing:lem:rembart winkel} as well as $\xi$, the above arguments still hold and the obtained random metric space has the law of $\phi_\beta(\xi_\emptyset,(\tau_i)_{i\geq 1})$ where the $(\tau_i)_{i\geq 1}$ are i.i.d.\ with distribution $\eta$ which is the unique fixed point of $\Phi_\beta$. 
\subsection{Some decorations constructed by iterative gluing are also self-similar }
Some random decorations that are described using an iterative gluing construction also belong to the family of almost-self-similar decorations. The following proposition ensures that this is the case for a particular family of iterative gluing constructions.
\begin{proposition}\label{growing:prop:iterative gluing constructions can be ass}
	Suppose that $\cD$ is defined as an iterative gluing construction using	\begin{enumerate}
\item a sequence of weight $(\mathsf m_n)_{n\geq1}$ defined as the increments of a Mittag-Leffler Markov chain $(\mathsf M_n)_{n\geq 1} \sim\MLMC(\frac{1}{b+1},\frac{a}{b+1})$, with $a>-1$ and $b>0$,
\item a sequence of scaling factors taken as $(\mathsf m_n^\gamma)_{n\geq 1}$ for some $\gamma>0$,
\item a sequence of independent blocks $(\bB_n,\bD_n,\rho_n,(X_{n,i})_{i\geq 1})$, with the same distribution for $n\geq 2$, such that their diameter admits a $p$-th moment with $p>1$.
\end{enumerate} 
Then $\cD$ is an almost-self-similar decoration with exponent $\frac{\gamma}{b+1}$ and base distributions $\xi_\emptyset$ and $\xi$ such that 
\begin{itemize}
	\item  $\displaystyle\xi_\emptyset \overset{(d)}{=}\left((\bB_1,S_\emptyset^\gamma\cdot \bD_1,\rho_1,(X_{1,i})_{i\geq 1}),(P_{i})_{i\geq 1}\right),$
with $(P_{i})_{i\geq 1}\sim \mathrm{GEM}\left(\frac{1}{b+1},\frac{a}{b+1}\right)$ that is independent of $\bB_1$ and $S_\emptyset$ is its $\frac{1}{b+1}$-diversity,
\item $\displaystyle\xi \overset{(d)}{=}\left((\bB_2,S^\gamma\cdot \bD_2,\rho_2,(X_{2,i})_{i\geq 1}),(\tilde{P}_{i})_{i\geq 1}\right),$
with $(\tilde{P}_{i})_{i\geq 1}\sim \mathrm{GEM}\left(\frac{1}{b+1},\frac{b}{b+1}\right)$ that is independent of $\bB_2$ and $S$ is its $\frac{1}{b+1}$-diversity.
\end{itemize}
\end{proposition}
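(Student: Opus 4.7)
The plan is to reinterpret the iterative gluing construction through the correspondence between weighted recursive trees and affine preferential attachment trees recalled in Section~\ref{growing:sec:pa and wrt}. Since the weights $(\mathsf m_n)_{n\geq 1}$ are distributed as the increments of $\MLMC(\frac{1}{b+1},\frac{a}{b+1})$ and the tree $(\ttT_n)_{n\geq 1}$ is conditionally a WRT given $(\mathsf m_n)_{n\geq 1}$, the joint distribution of tree and weights coincides with that of the PA tree $(\ttP_n)_{n\geq 1}$ with initial fitnesses $\mathbf{a}=(a,b,b,\dots)$ together with its limiting rescaled degrees $(\mathsf m_n^{\mathbf a})_{n\geq 1}$. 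Proposition~\ref{growing:prop:pa tree are split trees} then supplies a family $(\mathsf p_{ui})_{u\in\bU,i\geq 1}$ of independent GEM sequences, with the $\GEM(\frac{1}{b+1},\frac{a}{b+1})$ law at $u=\emptyset$ and the $\GEM(\frac{1}{b+1},\frac{b}{b+1})$ law at every $u\neq\emptyset$, together with the identity
\[
\mathsf m_n=\ell(u_n)=\left(\prod_{v\preceq u_n}\mathsf p_v\right)^{\frac{1}{b+1}}\cdot S_{u_n},
\]
where $\mathsf p_\emptyset:=1$ and $S_u$ denotes the $\frac{1}{b+1}$-diversity of $(\mathsf p_{ui})_{i\geq 1}$.

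Next, I would re-index the blocks by their position in the Ulam tree rather than by their order of creation. For each $u\in\bU$, let $N_u$ be the almost surely well-defined integer such that $u_{N_u}=u$, and set $(\bB_u,\bD_u,\rho_u,(X_{u,i})_{i\geq 1}):=(\bB_{N_u},\bD_{N_u},\rho_{N_u},(X_{N_u,i})_{i\geq 1})$. A short coupling argument exploiting the i.i.d.\ character of $((\bB_n,\bD_n,\rho_n,(X_{n,i})_{i\geq 1}))_{n\geq 2}$ and its independence from the tree growth shows that the reindexed family $((\bB_u,\bD_u,\rho_u,(X_{u,i})_{i\geq 1}))_{u\in\bU\setminus\{\emptyset\}}$ is itself i.i.d., with the common law of $(\bB_2,\bD_2,\rho_2,(X_{2,i})_{i\geq 1})$, and is jointly independent of the whole tree, hence of the entire family $(\mathsf p_u)_{u\in\bU}$. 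For $u=\emptyset$ we simply have $\bB_\emptyset=\bB_1$ since $u_1=\emptyset$ by convention.

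It then suffices to set $P_\emptyset:=1$ and $P_u:=\mathsf p_u$ for $u\neq\emptyset$, to define
\[
\xi_u:=\left((\bB_u,S_u^\gamma\cdot\bD_u,\rho_u,(X_{u,i})_{i\geq 1}),\,(\mathsf p_{ui})_{i\geq 1}\right),
\]
and to check two points. First, with exponent $\beta:=\gamma/(b+1)$, the almost-self-similar scaling $\lambda_u=(\prod_{v\preceq u}P_v)^{\beta}$ combined with the factor $S_u^\gamma$ built into the block reproduces exactly the iterative-gluing scaling $\mathsf m_{N_u}^\gamma=\ell(u)^\gamma$: this is immediate from the displayed identity above. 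Second, the family $(\xi_u)_{u\in\bU\setminus\{\emptyset\}}$ is i.i.d.\ with the distribution $\xi$ claimed in the proposition and $\xi_\emptyset$ has the other claimed distribution: this follows from the independence across $u$ of both the GEM sequences (Proposition~\ref{growing:prop:pa tree are split trees}) and the reindexed blocks, together with the facts that $S_u$ is a measurable function of $(\mathsf p_{ui})_{i\geq 1}$ and that $\bB_u$ is independent of these GEM sequences.

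The main technical subtlety will be justifying that the blocks reindexed by Ulam position remain jointly independent of the tree structure, despite $N_u$ being a non-trivial function of the tree growth. The key observation is that, conditionally on the tree, each $\bB_u$ is an independent draw whose law does not depend on the tree realisation, so that unconditionally the joint law factorises as required; the remaining verifications are then routine bookkeeping with Proposition~\ref{growing:prop:pa tree are split trees}.
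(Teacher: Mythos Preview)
Your proposal is correct and follows essentially the same approach as the paper: you invoke Proposition~\ref{growing:prop:pa tree are split trees} to obtain the independent $\GEM$ sequences $(\mathsf p_{ui})_{i\geq 1}$ and their diversities $S_u$, then define $\xi_u$ exactly as the paper does and use the identity $\ell(u)=\bigl(\prod_{v\preceq u}\mathsf p_v\bigr)^{1/(b+1)}S_u$ to recover the correct scaling. You are in fact more careful than the paper about the reindexing from creation order to Ulam position and the resulting independence of the reindexed blocks from the tree, a point the paper leaves implicit.
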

\begin{proof}
Recall the definition of $\mu$ the probability measure on $\partial\bU$ obtained as the weak limit of the mass measure of the weighted recursive tree used for this iterative construction.
If we denote for all $u\in\bU$ and $i\in\N$,
\begin{equation*}
\mathsf p_{ui}=\frac{\mu(T(ui))}{\mu(T(u))},
\end{equation*}
then from Proposition~\ref{growing:prop:pa tree are split trees} we have a complete description of the distribution of $\left(\mathsf p_u\right)_{u\in\bU}$ using $\mathrm{GEM}$ distributions. For every $u\in\bU$, we let $S_u$ be the $\frac{1}{b+1}$-diversity of the sequence $(\mathsf p_{ui})_{i\geq 1}$. 
Hence, denoting 
\begin{align*}
\xi_\emptyset&:=\left((\bB_1,S_\emptyset^\gamma\cdot \bD_1,\rho_1,(X_{1,i})_{i\geq 1}),(\mathsf p_{i})_{i\geq 1}\right)\\
\xi_{u_k}&:=\left((\bB_k,S_{u_k}^\gamma\cdot\bD_k,\rho_k,(X_{k,i})_{i\geq 1}),(\mathsf p_{u_ki})_{i\geq 1}\right)\qquad \forall k\geq 2.
\end{align*}
it is immediate from the previous section that the $\left(\xi_u\right)_{u\in\bU}$ are independent and $\left(\xi_u\right)_{u\in\bU\setminus \{\emptyset\}}$ are i.i.d.. Hence the distribution of $\cD$ coincides with that of an almost-self-similar decoration with scaling exponent $\frac{\gamma}{b+1}$ with these base distributions.
\end{proof}

\section{Application to models of growing random graphs}\label{growing:sec:applications}
Let us use this framework of random decorations to prove scaling limits for models of growing random graphs. We first present a general proof that will apply to all our different applications. 
Every example that we treat is of the following form: we start with a model of objects defined iteratively $\left(H_n\right)_{n\geq 1}$, that can be considered as measured metric spaces, in our case it will always be graphs.
We then interpret this construction in our framework of random decorations by constructing a sequence of decorations $(\cD^{(n)})_{n\geq 1}$ and measures on those decoration $(\bnu^{(n)})_{n\geq 1}$, such that 
the distribution of the sequence $\left(\sG(\cD^{(n)}, \bnu^{(n)})\right)_{n\geq 1}$ coincides with that of $\left(H_n\right)_{n\geq 1}$ as measured metric spaces. 
In the particular examples that we are studying, the evolution of the sequence of decoration $(\cD^{(n)})_{n\geq 1}$ is quite easy to understand and, in particular, we can prove that these decorations admit a scaling limit.
This allows us to use Theorem~\ref{growing:thm:continuité du recollement} on this sequence of decorations and hence obtain a convergence in the scaling limit for the sequence $\left(H_n\right)_{n\geq 1}$. 

\subsection{An abstract result that handles all our applications}
Let us describe a particular type of sequence of decoration $(\cD^{(n)})_{n\geq 1}$ for which we can state a general scaling limit result. This setting will be rather abstract but is intended to be general enough to encompass all of our examples.

In this setting, we study a sequence $(\cD^{(n)})_{n\geq 1}$ of decorations endowed with measures $(\bnu^{(n)})_{n\geq 1}$ which are constructed using a increasing sequence of trees $(\ttP_n)_{n\geq 1}$ and a collection of processes $(\mathcal{A}_k(m),m\geq0)_{k\geq 1}$ with values in $\M^{\infty \bullet}$ that are jointly independent and independent of $(\ttP_n)_{n\geq 1}$. We assume that the following properties hold.
\begin{enumerate}
	\item\label{growing:it:Pn preferential attachment} The sequence $(\ttP_n)_{n\geq 1}$ evolves as a preferential attachment tree with some sequence of fitnesses $\mathbf a=(a_n)_{n\geq 1}$, as described in Section~\ref{growing:subsec:definition pa wrt}, which satisfies \eqref{growing:eq:assum An=cn} for some $c>0$ and $0\leq c'<\frac{1}{c+1}$.
\item\label{growing:it:Dn processus en deguk} For all $n\geq 1$, the decoration $\cD^{(n)}$ is such that for all $k\in\{1,\dots,n\}$,
\begin{equation*}
	\cD^{(n)}(u_k)=\mathcal A_k(\deg_{\ttP_n}^+(u_k)).
\end{equation*}
and for all $u\notin \{u_1,\dots,u_n\}$, the associated block is trivial i.e. $\cD^{(n)}(u)=(\star,0,\star,(\star)_{i\geq 1})$.
\item\label{growing:it:convergence des processus A}There exists $\gamma>0$ such that for all $k\geq 1$,
\begin{align*}
m^{-\gamma}\cdot \cA_k(m) \overset{\text{a.s.}}{\underset{m\rightarrow\infty}{\longrightarrow}} (\bB_k, \bD_k,\bRho_k,(X_{k,i})_{i\geq 1}) \quad \text{in } \M^{\bullet \infty}.
\end{align*} 
\item\label{growing:it:moment condition on the sup of A} There exists a sequence $(c_k)_{k\geq 1}$ such that for all $p>0$ we have
\begin{equation*}
\sup_{k\geq 1} \Ec{\sup_{m\geq 1}\left(\frac{\diam(\cA_k(m))}{(m+c_k)^{\gamma}}\right)^p}<\infty,
\end{equation*}
and the sequence $(c_k)_{k\geq 1}$ is such that $c_k\leq k^{s+\petito{1}}$ for some $s<\frac{1}{c+1}$. 
\item\label{growing:it:measures converge} The sequence of measures $(\nu^{(n)})_{n\geq 1}$ on $\bU$ that corresponds to the sequence of measure-valued decorations $(\bnu^{(n)})_{n\geq 1}$ almost surely converges towards the probability measure $\mu$ on $\partial\bU$ which is associated by Proposition~\ref{growing:prop:convergence measures on wrt} to the sequence of preferential attachment trees $(\ttP_n)_{n\geq 1}$. 

\end{enumerate}

\begin{remark}
	The assumptions \ref{growing:it:Pn preferential attachment} and \ref{growing:it:Dn processus en deguk} characterize the law of the process $(\cD^{(n)})_{n\geq 1}$ from the ones of the processes $\cA_k$ for $k\geq 1$ and the sequence $\mathbf{a}$.
	On the contrary, the details of the measures $\bnu^{(n)}$ on the different blocks are not specified and can be anything as long as the associated measures $\nu^{(n)}$ on $\bU$ converge towards $\mu$. 
\end{remark}

Under all those assumptions we have a scaling limit result. For a sequence $\mathbf{a}=(a_n)_{n\geq 1}$, recall the definition of the random sequence $(\mathsf{m}_n^\mathbf{a})_{n\geq 1}$ from \eqref{growing:eq:convergence of degrees pa}.
\begin{theorem}\label{growing:thm:metatheorem}
	Suppose that the decorations $\cD^{(n)}$ are constructed as above. Then we have the following almost sure convergence
	\begin{align*}
	\sG(n^{-\frac{\gamma}{c+1}}\cdot \cD^{(n)}, \bnu^{(n)}) \underset{n\rightarrow\infty}{\longrightarrow} \sG(\cD,\mu),
	\end{align*}
	 in the Gromov--Hausdorff--Prokhorov topology, where the limit is described as an iterative construction with blocks $(\bB_k, \bD_k,\bRho_k,(X_{k,i})_{i\geq 1})_{k\geq 1}$, scaling factors $((\mathsf m^\mathbf a_k)^\gamma)_{k\geq 1}$ and weights $(\mathsf m^\mathbf a_k)_{k\geq 1}$.
\end{theorem}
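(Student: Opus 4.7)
The strategy is to apply Theorem~\ref{growing:thm:continuité du recollement} to the sequence of rescaled decorations $\tilde{\cD}^{(n)} := n^{-\gamma/(c+1)}\cdot\cD^{(n)}$ equipped with the measures $\bnu^{(n)}$. We must check almost surely: (i) for each fixed $u\in\bU$, the rescaled block $\tilde{\cD}^{(n)}(u)$ converges in the infinitely pointed Gromov--Hausdorff topology to a limiting block $\cD_\infty(u)$; (ii) the function $u\mapsto \sup_{n\geq 1}\diam(\tilde{\cD}^{(n)}(u))$ is non-explosive; (iii) the measures $\nu^{(n)}$ converge weakly on $\overline{\bU}$ to a probability measure supported on $\partial\bU$. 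Only the vertices of the form $u=u_k$ carry non-trivial blocks.

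\textbf{Blockwise convergence.} For $u=u_k$, we write
\[
\tilde{\cD}^{(n)}(u_k) = \left(\frac{\deg^+_{\ttP_n}(u_k)}{n^{1/(c+1)}}\right)^{\gamma}\cdot \bigl(\deg^+_{\ttP_n}(u_k)\bigr)^{-\gamma}\cdot \cA_k\bigl(\deg^+_{\ttP_n}(u_k)\bigr).
\]
The scalar prefactor converges a.s. to $(\mathsf m^{\mathbf a}_k)^\gamma>0$ by~\eqref{growing:eq:convergence of degrees pa}, which also implies $\deg^+_{\ttP_n}(u_k)\to\infty$. Assumption~\ref{growing:it:convergence des processus A} then yields convergence of the trailing factor to $(\bB_k,\bD_k,\bRho_k,(X_{k,i})_{i\geq 1})$. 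Hence $\tilde{\cD}^{(n)}(u_k)\to (\mathsf m^{\mathbf a}_k)^\gamma\cdot(\bB_k,\bD_k,\bRho_k,(X_{k,i})_{i\geq 1})$ in the infinitely pointed Gromov--Hausdorff topology, which is precisely the block prescribed at $u_k$ by the iterative gluing construction appearing in the theorem.

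\textbf{Non-explosion.} Set $R_k := \sup_{m\geq 1} \diam(\cA_k(m))/(m+c_k)^\gamma$. By~\ref{growing:it:moment condition on the sup of A}, $R_k$ has uniformly bounded $p$-th moment for every $p>0$, so Markov together with Borel--Cantelli yields $R_k=k^{o_\omega(1)}$ a.s. Since $u_k\in\ttP_n$ only for $n\geq k$, combining~\eqref{growing:eq:controle degnuk} with $c_k\leq k^{s+o(1)}$ gives
\begin{align*}
\sup_{n\geq 1}\diam\bigl(\tilde{\cD}^{(n)}(u_k)\bigr)
&\leq R_k\cdot\left(\sup_{n\geq k}n^{-1/(c+1)}\bigl(\deg^+_{\ttP_n}(u_k)+c_k\bigr)\right)^\gamma\\
&\leq k^{o_\omega(1)}\cdot\left((k+1)^{c'-1/(c+1)+o_\omega(1)} + k^{s-1/(c+1)+o(1)}\right)^\gamma\\
&\leq k^{\gamma(\max(c',s)-1/(c+1))+o_\omega(1)},
\end{align*}
with strictly negative exponent since $\max(c',s)<1/(c+1)$. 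Combined with the logarithmic height bound~\eqref{growing:eq:height in log}, Lemma~\ref{growing:lem:sufficient condition for non-explosion} gives non-explosion.

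\textbf{Measures and identification of the limit.} Assumption~\ref{growing:it:measures converge} together with Proposition~\ref{growing:prop:convergence measures on wrt} shows that $\nu^{(n)}\to\mu$ weakly on $\overline{\bU}$, with $\mu$ carried on $\partial\bU$. Theorem~\ref{growing:thm:continuité du recollement} then delivers GHP convergence of $\sG(\tilde{\cD}^{(n)},\bnu^{(n)})$ to $\sG(\cD_\infty,\mu)$. To identify the latter with the stated iterative gluing construction, invoke the representation theorem recalled in Section~\ref{growing:sec:pa and wrt}: conditionally on $(\mathsf m^{\mathbf a}_k)_{k\geq 1}$, the tree $(\ttP_n)$ is $\wrt((\mathsf m^{\mathbf a}_k)_{k\geq 1})$-distributed and is independent of the blocks $(\bB_k,\bD_k,\bRho_k,(X_{k,i})_{i\geq 1})_{k\geq 1}$, which exactly matches the data of the iterative gluing construction with weights $(\mathsf m^{\mathbf a}_k)_{k\geq 1}$ and scaling factors $((\mathsf m^{\mathbf a}_k)^\gamma)_{k\geq 1}$. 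The delicate point in the whole argument is the non-explosion bookkeeping: it is the observation that $u_k$ exists only from step $n=k$ onwards that turns the $n^{-1/(c+1)}c_k$ contribution into a negative-exponent function of $k$, and without the hypothesis $s<1/(c+1)$ the estimate would fail.
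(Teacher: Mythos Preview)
Your proof is correct and follows essentially the same approach as the paper's: the same factorisation $n^{-\gamma/(c+1)}\cdot\cA_k(\deg^+_{\ttP_n}(u_k))=(n^{-1/(c+1)}\deg^+_{\ttP_n}(u_k))^\gamma\cdot(\deg^+_{\ttP_n}(u_k))^{-\gamma}\cA_k(\deg^+_{\ttP_n}(u_k))$ for blockwise convergence, the same diameter decomposition combined with \eqref{growing:eq:controle degnuk}, Markov--Borel--Cantelli and Lemma~\ref{growing:lem:sufficient condition for non-explosion} for non-explosion, and the same appeal to Theorem~\ref{growing:thm:continuité du recollement}. Your write-up is in fact slightly more explicit than the paper's in two places: you compute the non-explosion exponent as $\gamma(\max(c',s)-1/(c+1))$ rather than just ``some $\epsilon>0$'', and you spell out the identification of the limit with the iterative gluing construction via the $\wrt$ representation of $(\ttP_n)$ conditional on $(\mathsf m^{\mathbf a}_k)$.
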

\begin{proof}
First we check that for all $k\geq 1$ and all $n\geq k$ we have 
\begin{align*}
n^{-\frac{\gamma}{c+1}}\cdot \cD^{(n)}(u_k)&=n^{-\frac{\gamma}{c+1}}\cdot \mathcal A_k(\deg_{\ttP_n}^+(u_k))\\
&=\left(n^{-\frac{1}{c+1}}\deg_{\ttP_n}^+(u_k)\right)^\gamma\cdot (\deg_{\ttP_n}^+(u_k))^{-\gamma} \cdot \mathcal A_k(\deg_{\ttP_n}^+(u_k))\\
&\underset{n\rightarrow\infty }{\longrightarrow} (\mathsf m ^\mathbf a_k)^\gamma\cdot (\bB_k, \bD_k,\bRho_k,(X_{k,i})_{i\geq 1}),
\end{align*}
in the topology of $\M^{\infty\bullet}$, using the assumption \ref{growing:it:convergence des processus A}, the convergence \eqref{growing:eq:convergence of degrees pa} of degrees in the sequence of increasing trees $(\ttP_n)_{n\geq 1}$, which is a preferential attachment tree by the assumption \ref{growing:it:Pn preferential attachment}. 
By \ref{growing:it:convergence des processus A}, for any $u\notin\{u_1,u_2,\dots\}$ the block $\cD^{(n)}(u)$ is constant equal to the trivial block. So $n^{-\frac{\gamma}{c+1}}\cdot\cD^{(n)}$ converges to some limiting decoration $\cD$ in the product topology.

Now, for any $k\geq 1$ and $n\geq k$ we have
\begin{align*}
	\diam\left(n^{-\frac{\gamma}{c+1}}\cdot \cD^{(n)}(u_k)\right)=\left(n^{-\frac{1}{c+1}}\left(\deg_{\ttP_n}^+(u_k)+c_k\right)\right)^\gamma\cdot\frac{\diam (\mathcal A_k(\deg_{\ttP_n}^+(u_k)))}{(\deg_{\ttP_n}^+(u_k)+c_k)^{\gamma}}.
\end{align*}
The first term can be shown to be smaller than some random bound $(k+1)^{-\epsilon+o_\omega(1)}$ uniformly in $n\geq k$, for some $\epsilon>0$, using \eqref{growing:eq:controle degnuk} and the assumption that $c_k\leq k^{s+\petito{1}}$ for $s<\frac{1}{c+1}$. The second term is bounded above by some $k^{o_\omega(1)}$ thanks to \ref{growing:it:moment condition on the sup of A} (using the Markov inequality and the Borel-Cantelli lemma). 
In the end, we almost surely have the following control
\begin{align}
		\diam\left(n^{-\frac{\gamma}{c+1}}\cdot \cD^{(n)}(u_k)\right)\leq k^{-\epsilon+o_\omega(1)}.
\end{align}
Thanks to \eqref{growing:eq:height in log}, the height of $\ttP_n$ is almost surely bounded above by some $K\log n$ for some constant $K$, so Lemma~\ref{growing:lem:sufficient condition for non-explosion} ensures that the function $\ell:u\mapsto \sup_{n\geq 1}\diam\left(n^{-\frac{\gamma}{c+1}}\cdot \cD^{(n)}(u)\right)$ is almost surely non-explosive.  
Thanks to Theorem~\ref{growing:thm:continuité du recollement}\ref{growing:it:convergence gh}, this ensures the almost sure Gromov-Hausdorff convergence of the spaces $\sG(n^{-\frac{\gamma}{c+1}}\cdot \cD^{(n)})$ towards $\sG(\cD)$.

Finally, assumption \ref{growing:it:measures converge} ensures that we are in the conditions of application of Theorem~\ref{growing:thm:continuité du recollement}\ref{growing:it:convergence ghp} and so we can improve the last convergence into the GHP convergence 
	\begin{align*}
\sG(n^{-\frac{\gamma}{c+1}}\cdot \cD^{(n)}, \bnu^{(n)}) \underset{n\rightarrow\infty}{\longrightarrow} \sG(\cD,\mu),
\end{align*}
 which finishes our proof.

\end{proof}
\paragraph{How to apply this theorem.}
This theorem may seem abstract at that point, but it encompasses all our specific examples of growing random graphs. 
Now for all our sequences of graphs $(H_n)_{n\geq 1}$, the goal will be to provide a sequence $(a_n)_{n\geq 1}$ satisfying \eqref{growing:eq:assum An=cn} for some parameters $c$ and $c'$ and processes $(\cA_k)_{k\geq1}$ so that the decorations $(\cD^{(n)})_{n\geq 1}$ satisfying \ref{growing:it:Pn preferential attachment} and \ref{growing:it:Dn processus en deguk} indeed evolve in such a way that $\sG(\cD^{(n)})$ coincides with our process $(H_n)_{n\geq 1}$.
Then we check that the other assumptions are also satisfied in order to get the scaling limit.

\paragraph{Particular form of processes $\cA$.}
First, remark that for any $k\geq 1$ and $m\geq 0$, all the distinguished points in $\cA_k(m)$ that matter for the construction are only the first $m$ ones. 
All the others can be set equal to the root vertex without changing the distribution of $\left(\sG(\cD^{(n)})\right)_{n\geq 1}$, so we can always suppose that at each step $m\geq 0$, the metric space $\cA_k(m)$ is endowed with only $m$ distinguished points in addition to the root and can hence be seen as an element of $\M^{m \bullet}$.

Second, in all our examples, the different processes $\cA_k$ for $k\geq 0$ all evolve under the same Markovian transitions, possibly starting from different states $\cA_k(0)$ for different values of $k\geq 1$. 
These transitions are often more naturally defined on \emph{weighted graphs}, in which each of the vertices and edges are given some weight. The dynamics involve taking a vertex or edge at random proportionally to its weight, do some local transformation of the graph at that point by possibly adding one or several vertices and edges to the graph. 
The list of distinguished points is then updated by appending some vertex to the end of the existing list. 
\paragraph{Almost self-similar limits.}
Theorem~\ref{growing:thm:metatheorem} describes the limiting space as the result of an iterative construction. In our examples, it will be often the case that Proposition~\ref{growing:prop:iterative gluing constructions can be ass} applies to the limiting space and hence that it is almost-self-similar in the sense of Section~\ref{growing:subsec:the self-similar case}. 
It happens in particular whenever the sequence $\mathbf{a}$ is of the form $\mathbf a=(a,b,b,\dots)$ and all the processes $(\cA_k)_{k\geq 2}$ have the same law.
We will not discuss further this type of construction and it is left to the reader to apply Proposition~\ref{growing:prop:iterative gluing constructions can be ass} when possible to obtain this other description of the limiting object. 

\subsection{Generalised Rémy's algorithm}
Recall the construction described in the introduction. Consider $(G_n,o_n)_{n\geq 1}$ a sequence of finite rooted graphs with number of edges given by the sequence $\mathbf a=(a_n)_{n\geq 1}$ which satisfies \eqref{growing:eq:assum An=cn} for some $c>0$ and $c'<\frac{1}{c+1}$. 
We construct the sequence $(H_n)_{n\geq 1}$ recursively as follows. 
Let $H_1=G_1$. 
Then, for any $n\geq 1$, conditionally on the structure $H_n$ already constructed, take an edge in $H_n$ uniformly at random, split it into two edges by adding a vertex "in the middle" of this edge, and glue a copy of $G_{n+1}$ to the structure by identifying $o_{n+1}$ the root vertex of $G_{n+1}$ with the newly created vertex. 
Call the obtained graph $H_{n+1}$. 
See Figure~\ref{growing:fig:decomposition le long d'un arbre} for a realisation of $H_5$ using the sequence $(G_n)_{n\geq 1}$ of Figure~\ref{growing:fig:exemple graphes gn}. 

\paragraph{Uniform edge-splitting process.}
Before decomposing this construction as a process on decorations on the Ulam tree, let us introduce a simpler process.
For any connected, rooted graph $(G,\rho)$ with at least one edge, we introduce the following process $(\cA_G(n))_{n\geq 0}$, called the \emph{uniform edge-splitting} process started from $G$. 
The initial value for the process $\cA_G(0)$ is just (the set of vertices of) the graph $G$ endowed the corresponding graph distance, rooted at $o$, with an empty list of distinguished points.
Then $\cA_G(n+1)$ is obtained from $\cA_G(n)$ by duplicating an edge uniformly at random by adding some point $x_{n+1}$ in its centre. The vertex $x_{n+1}$ is then appended at the end of the list of distinguished points, now becoming of length $n+1$. At every step, the obtained object \[\cA_G(n)=(A_G(n),\dist_{\mathrm{gr}},\rho,(x_i)_{1\leq i\leq n})\] can then be considered as an element of $\M^{\bullet n}$, a metric space with $n$ distinguished points, which we also see as an element of $\M^{\bullet \infty}$ by the usual identification. By construction, $\cA_G(n)$ is also a graph, and we will sometimes also consider it as such.

We introduce $\cC_G$ a continuous version of $G$ as a random element of $\M^{\bullet \infty}$,
\begin{equation*}
	\cC_G=(C_G,\dist,\rho,(X_i)_{i\geq 1}),
\end{equation*}
that is constructed in the following way. If we arbitrarily label $e_1,\dots,e_{\abs{E(G)}}$ the edges of $G$, then $\cC_G$ is obtained from $G$ by replacing each edge $e$ with a segment of length $L(e)$ where the lengths are such that
\begin{equation}
(L(e_1),L(e_2),\dots,L(e_{\abs{E(G)}}))\sim \mathrm{Dir}(1,1,\dots,1),
\end{equation} 
so that the total length is $1$. The $(X_{i})_{i\geq 1}$ are then obtained conditionally on this construction as i.i.d.\ points taken under the length measure.

We have the following convergence result for graphs undergoing a uniform edge-splitting process.
\begin{lemma}[Convergence of the uniform edge-splitting process]\label{growing:lem:convergence edge-splitting}
	Suppose $(G,\rho)$ is a connected, rooted graph with at least one edge. If we consider the process $(\mathcal{A}_G(n))_{n\geq 0}$ defined as above as a process on pointed metric spaces $(A_G(m),\dist_{\mathrm{gr}},\rho,(x_i)_{1\leq i\leq m})$ then we have the following convergence in $\M^{\infty \bullet}$ as $m\rightarrow\infty$,
	\[(A_G(n),\frac{1}{n}\dist_{\mathrm{gr}},\rho,(x_i)_{1\leq i\leq n}) \underset{n\rightarrow\infty}{\longrightarrow} \cC_G=(C_G,\dist,\rho,(X_i)_{i\geq 1}),\]
	where $\cC_G$ is described as above. 
\end{lemma}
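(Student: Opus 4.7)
The proof rests on two Pólya urn arguments: one controlling the sizes of the \emph{lineages} (sub-edges descending from each original edge of $G$), and a second one controlling the positions of the distinguished points within each lineage.

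First, label the edges of $G$ as $e_1,\dots,e_{a_1}$ with $a_1=\abs{E(G)}$. For each $n\geq 0$ and $1\leq j\leq a_1$, let $L_n(j)$ be the number of sub-edges of $A_G(n)$ descending from $e_j$, starting from $L_0(j)=1$. Since at each step the algorithm picks a sub-edge uniformly among the $n+a_1$ present, the vector $(L_n(j))_{1\leq j\leq a_1}$ evolves as a classical Pólya urn with $a_1$ colours and initial composition $(1,\dots,1)$. The standard martingale argument yields the almost sure convergence of the rescaled sizes $L_n(j)/n$ to a random vector $(L(e_j))_{1\leq j\leq a_1}\sim\mathrm{Dir}(1,\dots,1)$, which will be the edge lengths of the limiting space $\cC_G$. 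Each lineage, seen as a path of $L_n(j)$ unit-length sub-edges, becomes after rescaling by $1/n$ a path of length $L_n(j)/n$ whose individual sub-edges have length $1/n\to 0$, hence converges in the Gromov--Hausdorff sense to a segment of length $L(e_j)$. Gluing these segments according to the fixed topology of $G$ gives the almost sure Gromov--Hausdorff convergence of $(A_G(n),\frac{1}{n}\dist_{\mathrm{gr}},\rho)$ to $(C_G,\dist,\rho)$.

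For the distinguished points, fix $k\geq 1$. Each $x_i$ is created inside some lineage $c_i\in\{1,\dots,a_1\}$, and since at step $i$ a sub-edge is chosen uniformly, $\Pp{c_i=j\mid \cF_{i-1}}=L_{i-1}(j)/(i-1+a_1)$. Now zoom into a fixed lineage: once $r$ distinguished points have been placed inside it, they split its underlying path into $r+1$ gaps, each initially containing one sub-edge. Further splits in this lineage hit each gap with probability proportional to its number of sub-edges, which is exactly a Pólya urn with $r+1$ colours starting from $(1,\dots,1)$. Its renormalised gap vector therefore converges almost surely to a $\mathrm{Dir}(1,\dots,1)$ distribution, which is precisely the law of the gaps produced by $r$ i.i.d.\ uniform points on $[0,1]$. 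Combining with the first Pólya urn for lineage sizes, one concludes that, conditionally on $(L(e_j))_{j}$, the limiting positions of $(x_i)_{1\leq i\leq k}$ in $\cC_G$ are i.i.d.\ samples of the length measure: the lineage $c_i$ is chosen with probabilities $(L(e_j))_{j}$, and the position within the corresponding segment is uniform.

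Putting everything together yields the almost sure convergence in $\M^{k\bullet}$ for every fixed $k$, and hence in $\M^{\infty\bullet}$ by definition of the topology. The delicate point requiring care is the justification that the within-lineage Pólya urn and the between-lineage Pólya urn combine to give the claimed i.i.d.-uniform limit: one must run the two urns simultaneously, couple them on a common probability space, and check that the joint almost sure limit recovers the sampling procedure defining the $(X_i)_{i\geq 1}$. The remaining ingredients, namely standard almost-sure Pólya-urn convergence and the translation of these convergences into the infinitely pointed Gromov--Hausdorff topology, are routine.
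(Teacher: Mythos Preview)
Your approach is essentially the same as the paper's: both arguments rest on the classical P\'olya urn for the lineage sizes $(L_n(j))_j$ to obtain the $\mathrm{Dir}(1,\dots,1)$ edge lengths, and a second urn-type argument for the positions of the distinguished points. The only minor difference is in the second step: the paper tracks, for each individual $x_i$, the two-colour left/right P\'olya urn along its lineage to get almost sure convergence of its position, and then invokes the exchangeability of the labels $x_1,\dots,x_n$ at every time $n$ to conclude that the limits are i.i.d.\ under the length measure; you instead run the $(r+1)$-colour gap urn, which directly yields $\mathrm{Dir}(1,\dots,1)$ gaps and hence uniform order statistics, but then you must still argue that the creation-order permutation is uniform and independent of the gap sizes (this is the ``delicate point'' you flag). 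That missing observation is easy---after the $r$-th lineage step the gap urn restarts from $(1,\dots,1)$ regardless of the permutation, and the permutation itself is uniform by a one-line induction---so your sketch is correct and at the same level of detail as the paper's.
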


\begin{proof}
We give here a sketch of the proof of the statement. 
For any edge $e$ of the original graph $G=(V,E)$, and any $n\geq 1$, we say that the edges in $\cA_G(n)$ that were created through splitting events along this edge $e$ \emph{originate} from $e$. 
As illustrated in Figure~\ref{growing:fig:edge-splitting process}, where the blue edges in $\cA_G(6)$ originate from $e$, all the edges originating from $e$ form a path in the graph $\cA_G(n)$. Let us denote $L(e,n)$ the number of edges in that path.
For an arbitrary labelling $e_1,e_2,\dots,e_{\abs{E}}$ of the edges, it is easy to see that the vector $(L(e_1,n),L(e_2,n),\dots,L(e_{\abs{E}},n))$ evolves as the weights of different colours in a P\'olya urn, as described in Theorem~\ref{growing:thm:classical Polya urn}. From this theorem, we then have the almost sure convergence 
\begin{align*}
	\frac{1}{n}\cdot (L(e_1,n),L(e_2,n),\dots,L(e_{\abs{E}},n))\underset{n\rightarrow\infty}{\longrightarrow} (L(e_1),L(e_2),\dots,L(e_{\abs{E}})),
\end{align*}
where the limit has a Dirichlet distribution $\mathrm{Dir}(1,1,\dots,1)$. This is enough to prove the convergence of the graphs $\cA_G(n)$ to the limiting metric space $\cC_G$ as rooted metric spaces. 
The convergence of the position of the points $x_1,x_2,x_3,\dots$ along their respective path is also obtained by an urn interpretation: whenever a vertex $x_i$ is created along the path originating from an edge $e$, the number of edges along that path on the left and on the right of $x_i$ evolves also like a (time-changed) P\'olya urn and hence once rescaled the position of the point along that edge converges almost surely to some point $X_i$ in the limiting space. 
Last, we have to prove that the sequence $(X_i)_{i\geq 1}$ is i.i.d.\ uniform along the length of the limiting structure $\cC_G$. This follows from the fact that for any time $n\geq 1$, the labels $x_1,x_2,\dots,x_n$ of the vertices created in the process are exchangeable. 
\end{proof}
\begin{figure}
	\centering
	\begin{tabular}{ccc}
		\subfloat[The initial state $\cA_G(0)=G$]{\includegraphics[height=3cm,page=1]{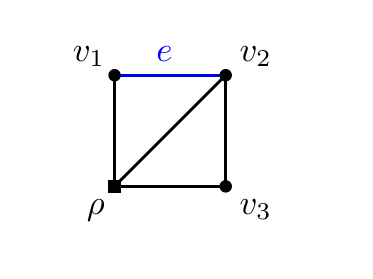}} &\subfloat[A realisation of $\cA_G(6)$]{\includegraphics[height=3cm,page=2]{Figures/GrowingGraphs/edge-splitting_process.pdf}} &\subfloat[The limiting space $\mathcal{C}_G$]{\includegraphics[height=3cm,page=3]{Figures/GrowingGraphs/edge-splitting_process.pdf}}
	\end{tabular}
\caption{An illustration of the edge-splitting process started from some graph $G$.}
\label{growing:fig:edge-splitting process}
\end{figure}

\paragraph{Construction as a gluing of decorations.}
Let us now provide a construction of a sequence $(\cD^{(n)})_{n\geq 1}$ of decorations, endowed with measures $(\bnu^{(n)})_{n\geq1}$, that satisfies the assumptions of Theorem~\ref{growing:thm:metatheorem} and for which the process $(\sG(\cD^{(n)},\bnu^{(n)}))_{n\geq 1}$ coincides with $(H_n)_{n\geq 1}$ endowed with its graph distance and the uniform measure on its vertices.  
For this, let $(\ttP_n)_{n\geq 1}$ be a preferential attachment tree with fitnesses $(a_n)_{n\geq1}$ and let the processes $\cA_k$ for $k\geq1$ be independent with the same law as $\cA_{G_k}$, defined in the above paragraph. 
Now, consider the measures $\bnu^{(n)}$ such that for all $u\in\bU$, $\nu^{(n)}_{u}$ charges every point of $\cD^{(n)}(u)$ except its root if $u\neq \emptyset$, with the same mass, normalised in such a way that the associated measure $\nu^{(n)}$ on the Ulam tree is a probability measure.
It is now an exercise to check that the sequence of graphs $(H_n)_{n\geq 1}$ seen as measured metric spaces has the same distribution as $(\sG(\cD^{(n)},\bnu^{(n)}))_{n\geq 1}$.
\paragraph{Applying the theorem.}
Assumptions \ref{growing:it:Pn preferential attachment} and \ref{growing:it:Dn processus en deguk} are satisfied by construction, so let us verify that the other ones hold as well. 
The convergence \ref{growing:it:convergence des processus A} is obtained for $\gamma=1$ by the result of Lemma~\ref{growing:lem:convergence edge-splitting}, so that for any $k\geq 1$, the limiting block $(\bB_k, \bD_k,\bRho_k,(X_{k,i})_{i\geq 1})$ has the distribution of $\cC_{G_k}=(C_G,\dist,\rho,(X_i)_{i\geq 1})$.
The control \ref{growing:it:moment condition on the sup of A} is immediate with $(c_k)_{k\geq 1}=(a_k)_{k\geq 1}$ because the diameter of a graph is smaller than its number of edges so we have the deterministic upper-bound for all $k\geq 1$ and $m\geq 0$, $\diam(\cA_k(m))\leq a_k+m$. 

The last point \ref{growing:it:measures converge} is obtained by checking that the measure $\nu^{(n)}$ has the form \eqref{growing:eq:mesures par les degrés}. 
Indeed, let $(b_n)_{n\geq 1}$ be defined such that $b_1$ is the number of vertices of $G_1$ and for $n\geq 2$, $b_n$ is the number of vertices minus $1$ of the graph $G_n$. 
Then the measures $\nu^{(n)}$ on the Ulam tree are probability measures of the form $\nu^{(n)}(u_k)\propto b_n+\deg_{\ttP_n}^+(u_k)$ for all $k\leq n$ and $\nu^{(n)}(u)=0$ on other vertices $u$.
Because the graphs $G_n$ for $n\geq 1$ are connected, their number of vertices is smaller than their number of edges minus $1$, so that for all $n\geq 1$ we have $b_n\leq a_n$ which is enough to check that $\nu^{(n)}$ is of the form \eqref{growing:eq:mesures par les degrés}. 

In the end, this yields a proof of Proposition~\ref{growing:prop:convergence rémy généralisé}.

\subsection{Generalised Rémy's algorithm, version 2}
\begin{figure}
	\begin{center}
		\begin{tabular}{cc}
			\subfloat[A realisation of $H_5$, where the leaves have been labelled by their time of creation]{\includegraphics[height=6cm,page=1]{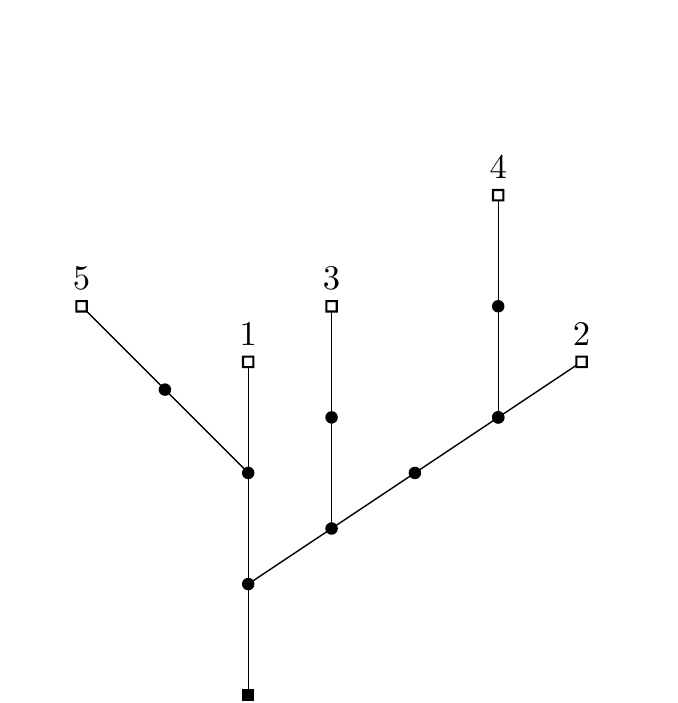}\label{growing:subfig:double edge remy}} & \subfloat[Its decomposition of the second type]{\includegraphics[height=6cm,page=2]{Figures/GrowingGraphs/double-edge_remy_algo2.pdf}\label{growing:subfig:double edge remy decompostion}} \\
		\end{tabular}
		\caption{A realisation of the tree $H_5$ and its decomposition as a decoration}\label{growing:fig:double edge remy} 
	\end{center}
\end{figure}
Let us consider the former sequence of graphs $(H_n)_{n\geq1}$ constructed from a particular sequence of graphs $(G_n)_{n\geq 1}$ with $G_1$ equal to the single-edge graph and constant starting from the second term, equal to a line with two edges, rooted at one end.
This time, we are going to use a different decomposition of this model, which will lead to another description of the limit, which will this time be described as an iterative gluing of rescaled i.i.d.\ Brownian trees. 

We decompose this process along a decoration in a slightly different way than before. Indeed, every time that we glue a new copy of the two-edge-line graph to the structure, we consider that the lower edge is a part of the block to which it attached, and only the upper edge is the newly created block of the decoration (see Figure~\ref{growing:fig:double edge remy}). 

In this decomposition, every time that an edge belonging to some block is selected by the algorithm, the graph corresponding to that block undergoes a step of the original Rémy's algorithm ,and hence its number of edges increases by two.
In order to stay in a setting where the weight of each block is reinforced by one every time that it is selected, we now see every edge as having a weight $\frac{1}{2}$. 
From this observation (and Figure~\ref{growing:fig:double edge remy}), we take $(a_n)_{n\geq 1}=(\frac12,\frac12,\dots)$ and independent processes $(\cA_k)_{k\geq1}$ that all have the same distribution as a the standard Rémy algorithm started from a single edge, such that the distinguished points correspond to the added leaves in order of creation, and on can check that the corresponding sequence of decorations $(\cD^{(n)})_{n\geq 1}$ actually describes the process $(H_n)_{n\geq 1}$.

We also add the measures $\bnu^{(n)}$ in the same way as before, that is to say that for all $n\ge1$, over all $u\in \bU$, the measure $\nu_u^{(n)}$ charges every vertex (say, including the leaves, and excluding the root if $u\neq \emptyset$) with the same mass, in such a way that for any $n\geq 1$ the sum over $u\in\bU$ of the total mass of the $\nu_u^{(n)}$'s is $1$. In this way, the measured metric space $\sG(\cD^{(n)},\bnu^{(n)})$ coincides with $H_n$ endowed with its graph distance and its uniform measure on the vertices.

Using \cite[Theorem~5]{curien_stable_2013}, for any $k\geq 1$, we have the following almost sure convergence
\begin{align}
m^{-\frac{1}{2}}\cdot\cA_k(m)\underset{m\rightarrow\infty}{\rightarrow} (\bB_k,\bD_k, \bRho_k, \left(X_{k,i}\right)_{i\geq 1}) \qquad \text{in } \M^{\infty \bullet},
\end{align}
where the limiting metric space $(\bB_k,\bD_k, \bRho_k, \left(X_{k,i}\right)_{i\geq 1})$ has the distribution of ($2$ times) the Brownian tree, endowed with an i.i.d. sequence of points taken under its mass measure.
The condition \ref{growing:it:moment condition on the sup of A} is satisfied thanks to Lemma~\ref{growing:lem:sup hauteur algo remy queue gaussienne}, proved in the appendix.
It is easy to check that the measure have the form \eqref{growing:eq:mesures par les degrés}, so that \ref{growing:it:measures converge} is satisfied and so Theorem~\ref{growing:thm:metatheorem} applies, which proves the following proposition.
\begin{proposition}\label{growing:prop:proposition generalised remy version 2}
	Under these conditions, we have the following almost sure convergence in Gromov--Hausdorff--Prokhorov topology
	\begin{align*}
			(H_n,n^{-\frac{2}{3}}\cdot\dist_{\mathrm{gr}},\mu_{\mathrm{unif}})\underset{n\rightarrow\infty}{\longrightarrow} \cH.
	\end{align*}
	The limiting space $\cH$ can be constructed by an iterative gluing construction using scaling factors $((\mathsf m^\mathbf a_n)^{\frac12})_{n\geq 1}$ and weights $(\mathsf m^\mathbf{a}_n)_{n\geq 1}$, where the sequence $(\mathsf m^\mathbf{a}_n)_{n\geq 1}$ has the distribution of the increments of a $\MLMC\left(\frac{2}{3},\frac{1}{3}\right)$ process, and i.i.d.\ blocks $((\bB_k,\bD_k, \bRho_k, \left(X_{k,i}\right)_{i\geq 1}))_{k\geq 1}$ that have the distribution of $2$ times the Brownian tree endowed with a sequence of i.i.d.\ leaves taken under its mass measure.
\end{proposition}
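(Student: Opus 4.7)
The plan is to apply Theorem~\ref{growing:thm:metatheorem} to the sequence of decorations $(\cD^{(n)})_{n \geq 1}$ just constructed. From the setup one reads off the parameters of the theorem: the fitness sequence is $\mathbf{a} = (1/2, 1/2, \dots)$, so $A_n = n/2$ and hence $c = 1/2$, $c' = 0$, which satisfies $c' < 1/(c+1)$; and the block-scaling exponent is $\gamma = 1/2$, coming from the Brownian tree scaling limit of Rémy's algorithm.

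Assumption~\ref{growing:it:Pn preferential attachment} is verified by a direct edge count: after $d$ selections, block $u_k$ contains $2d+1$ edges, so the probability that the next step of the algorithm falls in $u_k$ is proportional to $d+1/2$, which is the affine preferential attachment rule with constant fitness $1/2$. Assumption~\ref{growing:it:Dn processus en deguk} holds by construction, since each selection of $u_k$ performs exactly one Rémy step on the block and records the newly added leaf as the next distinguished point of $\cA_k$.

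For~\ref{growing:it:convergence des processus A}, I would invoke \cite[Theorem~5]{curien_stable_2013}, which gives the almost sure Gromov--Hausdorff convergence of $m^{-1/2}\cdot \cA_k(m)$ to a multiple of the Brownian CRT. To upgrade this to convergence in the infinitely pointed topology of $\M^{\infty\bullet}$ with the claimed i.i.d.\ mass-measure sequence of distinguished points, I would use the exchangeability of the labels of the leaves of Rémy's algorithm at every step combined with the convergence of their empirical distribution to the uniform (mass) measure on the limiting tree. Assumption~\ref{growing:it:moment condition on the sup of A} is supplied by the sub-Gaussian diameter bound of Lemma~\ref{growing:lem:sup hauteur algo remy queue gaussienne}; since all the $\cA_k$ have the same law, one may take $(c_k)_{k\geq 1}$ to be constant, so the exponent condition $c_k \leq k^{s+\petito{1}}$ with $s = 0 < 1/(c+1)$ holds trivially. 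For~\ref{growing:it:measures converge}, one checks that the uniform measure on the vertices of $H_n$ weighs block $u_k$ proportionally to $2\deg_{\ttP_n}^+(u_k)+2$, which is of the form \eqref{growing:eq:mesures par les degrés} with $b_1 = 1$ and $b_k = 1/2$ for $k \geq 2$; Proposition~\ref{growing:prop:convergence measures on wrt} then gives the required almost sure weak convergence towards the WRT limit measure~$\mu$.

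Once the five hypotheses are checked, Theorem~\ref{growing:thm:metatheorem} yields the almost sure Gromov--Hausdorff--Prokhorov convergence of the rescaled graphs to the iterative gluing construction with blocks equal to twice the Brownian CRT endowed with i.i.d.\ mass-measure points, scaling factors $((\mathsf m^\mathbf{a}_n)^{1/2})_{n\geq 1}$ and weights $(\mathsf m^\mathbf{a}_n)_{n\geq 1}$. The identification of $(\mathsf M^\mathbf{a}_n)_{n\geq 1}$ as a $\MLMC(2/3, 1/3)$-chain is then a direct application of \cite[Proposition~28]{senizergues_geometry_2019} to the constant sequence $\mathbf{a} = (1/2, 1/2, \dots)$. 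The only non-routine step I anticipate is the strengthening of the convergence in~\ref{growing:it:convergence des processus A} to include all infinitely many distinguished points simultaneously; all the remaining items are essentially book-keeping or direct citations of the companion paper.
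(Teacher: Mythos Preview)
Your proposal is correct and follows essentially the same route as the paper: both arguments amount to checking assumptions \ref{growing:it:Pn preferential attachment}--\ref{growing:it:measures converge} of Theorem~\ref{growing:thm:metatheorem} with $\mathbf a=(1/2,1/2,\dots)$ and $\gamma=1/2$, invoking \cite[Theorem~5]{curien_stable_2013} for~\ref{growing:it:convergence des processus A}, Lemma~\ref{growing:lem:sup hauteur algo remy queue gaussienne} for~\ref{growing:it:moment condition on the sup of A}, and the form~\eqref{growing:eq:mesures par les degrés} for~\ref{growing:it:measures converge}. One small slip: your vertex count $2\deg^+_{\ttP_n}(u_k)+2$ is off by one for $k\ge 2$ (the root is excluded there, giving $2\deg^+_{\ttP_n}(u_k)+1$), though this does not affect the conclusion since either choice of $b_k$ satisfies the growth conditions required by Proposition~\ref{growing:prop:convergence measures on wrt}.
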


Remark that Proposition~\ref{growing:prop:convergence rémy généralisé} describes the limit as an iterative gluing construction using blocks that are all equal to the $\intervalleff{0}{1}$ interval rooted at $0$ endowed with i.i.d. random points.
The associated sequence of scaling factors and weights would then be both equal to a sequence $(\mathsf m_n)_{n\geq 1}$ having the distribution of the increment of a $\MLMC(\frac{1}{3},\frac{1}{3})$ Mittag-Leffler Markov chain. 
Proposition~\ref{growing:prop:proposition generalised remy version 2} proves in particular the non-trivial fact that the two iterative gluing constructions with segments or Brownian trees lead to the same object. 

\subsection{Marchal algorithm started from an arbitrary seed}
Let us define Marchal's algorithm started from a rooted connected multigraph $G$, as introduced in \cite{goldschmidt_stable_2018}, following the same idea as \cite{marchal_note_2008}. 
Fix $\alpha\in\intervalleoo{1}{2}$. We let $H^\alpha_1=G$ and for each $n\geq 1$ define $H^{\alpha}_{n+1}$ recursively. 
If $H^{\alpha}_n$ is defined then 
take a vertex or an edge with probability proportional to their weight, where their weight are defined as
\begin{itemize}
	\item $\alpha-1$ for any edge,
	\item $\deg(v)-1-\alpha$ for a vertex $v$ with degree $3$ or more,
	\item $0$ for a vertex of degree $2$ or less.
\end{itemize} 
Then if it is an edge, split this edge into $2$ edges with a common endpoint and add an edge linking that newly created vertex to a new leaf. 
Otherwise, attach an edge linking the selected vertex to a new leaf.
The obtained graph is then $H^\alpha_{n+1}$.

This construction has recently been studied in \cite{goldschmidt_stable_2018}, whose results already ensure that these graphs appropriately rescaled converge in the GHP topology to some random object that is constructed using an iterative gluing construction. 
In this case, there are two natural interpretation of this graph process in terms of decorations on the Ulam tree, which give two different descriptions of the limiting object: one of them coincides with the one given in \cite{goldschmidt_stable_2018}, but the other one is different. 

Let us describe the difference between the two using Figure~\ref{growing:fig:different description for marchal}. Consider Marchal's algorithm started from the single-edge graph and label the non-root leaves by their order of appearance. 
One way of describing the process in a way that is handled by Theorem~\ref{growing:thm:metatheorem} is the following, which is represented in Figure~\ref{growing:subfig:weight along the segments}: every time that a new leaf is added, the newly created block consists of just one edge with Marchal weight $\alpha-1$, whose two extremities have no weight. 
The edge or vertex that was selected when adding this new leaf belongs to some block; if it was a vertex then the weight of this vertex is reinforced by $1$ in its own block; if it was an edge then a new vertex of weight $2-\alpha$ and a new edge of weight $\alpha-1$ are created in the block. In both cases, the total Marchal weight of that block is reinforced by $1$. Using this description, in the limit, the blocks (other than the first one possibly, if we start with an arbitrary seed) are described as segments with a countable number of atoms of weight along them.

The second way of describing the process is the following, represented in Figure~\ref{growing:subfig:weight on the roots}. Every time that a new leaf is added, the newly created block consists one edge with Marchal weight $\alpha-1$, rooted at a point of weight $2-\alpha$. 
The edge or vertex that was selected when adding this new leaf belongs to some block; if it was a vertex then the weight of this vertex is reinforced by $\alpha-1$ in its own block; if it was an edge then a new vertex and a new edge are created in the block; the vertex has no weight and the edge has weight $\alpha-1$. 
In both cases, the total Marchal weight of that block is reinforced by $\alpha-1$. Using this description, in the limit, the blocks (other that the first one) are described as segments with an atom at their root. 

\begin{figure}
\begin{tabular}{ccc}
	\subfloat[The tree $H_4^\alpha$\label{growing:subfig:marchal tree}]{\includegraphics[height=4cm,page=1]{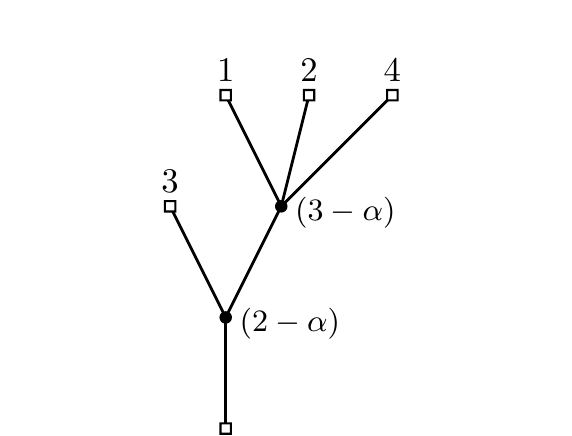}} & 	\subfloat[A decomposition of $H_4^\alpha$\label{growing:subfig:weight along the segments}]{\includegraphics[height=4cm,page=2]{Figures/GrowingGraphs/Marchal_weights.pdf}} &
	\subfloat[Another decomposition of $H_4^\alpha$\label{growing:subfig:weight on the roots}]{\includegraphics[height=4cm,page=3]{Figures/GrowingGraphs/Marchal_weights.pdf}}
\end{tabular}
\caption{Two different decomposition of a tree constructed using Marchal's algorithm as a gluing along a decoration. The filled-in vertices have non-zero Marchal weight, indicated in parenthesis. All the edges have Marchal weight $\alpha-1$.}\label{growing:fig:different description for marchal}
\end{figure}

\paragraph{Notation.} In what follows, we write $w(G)$ for the sum of the weights of all vertices and edges of a multigraph $G$. Note that if $G$ has surplus $s$ and $\ell$ vertices of degree $1$ and $m$ vertices of degree $2$ then $w(G)=(\ell-1)\alpha+m(\alpha-1)+s(\alpha+1)-1.$
Also we denote by the symbol $-$ the graph with only one edge and two endpoints.

\subsubsection{Splitting the width}\label{growing:subsec:splitting the width}
In the first decomposition, we take $(a_n)=(w(G),\alpha-1,\alpha-1,\dots)$ and $(\ttP_n)_{n\geq 1}$ taken as a preferential attachment tree $\pa(\mathbf a)$. 
The processes $(\cA_k,k\geq 1)$ all follow the same Markov transitions on weighted graphs, starting from the rooted graph $G$ in the case of $\cA_1$ and from the single-edge graph, which we denote $-$, in the case of $\cA_k$ for any $k\geq 2$. 
We denote $(\cA_G^\alpha(n),n\geq 1)$ such a process started from the seed graph $G$.
 
In this setting, the weight of an edge is always $\alpha-1$ but the weight of vertices evolves with time. At time $0$, for any seed graph $G$, every vertex with degree $d\geq 3$ is given weight $d-1-\alpha$ and other vertices have weight $0$.
The process evolves then in the following way: to obtain $\cA_G^\alpha(m+1)$ from $\cA(m)$ we choose at random a an edge or a vertex proportionally to their weights:
\begin{itemize}
\item if it is a vertex $x$ then $\cA_G^\alpha(m+1)$ is obtained from $\cA_G^\alpha(m)$ by setting $x_{m+1}$ its $(m+1)$-st distinguished point to $x$, and incrementing the weight of $x$ by one,
\item if it is an edge, then $\cA_G^\alpha(m+1)$ is obtained from $\cA_G^\alpha(m)$ by splitting this edge in $2$ by adding a new vertex $x$, giving weight $2-\alpha$ to this vertex, and setting $x_{m+1}$ its $(m+1)$-st distinguished point to $x$.
\end{itemize}

\begin{lemma}\label{growing:lem:convergence width splitting}
With this dynamics, for $\cA_G^\alpha$ for any connected graph $G$ with at least one edge, we have the following convergence almost surely in $\M^{\infty \bullet}$,
\begin{align}\label{growing:eq:convergence block marchal width splitting}
m^{-(\alpha -1)}\cdot \cA_G^\alpha(m)\underset{n\rightarrow\infty}{\longrightarrow} \mathcal{C}^\alpha_G=(C^\alpha_G,\dist,\rho,(X_i)_{i\geq 1}),
\end{align}
almost surely as in $\M^{\infty\bullet}$, where the distribution of the limiting object is described in the paragraph below.
Moreover, for any $p\geq 0$, we have
\begin{align}\label{growing:eq:control diameter width splitting}
\Ec{\left(\sup_{m\geq 0} \frac{\diam\cA_G^\alpha(m)}{m^{\alpha -1}}\right)^p}< + \infty.
\end{align}
\end{lemma}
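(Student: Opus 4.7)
The plan is to decompose $\cA_G^\alpha(m)$ along the combinatorial structure inherited from $G$. For each $e\in E(G)$, let $N_e(m)$ be the number of edges currently on the path (``branch'') that replaces $e$, and set $W_e(m):=N_e(m)(\alpha-1)+(N_e(m)-1)(2-\alpha)=N_e(m)+\alpha-2$; for each $v$ with $\deg_G(v)\geq 3$, let $W_v(m)$ be its current weight. Each step of the dynamics increases the total weight by exactly $1$ and allocates the increase to a single track with probability proportional to its current weight, so $(W_e(m),W_v(m))_{e,v}$ evolves as a classical P\'olya urn with initial weights $(\alpha-1,\dots,\alpha-1,\dots,\deg(v)-1-\alpha,\dots)$. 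Theorem~\ref{growing:thm:classical Polya urn} then yields the almost sure convergence
\[\tfrac{1}{m}\bigl(W_e(m),W_v(m)\bigr)_{e,v}\longrightarrow (Z_e,Z_v)_{e,v},\]
to a Dirichlet-distributed random vector with uniformly bounded $L^p$-moments for every $p\geq 1$.

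Next I analyse a single branch indexed by the number of times $s\geq 0$ it has been selected. At time $s$ the total branch weight is $s+\alpha-1$, and with probability $(\alpha-1)N_e(s)/(s+\alpha-1)$ an edge is split (so that $N_e\mapsto N_e+1$), otherwise an interior vertex is reinforced and $N_e$ is unchanged. A direct computation shows that $M_s:=N_e(s)/\psi(s)$ is a positive martingale for $\psi(s):=\Gamma(\alpha-1)\Gamma(s+2\alpha-2)/(\Gamma(2\alpha-2)\Gamma(s+\alpha-1))\sim c_\alpha\, s^{\alpha-1}$, and hence $N_e(s)/s^{\alpha-1}$ converges almost surely to some positive random variable $L_e$. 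A standard second-moment computation together with Doob's inequality gives $\Ec{\sup_{s}M_s^{p}}<\infty$ for every $p\geq 1$. Composing this with the first paragraph produces $N_e(m)/m^{\alpha-1}\to Z_e^{\alpha-1}L_e$ almost surely, which identifies the rescaled graph distance along each branch.

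To lift this to a convergence in $\M^{\infty\bullet}$, I refine the urn by viewing each current edge and each current interior vertex of a branch as its own colour, reinforced by $1$ when selected as a vertex and replaced by three new colours of weights $\alpha-1,2-\alpha,\alpha-1$ when split as an edge. An analogous P\'olya-style analysis then gives almost sure convergence of the rescaled graph distance between any fixed pair of distinguished points on the branch, so that the branch as a metric space with marked points converges, after rescaling by $m^{-(\alpha-1)}$, to a segment of length proportional to $Z_e^{\alpha-1}L_e$. The labels of the distinguished points added within the branch are exchangeable conditionally on the sequence of unlabelled moves, so an argument analogous to the one in Lemma~\ref{growing:lem:convergence edge-splitting} identifies the sequence $(X_{k,i})_{i\geq 1}$ in the limit as i.i.d.\ samples from a random length measure on the limiting segment. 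Gluing the branches at the original vertices of $G$ produces $\cC_G^\alpha$, and the convergence \eqref{growing:eq:convergence block marchal width splitting} follows automatically since $G$ has only finitely many branches. The bound \eqref{growing:eq:control diameter width splitting} is then obtained by majorising $\diam\cA_G^\alpha(m)$ by a constant times $\sum_e N_e(m)$ and using the uniform $L^p$ control on $\sup_s M_s$. The main obstacle I expect is the identification of the i.i.d.\ structure of the limiting $(X_{k,i})_i$ in the last step: unlike the pure edge-splitting case of Lemma~\ref{growing:lem:convergence edge-splitting}, the mixed edge-split/vertex-reinforcement dynamics here obscures the underlying symmetry and the exchangeability argument must be set up with care.
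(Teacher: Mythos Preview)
Your overall scaffold --- split the total weight into per-branch pieces $W_e(m)$ and per-vertex pieces $W_v(m)$, run a classical P\'olya urn, then analyse each branch at its own local time via the martingale $M_s=N_e(s)/\psi(s)$ --- is sound and parallels what the paper does, though the paper chooses a slightly coarser first split (it lumps all branches into a single colour $W_E(n)$ and separates only the original internal vertices) and then recognises the within-$E$ dynamics as a Chinese Restaurant Process with seating plan $(\alpha-1,|E|(\alpha-1))$. That CRP identification is what gives the paper both the scaling of the total edge count $L_E(n)/n^{\alpha-1}\to W_E^{\alpha-1}S$ via the diversity and the moment bound \eqref{growing:eq:control diameter width splitting} via Lemma~\ref{growing:lem:number of tables in a CRP}; your martingale route can reach the same conclusions but you would need to actually verify $L^p$-boundedness of $M_s$ for every $p$, which is essentially the content of that lemma.

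The genuine gap is in your identification of the limiting distinguished points. You write that the $(X_i)_{i\geq 1}$ are ``i.i.d.\ samples from a random length measure on the limiting segment''. That is not correct: in this process every distinguished point $x_{m+1}$ is a \emph{vertex} (either an original internal vertex of $G$ or a newly created degree-$2$ vertex whose weight is then reinforced), never a fresh uniform point on an edge. Consequently the limiting sampling measure $\mu$ is purely atomic: it puts mass $W_v$ at each original internal vertex and mass $W_E\cdot Q_j$ at the $j$-th created interior vertex, where $(Q_j)_{j\geq 1}\sim\mathrm{PD}(\alpha-1,|E|(\alpha-1))$. In particular the $X_i$ collide with positive probability. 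This is exactly where your analogy with Lemma~\ref{growing:lem:convergence edge-splitting} breaks down; there the dynamics are pure edge-splitting so the distinguished points are i.i.d.\ uniform on the length, but here the reinforcement of vertex weights produces atoms. The paper extracts this via the exchangeability description of the CRP (Theorem~\ref{growing:thm:exhangeability chinese restaurant process}): conditionally on the asymptotic table sizes $(Q_j)$ and the limiting positions $(Z_j)$ of those tables along the graph, each new customer independently picks table $j$ with probability $Q_j$. Your ``refined urn'' paragraph does not capture this mechanism, and the exchangeability argument you allude to would have to be set up over \emph{vertices}, not over edge positions, to recover the correct limit.
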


\paragraph{Limiting block.}
For any connected multigraph $G$ with at least one edge, let us describe the law of the random metric space \[\mathcal{C}^\alpha_G=(C^\alpha_G,\dist,\rho,(X_i)_{i\geq 1}).\] 
It is a continuous version of $G$ meaning that we define it by replacing every edge of $G$ by a segment of some length. We label its edges $e_1,e_2,\dots e_{\abs{E}}$ in arbitrary order and replace each edge $e$ with a segment of length $L(e)$, whose distribution is characterised by what follows.
We let $I$ be the set of vertices of $G$ that have degree greater than $3$ and write $I=\{v_1, v_2,...,v_{\abs{I}}\}$.
All the random variables used in the construction are supposed to be independent.
\begin{itemize}
	\item We let \[ \left(W_{E},W_{v_1},\dots,W_{v_{\abs{I}}}\right)\sim \mathrm{Dir}\left(\abs{E}\cdot(\alpha -1),d_{v_1}-1-\alpha,\dots,d_{v_{\abs{V}}}-1-\alpha\right)\]
	\item We let
	\[(Q_{j})_{j\geq1}\sim \mathrm{PD}\left(\alpha-1,\abs{E}\cdot (\alpha-1)\right), \quad \text{and} \quad S \quad \text{its $(\alpha-1)$-diversity},\]
	\item The length of the edges are defined as 
	\[(L(e_1),L(e_2),\dots,L(e_{\abs{E}}))= W_E^{\alpha-1} \cdot S\cdot (B_1,B_2,\dots,B_{\abs{E}}),\]
	with $(B_1,B_2,\dots,B_{\abs{E}})\sim \mathrm{Dir}(1,1,\dots 1)$.
	\item Conditionally on the lengths, let $(Z_{j})_{j\geq1}$ be independent and uniformly distributed on the total length of the graph.
	\item We set \[\mu=\sum_{v\in I}W_v \delta_{v}+ W_E\cdot \sum_{j=1}^{\infty}Q_{j}\delta_{Z_{j}},\]
	and conditionally on all the rest, the points $(X_i)_{i\geq 1}$ are obtained as i.i.d.\ samples under the probability measure $\mu$.
\end{itemize}
\begin{proof}[Proof of Lemma~\ref{growing:lem:convergence width splitting}]
The proof of this convergence is based on convergence results for P\'olya's urn and Chinese Restaurant Processes, recalled in Section~\ref{growing:subsection:classical polya urn} and Section~\ref{growing:subsection:chinese restaurant process} of the appendix.
Indeed, let us study the evolution of the process $(\cA(m))_{m\geq 0}$ started from $\cA(0)=G$. For any internal vertex $v\in I$ and $n\geq 0$, we let $W_v(n)$ be the weight of that vertex in the weighted graph $\cA(n)$, and $W_E(n)$ be the sum of the weight of all the other parts of the graph $\cA(n)$, that is, the sum of the weight of all edges and vertices of degree $2$. 

From the definition of the process, the vector  $\left(W_{E}(n),W_{v_1}(n),\dots,W_{v_{\abs{I}}}(n)\right)$ evolves as the weight of colours in a P\'olya urn (whose definition is recalled in Section~\ref{growing:subsection:classical polya urn}) with starting proportions $\left(\abs{E}\cdot(\alpha -1),\deg(v_1)-1-\alpha,\dots,\deg(v_{\abs{I}})-1-\alpha\right)$ so thanks to Theorem~\ref{growing:thm:classical Polya urn}, 
\[\frac{1}{n}\cdot \left(W_{E}(n),W_{v_1}(n),\dots,W_{v_{\abs{I}}(n)}\right)\underset{n\rightarrow\infty}{\rightarrow} \left(W_{E},W_{v_1},\dots,W_{v_{\abs{I}}}\right),\]
with $\left(W_{E},W_{v_1},\dots,W_{v_{\abs{I}}}\right)\sim \mathrm{Dir}\left(\abs{E}\cdot(\alpha -1),d_{v_1}-1-\alpha,\dots,d_{v_{\abs{V}}}-1-\alpha\right)$.
Also, conditionally on $\left(W_{E},W_{v_1},\dots,W_{v_{\abs{I}}}\right)$ the choice at each step of the construction is i.i.d such that any vertex interval $v$ is chosen with probability $W_{v}$ and something else (edge or vertex of degree $2$) is chosen with probability $W_{E}$.

Then, let us define $y_1,y_2,\cdots$ the vertices of degree $2$ created by the process in order of appearance. For any $n\geq 0$ and $i\geq 1$ we let $N_i(n)=\#\enstq{1\leq k\leq n}{x_k=y_i}$, the number of time that $y_i$ appears in the list $x_1,x_2,\dots,x_n$ of distinguished points of $\cA(n)$.  
Up to a time-change that depends on the sequence $(W_{E}(n))_{n\geq 1}$, the sequence $(N_i(n),i\geq 1)$ evolves with time like the number of customers sitting at each table in a Chinese Restaurant Process with seating plan $(\alpha-1,\abs{E}\cdot (\alpha-1))$, so thanks to Theorem~\ref{growing:thm:convergence chinese restaurant process}, we have the following convergence almost surely in $\ell^1$ 
\[\frac{1}{W_E(n)}\cdot (N_i(n))_{i\geq 1}\underset{n\rightarrow\infty}{\rightarrow} (P_i)_{i\geq 1},\]
with  $(P_i)_{i\geq 1}\sim \mathrm{GEM}\left(\alpha-1,\abs{E}\cdot (\alpha-1)\right).$

Conditionally on the sequence $(Q_j)_{j\geq 1}$, which we define as the decreasing rearrangement of the sequence $(P_i)_{i\geq 1}$, Theorem~\ref{growing:thm:exhangeability chinese restaurant process} gives a description of this Chinese Restaurant Process: at any moment, when a new customer enters the restaurant they receive a label in such a way that each label $j\geq 1$ has probability $Q_j$, independently of the other costumers. If there is already a customer with that label in the restaurant, the new customer joins them, otherwise they sit at a new table.    

The total number of edges $L_E(n)$ in the graph $\cA(n)$, up to an additive constant, is equal to the number of vertices created before that time. This corresponds to that number of tables in the Chinese Restaurant Process defined above. Using Theorem~\ref{growing:thm:convergence chinese restaurant process}, we get
\begin{align*}
	\frac{L_E(n)}{n^{\alpha-1}}=\frac{L_E(n)}{(W_E(n))^{\alpha-1}}\cdot \frac{W_E(n)^{\alpha -1}}{n^{\alpha-1}} \underset{n\rightarrow\infty}{\rightarrow} W_E^{\alpha-1} \cdot S,
\end{align*} 
where $S$ is the $(\alpha-1)$-diversity of the sequence $(P_i)_{i\geq 1}$. By using Lemma~\ref{growing:lem:number of tables in a CRP} we also have, for any $p\geq 1$, 
\begin{align*}
	\Ec{\left(\frac{L_E(n)}{n^{\alpha-1}}\right)^p}<\infty.
\end{align*}
Since $L_E(n)$ is an upper-bound for the diameter of $\cA(n)$, this already proves the claim \eqref{growing:eq:control diameter width splitting}.

Now, just looking at the shape of the graph $\cA(n)$, we notice that, up to a time-change that is deduced from the sequence $(L_E(n))_{n\geq 0}$, this graph evolves under the uniform edge-splitting process described in Lemma~\ref{growing:lem:convergence edge-splitting}. Using the result of this lemma, we deduce that almost surely 
\begin{align*}
	\frac{1}{L_E(n)}(L(e_1,n),L(e_2,n),\dots,L(e_E,n)) \underset{n\rightarrow\infty}{\rightarrow} (B_1,B_2,\dots,B_{\abs{E}}),
\end{align*}
where $(B_1,B_2,\dots,B_{\abs{E}})\sim \mathrm{Dir}(1,1,\dots 1)$. 
Also, the vertices created during the process ranked in order of creation, which we denote $(Y_i)_{i\geq 1}$, are in the limit positioned i.i.d.\ uniformly along the length of the limiting space, independently of $(P_i)_{i\geq 1}$
We define the points $(Z_j)_{j\geq 1}$ as the points such that $((Q_j,Z_j))_{j\geq 1}$ is the non-increasing reordering of $((P_i,Y_i))_{i\geq 1}$ with respect to the first coordinate (there are almost surely no ties), hence by definition $\sum_{i=1}^{\infty}Q_{j}\delta_{Z_{j}}=\sum_{i=1}^{\infty}P_{i}\delta_{Y_{i}}$.

In the end, we just have to justify that the points $(X_i)_{i\geq 1}$ are indeed i.i.d.\ taken under the measure $\mu=\sum_{v\in I}W_v \delta_{v}+ W_E\cdot\sum_{i=1}^{\infty}Q_{j}\delta_{Z_{j}}$, conditionally on all the rest.
This follows from the description of the evolution as $n\rightarrow\infty$ of the sequences $\left(W_{E}(n),W_{v_1}(n),\dots,W_{v_{\abs{I}}}(n)\right)$ and $(N_i(n),i\geq 1)_{n\geq 1}$ when conditioned on their limit. 
Indeed, at any time $n\geq 1$, conditionally on $\left(W_{E},W_{v_1},\dots,W_{v_{\abs{I}}}\right)$ and $(Q_j)_{j\geq 1}$, the next distinguished vertex $x_{n+1}$ is either any $v\in I$ with probability $W_v$, or a vertex of degree $2$ with probability $W_E$. 
If it is a vertex of degree $2$, then it corresponds to any of the tables $j\geq 1$ with asymptotic size $Q_j$ with probability $Q_j$, independently of the other ones. 
Thanks to the above paragraph, the limiting positions $(Z_j)_{j\geq 1}$ of the vertices corresponding to those tables are i.i.d. along the length of the limiting graph, which concludes the proof. 
\end{proof}
\paragraph{Convergence result.}
We get the following convergence.
\begin{proposition}\label{growing:prop:convergence marchal algo 1} 
	The graphs $H_n^\alpha$ endowed with the uniform measure on their vertices converge almost surely in the scaling limit for the Gromov-Hausdorff-Prokhorov topology,
	\begin{align*}
	(H_n^\alpha,n^{1-1/\alpha}\cdot\dist_{\mathrm{gr}},\mu_{\mathrm{unif}})  \underset{n\rightarrow\infty}{\longrightarrow} \cH^\alpha_G.
	\end{align*}
The distribution of the limiting space $\cH^\alpha_G$ is obtained as an iterative gluing construction with blocks 
	\begin{align*}
	(\bB_1,\bD_1,\bRho_1,(X_{1,i})_{i\geq 1})\overset{\mathrm{(d)}}{=}\mathcal{C}^\alpha_G \quad \text{and} \quad \forall n\geq 2,\  (\bB_n,\bD_n,\bRho_n,(X_{n,i})_{i\geq 1})\overset{\mathrm{(d)}}{=}\mathcal{C}^\alpha_-
	\end{align*}
	and sequence of scaling factors $(\mathsf m_k^{(\alpha-1)})_{k\geq 1}$ and weights $(\mathsf m_k)_{k\geq 1}$, where $(\mathsf m_k)_{k\geq 1}$ is obtained as the increments of a $\MLMC(\frac{1}{\alpha},\frac{w(G)}{\alpha})$. 
\end{proposition}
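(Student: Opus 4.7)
The plan is to apply Theorem~\ref{growing:thm:metatheorem} with the ``width splitting'' decomposition described in Section~\ref{growing:subsec:splitting the width}. We take the fitness sequence $\mathbf{a} = (w(G), \alpha-1, \alpha-1, \ldots)$, which satisfies $(H_{c,c'})$ with $c = \alpha-1$ and $c' = 0$, so in particular $c' < 1/\alpha = 1/(c+1)$. Let $(\ttP_n)_{n \geq 1}$ have distribution $\pa(\mathbf{a})$, and let the processes $(\cA_k)_{k \geq 1}$ be jointly independent and independent of $(\ttP_n)$, with $\cA_1 \overset{\mathrm{(d)}}{=} \cA_G^\alpha$ and $\cA_k \overset{\mathrm{(d)}}{=} \cA_-^\alpha$ for $k \geq 2$. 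Equipping each block with the uniform measure on its own vertices (i.e.\ the vertices of the block, excluding the root when $k \geq 2$), one checks by a straightforward induction on $n$ that the resulting measured metric space $\sG(\cD^{(n)}, \bnu^{(n)})$ has the same distribution as $(H_n^\alpha, \dist_{\mathrm{gr}}, \mu_{\mathrm{unif}})$.

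Hypotheses \ref{growing:it:Pn preferential attachment} and \ref{growing:it:Dn processus en deguk} of Theorem~\ref{growing:thm:metatheorem} then hold by construction. Hypothesis \ref{growing:it:convergence des processus A} (almost sure convergence of the blocks with scaling exponent $\gamma = \alpha-1$) and Hypothesis \ref{growing:it:moment condition on the sup of A} (uniform moment bound on the rescaled diameter, with $c_k = 0$ say) both follow directly from Lemma~\ref{growing:lem:convergence width splitting}; the limit blocks are $\cC_G^\alpha$ for $k=1$ and i.i.d.\ copies of $\cC_-^\alpha$ for $k \geq 2$.

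The main obstacle is verifying Hypothesis \ref{growing:it:measures converge}, namely the almost sure weak convergence $\nu^{(n)} \to \mu$ on $\overline{\bU}$ toward the WRT limit measure of $(\ttP_n)$. In contrast to the generalised R\'emy setting, $\nu^{(n)}(\{u_k\})$ is proportional to the vertex count $V_k = 1 + S_k(\deg^+_{\ttP_n}(u_k))$, where $S_k(m) = \Theta(m^{\alpha-1})$ almost surely by the Chinese-restaurant-process interpretation used in the proof of Lemma~\ref{growing:lem:convergence width splitting}; hence $\nu^{(n)}$ is not of the form \eqref{growing:eq:mesures par les degrés}. We instead verify the characterisation of weak convergence given by Lemma~\ref{growing:characterisation convergence measures}. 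The atoms vanish because $V_k = O(\deg^+_{\ttP_n}(u_k)^{\alpha-1}) = O(n^{(\alpha-1)/\alpha})$ while the total vertex count $|V(H_n^\alpha)| = \Theta(n)$ (each step of Marchal's algorithm adds one or two vertices). For the subtree masses we compare $\nu^{(n)}(T(u))$ to the uniform-on-blocks measure $\nu_n(T(u)) = |T(u) \cap \ttP_n|/n$, which converges to $\mu(T(u))$ by Proposition~\ref{growing:prop:convergence measures on wrt}: both the numerator and denominator of $\nu^{(n)}(T(u))$ split as a ``leaf count'' term plus a ``midpoint count'' term, and a law-of-large-numbers argument on the block-wise edge-split dynamics gives that these midpoint contributions have asymptotically the same ratio on $T(u)$ as on the whole tree.

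Once all hypotheses are verified, Theorem~\ref{growing:thm:metatheorem} yields the GHP convergence with scaling exponent $\gamma/(c+1) = (\alpha-1)/\alpha$, and identifies the limit as the iterative gluing construction with blocks $\cC_G^\alpha, \cC_-^\alpha, \cC_-^\alpha, \ldots$, scaling factors $((\mathsf{m}_k^{\mathbf{a}})^{\alpha-1})_{k \geq 1}$ and weights $(\mathsf{m}_k^{\mathbf{a}})_{k \geq 1}$. The distribution of $(\mathsf{m}_k^{\mathbf{a}})$ as the increments of a $\MLMC(1/\alpha, w(G)/\alpha)$ chain then follows from the explicit description of $(\mathsf{M}_k^{\mathbf{a}})$ recalled in Section~\ref{growing:sec:pa and wrt}, since $\mathbf{a}$ is constant equal to $\alpha - 1$ from the second term.
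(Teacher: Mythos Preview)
Your proposal follows the same route as the paper: apply Theorem~\ref{growing:thm:metatheorem} with the width-splitting decomposition, fitness sequence $\mathbf a=(w(G),\alpha-1,\alpha-1,\dots)$, and use Lemma~\ref{growing:lem:convergence width splitting} for hypotheses \ref{growing:it:convergence des processus A} and \ref{growing:it:moment condition on the sup of A}. The identification of the limit and of the $\MLMC$ distribution is also the same.

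The only place where your argument is thinner than the paper's is the verification of \ref{growing:it:measures converge}. Your sketch (``law-of-large-numbers on the block-wise edge-split dynamics'') does not name the right mechanism: the number of midpoints created inside a given block is a CRP table count, not a sum of i.i.d.\ terms, and summing $S_k\,(\deg^+_{\ttP_n}(u_k))^{\alpha-1}$ over the random set $\{k:u_k\succeq u\}$ is not a straightforward LLN. The paper instead observes that the pair (total edge weight, total vertex weight) in $H_n^\alpha$ is a balanced generalised P\'olya urn with replacement matrix
\[
\begin{bmatrix} 2(\alpha-1) & 2-\alpha\\ \alpha-1 & 1 \end{bmatrix},
\]
which by Lemma~\ref{growing:lem:convergence generalised polya urn} gives $|V(H_n^\alpha)|\sim \alpha n$ almost surely. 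The key point, which you only allude to, is \emph{self-similarity}: the part of the Marchal process that takes place in the blocks indexed by $T(u)$ is again a Marchal process (with a different seed), so the \emph{same} urn argument yields that the number of vertices in those blocks is $\sim \alpha\cdot n\nu_n(T(u))$. Dividing, $\nu^{(n)}(T(u))\sim\nu_n(T(u))\to\mu(T(u))$; together with $\nu^{(n)}(\{u\})\to 0$ (which you handle correctly) this gives \ref{growing:it:measures converge} via Lemma~\ref{growing:characterisation convergence measures}. Making this self-similarity-plus-urn step explicit turns your sketch into a complete proof identical to the paper's.
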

\begin{proof}
The proof of this proposition is another application of Theorem~\ref{growing:thm:metatheorem}. Conditions \ref{growing:it:Pn preferential attachment} and \ref{growing:it:Dn processus en deguk} are satisfied thanks to the discussion at the beginning of the section, conditions \ref{growing:it:convergence des processus A} and \ref{growing:it:moment condition on the sup of A} are satisfied thanks to Lemma~\ref{growing:lem:convergence width splitting}.

Last, we have to verify that \ref{growing:it:measures converge} is satisfied for some measure-valued decoration $\bnu^{(n)}$ such that $\sG(\cD^{(n)},\bnu^{(n)})$ coincides with $(H_n)_{n\geq 1}$ endowed with its graph distance and its uniform measure on the vertices.
A choice for $\bnu^{(n)}$ is achieved, as in the other examples, by letting for every $n\geq 1$, for all $u\in\bU$, $\nu_u^{(n)}$ charges every vertex of $\cD^{(n)}(u)$, except its root if $u\neq \emptyset$, with the same mass. 

The slight technical difficulty here is that the total number of vertices $\abs{V(H_n^\alpha)}$ in $H_n^\alpha$ is random, and the convergence of $\nu^{(n)}$ towards $\mu$ is not directly ensured by Proposition~\ref{growing:prop:convergence measures on wrt}. 
To handle this, we are going to prove that the number of vertices in this algorithm grows asymptotically linearly like a constant multiple of $n$. 
Indeed, let us define $X_n$ as the total weight of edges in $H_n^\alpha$ and $Y_n$ the total weight of vertices. 
Every time that an edge is picked by the algorithm, two edges of weight $(\alpha-1)$ and one vertex of weight $(2-\alpha)$ are created; when a vertex is picked, one edge of weight $(\alpha-1)$ is created and the weight of a vertex is reinforced by $1$. This indicates that the couple $(X_n,Y_n)$ evolves like a generalised P\'olya urn with replacement matrix \[\begin{bmatrix}
2(\alpha -1) &2-\alpha\\ \alpha -1 &1
\end{bmatrix}\] as described in Section~\ref{growing:subsection:classical polya urn} of the appendix. Using Lemma~\ref{growing:lem:convergence generalised polya urn}, the weight of edges almost surely grows like $\alpha(\alpha-1)n$ so the number of edges is asymptotically $\alpha n$ and so is the number of vertices, hence almost surely
\begin{align*}
	\abs{V(H_n^\alpha)}\underset{n\rightarrow\infty}{\sim} n \alpha.
\end{align*}

Now, consider $u\in \bU$, and let us investigate the behaviour of $\nu^{(n)}(T(u))$, where we recall the notation $T(u)=\enstq{v\in\bU}{u\preceq v}$.
We let $\nu_n$ be the uniform measure on the vertices of $\ttP_n$, in such a way that $n\nu_n(T(u))$ is the number of vertices in $\ttP_n$ that are above $u$. Thanks to Proposition~\ref{growing:prop:convergence measures on wrt} the sequence $(\nu_n)_{n\geq 1}$ almost surely converges towards the measure $\mu$ associated to $(\ttP_n)_{n\geq 1}$.
We want to show that almost surely, for any choice of $u\in\bU$,
\begin{align}\label{growing:eq:number of vertices number of leaves}
\nu_n(T(u))\underset{n\rightarrow\infty}{\sim} \nu^{(n)}(T(u)), \qquad \text{and} \qquad \nu^{(n)}(\{u\})\underset{n\rightarrow\infty}{\rightarrow}0,
\end{align}
which is enough to prove that the sequence $(\nu^{(n)})_{n\geq 1}$ converges to the same limit $\mu$ as $(\nu_n)_{n\geq 1}$ almost surely. Note that the second requirement is immediate from the fact that the number of vertices in $\cD^{(n)}(u)$ corresponds up to a constant to the out-degree $\deg_{\ttP_n}^{+}(u)$, which grows sub-linearly.

Now, the quantity $\abs{V(H_n^\alpha)} \cdot\nu^{(n)}(T(u))$ is the number of vertices in all the blocks $\cD^{(n)}(v)$ for $v\succeq u$. By self-similarity of the process, the reasoning made above for the total weight of edges also applies, and so the number of vertices in decorations $\cD^{(n)}(v)$ for $u\preceq v$ is asymptotically equivalent to the number of times that the algorithm picked an element in those decorations, which corresponds to the number $n\nu_n(T(u))$ of vertices of $\ttP_n$ above $u$. Hence
\begin{align*}
		\alpha n \nu^{(n)}(T(u))\underset{n\rightarrow\infty}{\sim} \abs{V(H_n^\alpha)} \cdot\nu^{(n)}(T(u))\underset{n\rightarrow\infty}{\sim} \alpha n \nu_n (T(u)),
\end{align*}
which leads to \eqref{growing:eq:number of vertices number of leaves}, and hence finishes the proof.

\end{proof}
\subsubsection{Another decomposition}
Using another decomposition into decorations we retrieve the other description of the limiting space, which was proved in \cite{goldschmidt_stable_2018}. It is obtained as an iterative gluing construction with blocks (the distribution of which we define below) 
	\begin{align*}
	(\bB_1,\bD_1,\bRho_1,\bNu_1) \overset{\mathrm{(d)}}{=}\mathcal{C}^{\mathrm{len},\alpha}_G \quad \text{and} \quad \forall n\geq 2,\  (\bB_n,\bD_n,\bRho_n,\bNu_n)\overset{\mathrm{(d)}}{=} \mathcal{C}^{\mathrm{len},\alpha}_{\bullet -},
	\end{align*}
	and sequence of weights and scaling factors $(\mathsf m_k)_{k\geq 1}$, where $(\mathsf m_k)_{k\geq 1}$ is obtained as the increments of a $\MLMC(1-\frac{1}{\alpha},\frac{w(G)}{\alpha})$. 

Let us describe the random metric space $\mathcal{C}^{\mathrm{len},\alpha}_G=(C^{\mathrm{len},\alpha}_G,\dist,\rho,(X_i)_{i\geq 1})$, for any rooted connected multigraph $G$. 
As before, we let $I$ be the set of vertices of $G$ that have degree greater than $3$ and write $I=\{v_1, v_2,...,v_{\abs{I}}\}$ and we arbitrarily label its edges $e_1,e_2,\dots e_{\abs{E}}$. Then
\begin{itemize}
	\item we define \[ \left(L(e_1),L(e_2)\dots,L(e_{\abs{E}}),L(v_1),\dots,L(v_{\abs{I}})\right)\sim \mathrm{Dir}\left(1,1,\dots,1,\frac{d_{v_1}-1-\alpha}{\alpha-1},\dots,\frac{d_{v_{\abs{I}}}-1-\alpha}{\alpha-1}\right),\]
	\item and set \[\nu=\sum_{v\in I}L(v)\cdot \delta_{v}+ \mu_{\mathrm{len}},\]
	where $\mu_{\mathrm{len}}$ is the length measure on the structure. Conditionally on all the rest, the sequence $(X_i)_{i\geq1}$ is i.i.d. with distribution $\nu$.
\end{itemize}
For the single-edge graph, this yields a segment of unit length endowed with the uniform measure. We also introduce a variant of this one. We define  $\mathcal{C}^{\mathrm{len},\alpha}_{\bullet -}=(C^{\mathrm{len},\alpha}_{\bullet -},\dist,\rho,(X_i)_{i\geq 1})$ as follows
\begin{itemize}
	\item we set \[ \left(L(e),L(\rho)\right)\sim \mathrm{Dir}\left(1,\frac{2-\alpha}{\alpha-1}\right)\]
	\item and \[\nu=L_\rho \delta_{\rho}+ \mu_{\mathrm{len}},\]
	where $\mu_{\mathrm{len}}$ is the length measure on the structure. Conditionally on all the rest, the sequence $(X_i)_{i\geq1}$ is i.i.d. with distribution $\nu$.
\end{itemize}
We do not provide another proof of the convergence because the result is already know from \cite{goldschmidt_stable_2018}. However, it could be done quite easily using Lemma~\ref{growing:lem:convergence edge-splitting} and arguments involving P\'olya urns and Theorem~\ref{growing:thm:classical Polya urn}. This is left to the reader.

\subsection{Scaling limits for growing trees and/or their looptrees.}
The looptree $\mathrm{Loop}(\tau)$ of a plane tree $\tau$ is a multigraph constructed from $\tau$ as follows: we first place a blue vertex in the middle of every edge of the tree $\tau$. Then, we connect two blue vertices if they correspond to two consecutive edges according to the cyclic ordering around vertices that are not the root. Then $\mathrm{Loop}(\tau)$ is obtained by removing all the vertices and edges that belong to the tree $\tau$, see Figure~\ref{growing:subfig:realisation of T6} and Figure~\ref{growing:subfig:construction looptree T6} for an example. 
An informal way of describing that construction is to say that every (non-root) vertex of the tree is replaced by a \emph{loop} that has the same length as the degree of that vertex. 

Whenever we work with a model of tree that has degrees that grow to infinity, studying the associated looptrees may allow to pass this information to the limit in terms of metric scaling limits. 
Among the two models that we present here, one of them admits scaling limits for both the tree itself and its associated looptree. For the other one, only the looptree behaves well in this sense.

\subsubsection{The $\alpha-\gamma-$growth model}
Fix $\alpha\in \intervalleoo{0}{1}$ and $\gamma\in \intervalleof{0}{\alpha}$. The $\alpha-\gamma$-growth model is defined as follows: $T_1^{\alpha,\gamma}$ is a  
tree with a single edge. 
Then if $T_n^{\alpha,\gamma}$ is already constructed, take an edge or a vertex at random with probability proportional to
\begin{itemize}
\item $1-\alpha$ for edges that are adjacent to a leaf,
\item $\gamma$ for edges that are not adjacent to a leaf,
\item $(d-2)\alpha -\gamma$ for every vertex of degree $d\geq 3$. 
\end{itemize} 
Then as in Marchal's algorithm, if an edge is chosen, it is split into two edges and a new edge leading to a new leaf is grafted at the newly created vertex. If a vertex is chosen, add an edge connecting it to a new leaf. Remark that for $\gamma=1-\alpha$, this algorithm corresponds to Marchal's algorithm with parameter $\frac{1}{\gamma}$.
We consider a planar variation of this algorithm, where every time that we attach a new leaf to a vertex we attach it in a uniform corner around this vertex. This makes it possible to consider $T_n^{\alpha,\gamma}$ as a plane tree and hence also define its corresponding looptree $\mathrm{Loop}(T_n^{\alpha,\gamma})$. 
\paragraph{Decomposition along a preferential attachment tree.}
\begin{figure}
\centering
\begin{tabular}{cc}
	\subfloat[A realisation of $T_6^{\alpha,\gamma}$, with leaves labelled with their time of creation \label{growing:subfig:realisation of T6}]{\includegraphics[height=6cm,page=1]{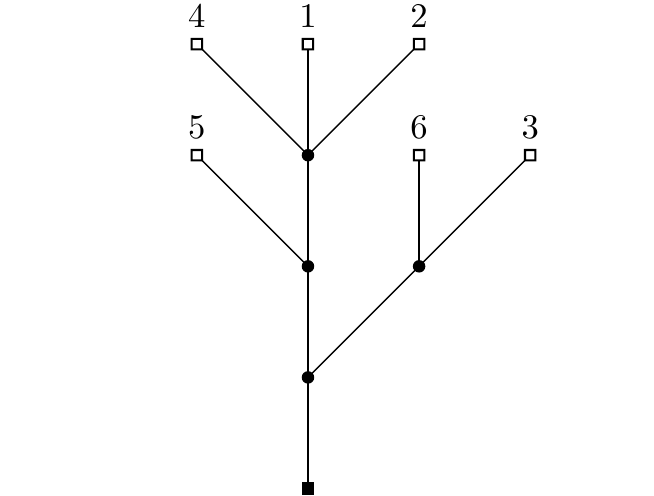}} & \subfloat[The corresponding decomposition as a gluing of decoration]{\includegraphics[height=6cm,page=2]{Figures/GrowingGraphs/looptree_decomposition}}\\
		\subfloat[The looptree $\mathrm{Loop}(T_6^{\alpha,\gamma})$ with the tree $T_6^{\alpha,\gamma}$ shown in gray\label{growing:subfig:construction looptree T6}]{\includegraphics[height=6cm,page=4]{Figures/GrowingGraphs/looptree_decomposition}} & \subfloat[Its decomposition as a gluing of a decoration]{\includegraphics[height=6cm,page=5]{Figures/GrowingGraphs/looptree_decomposition}}
\end{tabular}
	\caption{Decomposition of $T_n^{\alpha,\gamma}$ and its associated looptree}\label{growing:fig:decomposition alpha gamma tree looptree}
\end{figure}
Let us use the same decomposition of our trees $T_n^{\alpha,\gamma}$ (illustrated in Figure~\ref{growing:fig:decomposition alpha gamma tree looptree}) as we did for Marchal's algorithm in Section~\ref{growing:subsec:splitting the width}: every time that a new leaf is added, we create a new block that consists of just one edge with weight $1-\alpha$, whose two extremities have no weight. 
The edge or vertex that was selected when adding this new leaf belongs to some block; if it was a vertex then the weight of this vertex is reinforced by $\alpha$ in its own block; if it was an edge then a new vertex of weight $\alpha-\gamma$ and a new internal edge of weight $\gamma$ are created in the block. In both cases, the total weight of that block is reinforced by $\alpha$. 

As for the corresponding looptrees, we use a similar decomposition: for every block in the decomposition of $T_n^{\alpha,\gamma}$, we replace every vertex that is not the root of its block by the corresponding loop in $\mathrm{Loop}(T_n^{\alpha,\gamma})$, as displayed in Figure~\ref{growing:fig:decomposition alpha gamma tree looptree}. This gives rise to two decorations $\cD^{\mathrm{tree},(n)}$ and $\cD^{\mathrm{tree},(n)}$ that are constructed jointly and have the same support, as defined in Section~\ref{growing:subsec:decorations on the ulam tree}.

In the end, in order to be in the situation described by Theorem~\ref{growing:thm:metatheorem}, we let $\mathbf{a}=(\frac{1-\alpha}{\alpha},\frac{1-\alpha}{\alpha},\frac{1-\alpha}{\alpha},\dots)$. 
Let us describe the corresponding Markov processes $\cA^{\mathrm{tree}}$ and $\cA^{\mathrm{loop}}$ that govern the evolution of the blocks in one decoration and the other, and it is quite natural to describe them jointly.
 
At time $0$, we have $\cA^{\mathrm{tree}}(0)$ is a single-edge with weight $1-\alpha$, rooted at one end, and $\cA^{\mathrm{loop}}(0)$ is a single self-loop on one vertex. Then, we choose at random a an edge or a vertex proportionally to their weights:
\begin{itemize}
	\item if it is a vertex $x$ then $\cA^{\mathrm{tree}}(m+1)$ is obtained from $\cA^{\mathrm{tree}}(m)$ by setting $x_{m+1}^{\mathrm{tree}}$ its $(m+1)$-st distinguished point to $x$, and incrementing the weight of $x$ by $\alpha$. In the loop corresponding to that vertex in $\cA^{\mathrm{loop}}(m)$, an edge is chosen uniformly at random, split in two by the addition of a new vertex $y$, and this vertex $y$ becomes $x_{m+1}^{\mathrm{loop}}$ the $(m+1)$-st distinguished point of $\cA^{\mathrm{loop}}(m+1)$.
	\item If it is an edge, then $\cA^{\mathrm{tree}}(m+1)$ is obtained from $\cA^{\mathrm{tree}}(m)$ by splitting this edge in $2$ by adding a new vertex $x$, giving weight $\alpha-\gamma$ to this vertex, and setting its $(m+1)$-st distinguished point to $x$. Among the two edges that result from the splitting, the one further from the root has weight $1-\alpha$ and the other one has weight $\gamma$.
	In $\cA^{\mathrm{loop}}(m)$, the addition of this new vertex corresponds to the creation of a new loop of length $3$ in between the two loops that correspond to the two endpoints of the edge that was duplicated. The $(m+1)$-st distinguished point $x_{m+1}^{\mathrm{loop}}$ of $\cA^{\mathrm{loop}}(m+1)$ is the new vertex of degree two that was created in the process.
\end{itemize}

\begin{lemma}\label{growing:lem:convergence block tree looptree}
With this dynamics, the processes $\cA^{\mathrm{tree}}(m)$ and $\cA^{\mathrm{loop}}(m)$ admit an almost sure joint scaling limit in $\M^{\infty\bullet}$ as $m\rightarrow\infty$: 
\begin{align*}
	m^{-\frac{\gamma}{\alpha}}\cdot\cA^{\mathrm{tree}}(m) &\underset{n\rightarrow\infty}{\rightarrow}\cS^{\alpha,\gamma}, \quad \text{and} \quad
	m^{-1}\cdot \cA^{\mathrm{loop}}(m) \underset{n\rightarrow\infty}{\rightarrow}\cB^{\alpha,\gamma},
\end{align*}
where the joint law of the limiting objects $(\cS^{\alpha,\gamma},\cB^{\alpha,\gamma})$ is defined below. Moreover, for any $p\geq 0$, we have
\begin{align}\label{growing:eq:control diameter loopspine}
\Ec{\left(\sup_{m\geq 0} \frac{\diam\cA^{\mathrm{tree}}(m)}{m^{\frac{\gamma}{\alpha}}}\right)^p}<+ \infty.
\end{align}
\end{lemma}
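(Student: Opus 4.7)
The proof follows the same template as Lemma~\ref{growing:lem:convergence width splitting}: first we analyse the macroscopic weight split via a two-color Pólya urn, then we identify a Chinese Restaurant Process that governs the proliferation and reinforcement of the distinguished internal vertices, and finally we obtain the shape convergence via the uniform edge-splitting process (Lemma~\ref{growing:lem:convergence edge-splitting}) applied segment by segment and, for the looptree, loop by loop. A preliminary observation makes the picture very concrete: in the $\alpha$--$\gamma$ decomposition, each distinguished point $x_{m+1}$ created by splitting an edge has degree exactly $2$ inside its block (its third tree-edge points into a newly-created child block), so $\cA^{\mathrm{tree}}(m)$ is always a \emph{path} from $\bRho$ to the initial leaf, carrying a growing sequence of marked internal vertices.

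\textbf{Step 1: Pólya urn for $(V(m),E(m))$.} Let $V(m)$ and $E(m)$ be the total weights carried by vertices and by edges. The transitions read: a vertex-pick (probability $V/(V{+}E)$) adds $\alpha$ to $V$ and $0$ to $E$; an edge-pick (probability $E/(V{+}E)$) adds $\alpha-\gamma$ to $V$ and $\gamma$ to $E$. This is a two-color balanced generalized Pólya urn with row-sum $\alpha$ and eigenvalues $\alpha$, $\gamma$. Since $\gamma<\alpha$, the classical theory (cf.\ Theorem~\ref{growing:thm:classical Polya urn} and Lemma~\ref{growing:lem:convergence generalised polya urn}) yields $V(m)/m \to \alpha$ a.s.\ and the existence of a positive random variable $S_E$ with $E(m)/m^{\gamma/\alpha}\to S_E$ a.s.\ and in every $L^p$. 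Since the block is a path, its diameter is at most the number of edges, and the number of edges is bounded by $E(m)/\min(1-\alpha,\gamma)$; this already delivers \eqref{growing:eq:control diameter loopspine}.

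\textbf{Step 2: CRP for the internal vertex weights.} Each edge-pick creates a new internal vertex of initial weight $\alpha-\gamma$ and each subsequent vertex-pick at a given internal vertex reinforces it by $\alpha$. Conditioning on the sequence $(V(m),E(m))$ and on which type of event occurs, the allocation of vertex-picks among internal vertices is that of a Chinese Restaurant Process with parameters $(1-\gamma/\alpha,\cdot)$. The number of tables (internal vertices) $N(m)$ is a time-change of a $\mathrm{CRP}(1-\gamma/\alpha,\cdot)$ and Theorems~\ref{growing:thm:convergence chinese restaurant process} and~\ref{growing:thm:exhangeability chinese restaurant process} give a.s.\ convergence of $N(m)/m^{\gamma/\alpha}$ and of the sorted vector of vertex weights (normalized by $V(m)$) to a $\mathrm{GEM}(1-\gamma/\alpha,\cdot)$ sequence, independent of $S_E$. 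Combining with Lemma~\ref{growing:lem:convergence edge-splitting}, the edge-lengths of the path between consecutive internal vertices converge, after dividing by $m^{\gamma/\alpha}$, to the scaled Dirichlet vector described in the definition of $\cS^{\alpha,\gamma}$. Together with Step 1 and an argument for the positions of the distinguished points (i.i.d.\ uniform under the limiting atomic-plus-length measure, exactly as in the proof of Lemma~\ref{growing:lem:convergence width splitting}), this gives the first convergence in $\M^{\infty\bullet}$.

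\textbf{Step 3: Joint convergence with the looptree.} In $\cA^{\mathrm{loop}}(m)$ each internal vertex $v$ of $\cA^{\mathrm{tree}}(m)$ is replaced by a cycle whose length equals the tree-degree of $v$. Conditionally on the sequence of events, each such cycle grows precisely as a uniform edge-splitting process started from a $3$-cycle: a vertex-pick at $v$ duplicates a uniform edge of its cycle, and the creation event of $v$ itself corresponds to a $3$-cycle being inserted. By Step 1 the weight of every fixed internal vertex grows linearly in $m$, so its cycle length grows linearly as well; Lemma~\ref{growing:lem:convergence edge-splitting} applied loop by loop gives the convergence of each cycle, scaled by $m^{-1}$, to a continuous circle whose total length is the almost sure limit of the rescaled vertex weight. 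A non-explosion argument, using the moment bound of Step 1 and the $(1-\gamma/\alpha)$-diversity of the GEM sequence from Step 2, controls the contribution of small loops uniformly in $m$, so that the joint limit $(\cS^{\alpha,\gamma},\cB^{\alpha,\gamma})$ exists in $\M^{\infty\bullet}\times\M^{\infty\bullet}$.

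\textbf{Main obstacle.} The delicate point is the joint convergence of the countably many loops in Step~3: one needs a uniform-in-$m$ tail bound for the sum of the small loop-diameters, which requires combining the $L^p$-moment bounds from the bivariate urn (Step~1) with the almost sure diversity estimate for the CRP (Step~2). Once this is done, everything reduces to the already-established edge-splitting and urn machinery.
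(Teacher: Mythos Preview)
Your overall architecture (urn/CRP identification plus edge-splitting) is right, but several of the specific claims do not hold, and one of them is the heart of the lemma.

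First, a parameter error and an unnecessary detour. The whole block dynamics is \emph{directly} a Chinese Restaurant Process: with $N_i(m)$ the number of distinguished points equal to the $i$-th created vertex $z_i$, the weight of $z_i$ is $N_i(m)\alpha-\gamma$, the total edge weight is $(1-\alpha)+\gamma K_m$, and the total weight is $m\alpha+(1-\alpha)$. Dividing by $\alpha$ one reads off a $\mathrm{CRP}\bigl(\tfrac{\gamma}{\alpha},\tfrac{1-\alpha}{\alpha}\bigr)$, where ``open a new table'' is exactly ``pick an edge''. So your Step~1 triangular urn is redundant, and your Step~2 parameter $1-\gamma/\alpha$ is wrong (with that choice the number of internal vertices would grow like $m^{1-\gamma/\alpha}$, not $m^{\gamma/\alpha}$). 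Relatedly, the GEM limit and $S$ are \emph{not} independent: $S$ is the $\tfrac{\gamma}{\alpha}$-diversity of the GEM sequence. The moment bound \eqref{growing:eq:control diameter loopspine} then follows from Lemma~\ref{growing:lem:number of tables in a CRP} applied to $K_m$, not from Theorem~\ref{growing:thm:classical Polya urn} or Lemma~\ref{growing:lem:convergence generalised polya urn}, neither of which gives a $m^{\gamma/\alpha}$ scaling or a sup moment.

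The more serious gap is your treatment of the distinguished points. You write that their limiting law is ``i.i.d.\ uniform under the limiting atomic-plus-length measure, exactly as in the proof of Lemma~\ref{growing:lem:convergence width splitting}''. That analogy fails here: in Marchal's block all edges carry the same weight $\alpha-1$, so new vertices land i.i.d.\ uniformly along the length, and the final points are i.i.d.\ under $\mu$. In the $\alpha$--$\gamma$ block the leaf-edge has weight $1-\alpha$ while every internal edge has weight $\gamma$, so the edge-splitting is \emph{not} uniform and the vertex positions $Y_i$ are \emph{not} exchangeable. The correct structure, derived in the paper, is recursive: on the root side of the current maximum one sees a genuine uniform edge-splitting (all weights $\gamma$), while on the leaf side one sees a fresh copy of the whole process; this produces the $\mathrm{Beta}\bigl(1,\tfrac{1-\alpha}{\gamma}\bigr)$-based recursion for $(Y_i)_{i\ge1}$ in the definition of $\cS^{\alpha,\gamma}$. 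Consequently the sequence $(X_k)=(S\cdot Y_{D_k})$ of distinguished points is not i.i.d.\ under any atomic-plus-length measure, and there is no ``scaled Dirichlet vector'' of edge-lengths in the description of $\cS^{\alpha,\gamma}$. Without this piece you cannot identify the limit, so the proof as written does not go through.
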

\paragraph{Joint construction of the limiting blocks.}
\begin{figure}
	\centering
	\begin{tabular}{c}
			\subfloat[The block $\cB^{\alpha,\gamma}$ constructed from a countable number of circles $(C_n)_{n\geq 1}$]{\includegraphics[height=4cm,page=2]{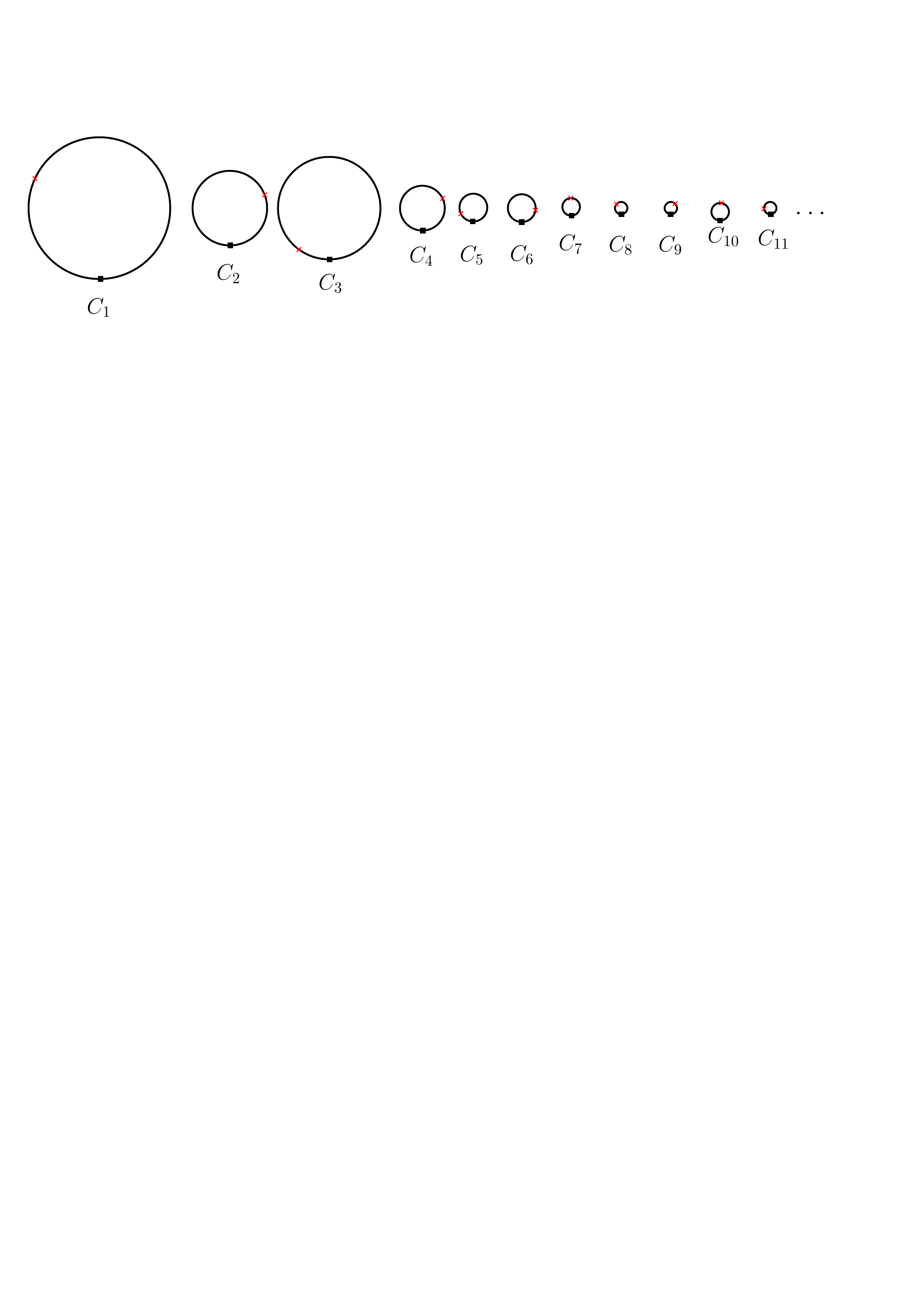}}\\
		\subfloat[The points $(Y_n)_{n\geq 1}$ along the segment $\intervalleff{0}{1}$ ]{\includegraphics[height=4cm,page=3]{Figures/GrowingGraphs/alpha-gamma-looptree.pdf}}
	\end{tabular}
\caption{The block $\cB^{\alpha,\gamma}$ is constructed by agglomerating countably many circles $(C_n)_{n\geq 1}$, in the order given by the relative position of points $(Y_n)_{n\geq 1}$. Since they are dense in $\intervalleff01$, no two circles are ever adjacent. }\label{growing:fig:spine looptree}
\end{figure}
Let us define a random sequence $(Y_n)_{n\geq 1}$ on $\intervalleff{0}{1}$ as follows
\begin{itemize}
	\item Let $Y_1\sim\mathrm{Beta}(1,\frac{1-\alpha}{\gamma})$.
	\item Recursively, if $(Y_1,\dots, Y_n)$ are already defined then conditionally on them the point $Y_{n+1}$ is distributed uniformly on $\intervalleff{0}{\max_{1\leq i\leq n}Y_i}$ with probability $(\max_{1\leq i\leq n}Y_i)$ and as $1-R_n\cdot (1-\max_{1\leq i\leq n}Y_i)$ with complementary probability, with $R_n\sim \mathrm{Beta}(1,\frac{1-\alpha}{\gamma})$ independent of everything else. 
\end{itemize}
Then, if $\gamma=\alpha$, the couple $(\mathcal{S}^{\alpha,\gamma},\mathcal{B}^{\alpha,\gamma})$ is such that $\mathcal{S}^{\alpha,\gamma}=\mathcal{B}^{\alpha,\gamma}$ which are just defined as the interval $\intervalleff{0}{1}$, rooted at $0$ and endowed with the points $(Y_n)_{n\geq 1}$. 

If $\gamma\neq\alpha$, we define the following random variables, independently of the sequence $(Y_n)_{n\geq 1}$. 
\begin{itemize}
	\item We let $(P_i)_{i\geq 1}\sim\mathrm{GEM}(\frac{\gamma}{\alpha},\frac{1-\alpha}{\alpha})$  and $S$ denote its $\frac{\gamma}{\alpha}$-diversity .
	\item Define recursively the sequence $(D_k)_{k\geq 1}$ starting from $D_1=1$. Conditionally on $(D_1,\dots,D_k)$ we have
	\begin{align*}
		D_{k+1}&=i \quad \text{with probability }P_i\quad \text{for any }i\in\{1,\dots,\max_{1\leq i\leq k}D_i\},\\
		&=1+\max_{1\leq i\leq k}D_i\quad \text{with complementary probability.}
	\end{align*}
\item The sequence $(X_k)_{k\geq 1}$ is then defined on the interval $\intervalleff{0}{S}$ as $(S\cdot Y_{D_k})_{k\geq 1}$.
\end{itemize}
The block $\cS^{\alpha,\gamma}$ is then defined as the interval $\intervalleff{0}{S}$ rooted at $0$ endowed with the sequence $(X_k)_{k\geq 1}$. In order to construct $\cB^{\alpha,\gamma}$, we introduce a sequence $(C_i)_{i\geq 1}$ of circles such that for all $i\geq 1$, $(C_i,d_i,\rho_i)$ is a circle with circumference $P_i$ endowed with its path distance and rooted at some point $\rho_i$. Conditionally on that, we take on each $C_i$ a point $U_i$ and a sequence $(V_{i,j})_{j\geq 1}$ of i.i.d. uniform random points on $C_i$.  
Then we consider their disjoint union
\begin{equation}
	\bigsqcup_{i=1}^\infty C_i,
\end{equation}
which we endow with the distance $\dist$ characterised by
\begin{align*}
	\dist(x,y) &=d_i(x,y) \qquad \text{if} \quad x,y\in C_i,\\
	&=d_i(x,U_i) + \sum_{k: Y_i<Y_k<Y_j}d_k(\rho_k,U_k)+d_j(\rho_j,y) \qquad \text{if} \quad x\in C_i,\ y\in C_j,\ Y_i<Y_j.
\end{align*}
Then $\cB^{\alpha,\gamma}$ is defined as the completion of $\bigsqcup_{i=1}^\infty C_i$ equipped with this distance, with distinguished points $(V_{N_k,k})_{k\geq 1}$. Its root $\rho$ can be obtained as a limit $\rho=\lim_{i\rightarrow\infty}\rho_{\sigma_i}$ for any sequence $(\sigma_i)_{i\geq 1}$ for which $Y_{\sigma_i}\rightarrow 0$. 

\begin{proof}[Proof of Lemma~\ref{growing:lem:convergence block tree looptree}]
Recall that at any time $n$ the object \[\cA^{\mathrm{tree}}(n)=(A^{\mathrm{tree}}(n), \dist_{n}^{\mathrm{tree}},\rho^{\mathrm{tree}},(x_i^{\mathrm{tree}})_{1\leq i \leq n})\] is endowed with a list of distinguished points $x_1^{\mathrm{tree}},x_2^{\mathrm{tree}},\dots x_n^{\mathrm{tree}}$, which are not necessarily distinct.  
Let us drop the superscript for readability.
For every $k\geq 1$ let us call $z_k$ the $k$-th vertex of degree $2$ in $\cA^\mathrm{tree}$ in order of creation.
For any $m$, denote $D_m$ the unique integer such that $x_m=z_{D_m}$. Also for any $i\geq 1$, denote 
\begin{align*}
	N_i(n)=\#\enstq{k\in\intervalleentier{1}{n}}{x_k=z_i}, 
\end{align*}
the number of distinguished points among $x_1,x_2,\dots,x_n$ that are equal to $z_i$ and $K_n=\#\enstq{x_i}{1\leq i\leq n}$ the total number of vertices created until time $n$. 
Suppose $\gamma< \alpha$ (the case $\gamma=\alpha$ is easier and follows using only a subset of the following arguments), then from the dynamics of $\cA^{\mathrm{tree}}$, the numbers $(N_i(n),i\geq 1)$ evolve as the number of customers seated at each table in order of creation in a Chinese Restaurant Process with parameters $(\frac{\gamma}{\alpha},\frac{1-\alpha}{\alpha})$.  
Thanks to Theorem~\ref{growing:thm:convergence chinese restaurant process}, the following convergences hold almost surely
 \begin{align*}
 	 \left(\frac{N_i(n),i \geq 1}{n}\right)  \underset{n \longrightarrow \infty}{\overset{\mathrm{a.s.} \text{\emph{ in} }\ell^1}\longrightarrow} \left(P_i,i\geq 1\right) \quad \text{and}\quad \frac{K_n}{n^{\gamma/\alpha} } \underset{n \rightarrow \infty}{\overset{\mathrm{a.s.}}\longrightarrow} S,
 \end{align*}
where $\left(P_i,i\geq 1\right)\sim \mathrm{GEM}(\frac{\gamma}{\alpha},\frac{1-\alpha}{\alpha})$ and $S$ denotes its $\frac{\gamma}{\alpha}$-diversity. Still thanks to Theorem~\ref{growing:thm:exhangeability chinese restaurant process}, conditionally on $\left(P_i,i\geq 1\right)$, the distribution of the sequence $(D_1,D_2,\dots )$ is exactly the one described in the description of $\cS^{\alpha,\gamma}$. 

Since $\cA^{\mathrm{tree}}(n)$ is just a line made of $K_n$ vertices (and so $K_n+1$ edges), the last convergence is enough to prove that, as rooted metric spaces we have the following almost sure convergence in $\M^{\bullet}$ 
\begin{align*}
(A^{\mathrm{tree}}(n),n^{-\frac{\gamma}{\alpha}}\cdot \dist_{n}^{\mathrm{tree}},\rho^{\mathrm{tree}}) \underset{n\rightarrow\infty}{\rightarrow} (\intervalleff{0}{S},\dist_{\intervalleff{0}{S}}, 0),
\end{align*} 
where $\dist_{\intervalleff{0}{S}}$ is the usual Euclidian distance on that interval.

Now let us handle the position of the created vertices along the line. 
When $z_1$ the first vertex of degree $2$ is created, it is adjacent to one edge of weight $1-\alpha$ on the leaf-side and one edge of weight $\gamma$ on the root-side. Every time that a new vertex is created, on either side, it reinforces the weight of that side by creating a new edge of weight $\gamma$. 
We recognize the dynamics of a P\'olya urn, hence the proportion of the number of vertices on the root-side of $z_1$ converges almost surely to some random variable $Y_1\sim \mathrm{Beta}(1,\frac{1-\alpha}{\gamma})$. This means that almost surely
\begin{align*}
	\frac{\dist_{n}^{\mathrm{tree}}(\rho,z_1)}{n^{\frac{\gamma}{\alpha}}}=\frac{\dist_{n}^{\mathrm{tree}}(\rho,z_1)}{K_n}\cdot \frac{K_n}{n^{\frac{\gamma}{\alpha}}} \underset{n\rightarrow\infty}{\rightarrow} Y_1\cdot S.
\end{align*}
Conditionally on $Y_1$, the created vertices $z_2,z_3,z_4,\dots$ are inserted independently on the root-side or the leaf-side of $z_1$, with probability distribution $(Y_1,1-Y_1)$. 
Then, only considering what happens to the root-side of $z_1$, what we observe is a uniform edge-splitting process (all the edges have weight $\gamma$) and so thanks to Lemma~\ref{growing:lem:convergence edge-splitting} the relative position of all those vertices converges and it is uniform on that segment.
Finally, what happens on the leaf-side of $z_1$ has exactly the same distribution has the whole process starting from only one edge with weight $1-\alpha$. 
For every $i\geq 1$, the limit $Y_i=\lim_{k\rightarrow\infty}\frac{\dist_{n}^{\mathrm{tree}}(\rho,z_i)}{K_n}$ then almost surely exists and one can check that this sequence has the same distribution as the sequence of random variables $(Y_1,Y_2,\dots)$ described in the construction of $\mathcal{S}^{\alpha,\gamma}$. Because the sequence $(x_k)_{k\geq 1}$ is exactly given by $(z_{D_k})_{k\geq 1}$ by construction, using all of the above, we get the almost sure convergence in $\M^{\infty\bullet }$
\begin{align*}
\cA^{\mathrm{tree}}(n)=(A^{\mathrm{tree}}(n),n^{-\frac{\gamma}{\alpha}}\cdot \dist_{n}^{\mathrm{tree}},\rho^{\mathrm{tree}},(x_i^{\mathrm{tree}})_{1\leq i \leq n}) \underset{n\rightarrow\infty}{\rightarrow} \cS^{\alpha,\gamma}=(\intervalleff{0}{S},\dist_{\intervalleff{0}{S}}, 0, (S\cdot Y_{D_i})_{i\geq 1}).
\end{align*} 

Let us understand what happens for the corresponding string of loops $\cA^{\mathrm{loop}}$. 
Recall that $\cA^{\mathrm{loop}}(n)$ is also endowed with distinguished points $x_1^{\mathrm{loop}},x_2^{\mathrm{loop}},\dots, x_n^{\mathrm{loop}}$ and let us keep the superscripts for the rest of the proof. 
By construction, the loop that corresponds to vertex $z_i$ at time $n$ contains all the $N_i(n)$ vertices $\enstq{x_j^{\mathrm{loop}}}{x_j^{\mathrm{tree}}=z_i, j\leq n}$ that are of degree $2$, and two other vertices, which are respectively shared with the loop just above and the one just below. 

Now three observations:
first, in the limit $n\rightarrow\infty$, the size of this loop is such that $(N_i(n)+2)\sim nP_i$ almost surely, so when scaling the distance by $n^{-1}$, a circle of length $P_i$ is going to appear in the limit.
Second, starting from the creation of the loop, the addition of every vertex around the loop follows exactly the dynamic of a uniform edge-splitting process, and hence thanks to Lemma~\ref{growing:lem:convergence edge-splitting}, the limiting positions of all the vertices created around the loop are uniform along the length of the circle.
Last, the $\ell^1$ convergence $\left(\frac{N_i(n),i \geq 1}{n}\right)  \underset{n \longrightarrow \infty}{\overset{\mathrm{a.s.} \text{\emph{ in} }\ell^1}\longrightarrow} \left(P_i,i\geq 1\right)$ and the convergence of the number of loops $\frac{K_n}{n^{\gamma/\alpha}}\rightarrow S$ ensure that the total (normalised) length of all the loops created after time $t$ tends to $0$ uniformly in $n$ as $t\rightarrow\infty$, which ensure the a.s.\ relative compactness of the sequence $n^{-1}\cdot \cA^{\mathrm{loop}}(n)$.
From these observations, the identification of the limit is quite straightforward and the details are left to the reader. 
\end{proof}

\paragraph{Convergence results}
We can now express our scaling limit convergences for our processes.
\begin{proposition}\label{growing:prop:convergence alpha gamma tree looptree}
	We have the following joint convergence almost surely in the Gromov--Hausdorff--Prokhorov topology, 
	\begin{align*}
	(T_n^{\alpha,\gamma},n^{-\gamma}\cdot\dist_{\mathrm{gr}},\mu_{\mathrm{unif}}) &\underset{n\rightarrow\infty}{\longrightarrow} \cT^{\alpha,\gamma},\\
	(\mathrm{Loop}(T_n^{\alpha,\gamma}),n^{-\alpha}\cdot \dist_{\mathrm{gr}},\mu_{\mathrm{unif}})&\underset{n\rightarrow\infty}{\longrightarrow} \cL^{\alpha,\gamma}.
	\end{align*}
	The limiting objects can be constructed using an iterative gluing construction with i.i.d.\ blocks using a weight sequence $(\mathsf{m}_n)_{n\geq 1}$ obtained as the increment of a Mittag-Leffler Markov chain $\MLMC(\alpha,1-\alpha)$. 
	The scaling factors are taken as $(\mathsf{m}_n^{\gamma/\alpha})_{n\geq 1}$ for the first one and $(\mathsf{m}_n)_{n\geq 1}$ for the second one, using block i.i.d. block with the same joint distribution as $(\mathcal{S}^{\alpha,\gamma},\mathcal{B}^{\alpha,\gamma})$ defined above. 
\end{proposition}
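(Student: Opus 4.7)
The plan is to apply Theorem~\ref{growing:thm:metatheorem} twice, once for the tree decoration $\cD^{\mathrm{tree},(n)}$ and once for the looptree decoration $\cD^{\mathrm{loop},(n)}$, using the joint decomposition already described in the excerpt (see Figure~\ref{growing:fig:decomposition alpha gamma tree looptree}). The underlying discrete tree $(\ttP_n)_{n\geq 1}$ is common to both decorations, and is the preferential attachment tree $\pa(\mathbf a)$ with the constant fitness sequence $\mathbf a=(\frac{1-\alpha}{\alpha},\frac{1-\alpha}{\alpha},\dots)$. For this sequence one has $c=\frac{1-\alpha}{\alpha}$, $c'=0$, so $\frac{1}{c+1}=\alpha$, and the Mittag-Leffler Markov chain describing the asymptotic degrees is $\MLMC(\alpha,1-\alpha)$, which yields the announced weight sequence $(\mathsf m_n)_{n\geq 1}$.

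Conditions \ref{growing:it:Pn preferential attachment} and \ref{growing:it:Dn processus en deguk} are built into the construction. Condition \ref{growing:it:convergence des processus A} for both decorations, with exponents $\gamma^{\mathrm{tree}}=\gamma/\alpha$ and $\gamma^{\mathrm{loop}}=1$, is exactly the content of Lemma~\ref{growing:lem:convergence block tree looptree}; note that this lemma gives the \emph{joint} convergence, which is what allows us to deduce a joint scaling limit rather than two independent ones. The resulting final exponents in Theorem~\ref{growing:thm:metatheorem} are $\frac{\gamma/\alpha}{1/\alpha}=\gamma$ for the tree and $\frac{1}{1/\alpha}=\alpha$ for the looptree, matching the statement. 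Condition \ref{growing:it:moment condition on the sup of A} on the tree side follows from the bound \eqref{growing:eq:control diameter loopspine} established in Lemma~\ref{growing:lem:convergence block tree looptree}, with $c_k=1$. On the looptree side the condition is in fact automatic: at each step the process $\cA^{\mathrm{loop}}$ gains at most three edges, so $\diam(\cA^{\mathrm{loop}}(m))\leq 3m+O(1)$ deterministically, giving arbitrary moments for $\sup_m \diam(\cA^{\mathrm{loop}}(m))/m$.

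The main obstacle, exactly as in the proof of Proposition~\ref{growing:prop:convergence marchal algo 1}, will be checking condition \ref{growing:it:measures converge} for the uniform measure on the vertices of $T_n^{\alpha,\gamma}$ (and of $\mathrm{Loop}(T_n^{\alpha,\gamma})$, which is handled similarly since the number of vertices in a loop is just the degree of the corresponding vertex). One defines $\bnu^{(n)}$ by placing the same mass on every non-root vertex of every block, normalised so that the associated measure $\nu^{(n)}$ on $\bU$ is a probability; the question is whether $\nu^{(n)}$ converges almost surely to the limiting measure $\mu$ on $\partial\bU$ from Proposition~\ref{growing:prop:convergence measures on wrt}. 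Since the total number of vertices $|V(T_n^{\alpha,\gamma})|$ is random, one analyses the pair (total weight of edges, total weight of vertices) as a two-colour generalised P\'olya urn with replacement matrix determined by the $\alpha$--$\gamma$-rules, and invokes Lemma~\ref{growing:lem:convergence generalised polya urn} to conclude $|V(T_n^{\alpha,\gamma})| \sim \kappa n$ almost surely for an explicit constant $\kappa$. A self-similarity argument applied above every $u\in\bU$ then gives $\nu^{(n)}(T(u))\sim \nu_n(T(u))$, where $\nu_n$ is the uniform measure on $\{u_1,\dots,u_n\}\subset \ttP_n$; combined with $\nu^{(n)}(\{u\})\to 0$ (since the number of distinguished points in a single block is $\deg^+_{\ttP_n}(u)=o(n)$), this yields the desired convergence by Lemma~\ref{growing:characterisation convergence measures}.

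With all hypotheses verified, Theorem~\ref{growing:thm:metatheorem} delivers the almost sure GHP convergences stated, simultaneously for both decorations since the blocks converge jointly and since Theorem~\ref{growing:thm:continuité du recollement} is applied to the same coupled decoration structure. The resulting limits $\cT^{\alpha,\gamma}$ and $\cL^{\alpha,\gamma}$ are described as iterative gluing constructions along the same underlying weighted recursive tree (driven by the $\MLMC(\alpha,1-\alpha)$ increments $(\mathsf m_n)_{n\geq 1}$) with scaling factors $(\mathsf m_n^{\gamma/\alpha})_{n\geq 1}$ respectively $(\mathsf m_n)_{n\geq 1}$, and i.i.d.\ blocks distributed as $\cS^{\alpha,\gamma}$ respectively $\cB^{\alpha,\gamma}$ jointly constructed as in Lemma~\ref{growing:lem:convergence block tree looptree}.
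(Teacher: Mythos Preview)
Your proposal is correct and follows essentially the same approach as the paper's proof: both apply Theorem~\ref{growing:thm:metatheorem} with the constant fitness sequence $\mathbf a=(\tfrac{1-\alpha}{\alpha},\tfrac{1-\alpha}{\alpha},\dots)$, obtain conditions~\ref{growing:it:convergence des processus A} and \ref{growing:it:moment condition on the sup of A} from Lemma~\ref{growing:lem:convergence block tree looptree} (plus the deterministic bound $\diam(\cA^{\mathrm{loop}}(m))\le 3m+1$ on the loop side), and handle condition~\ref{growing:it:measures converge} via the balanced two-colour urn argument and self-similarity exactly as in the proof of Proposition~\ref{growing:prop:convergence marchal algo 1}. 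The paper additionally writes out the replacement matrix $\begin{bmatrix}\alpha & 1-\alpha\\ \alpha-\gamma & 1-\alpha+\gamma\end{bmatrix}$ and the explicit constant $\kappa=1+\tfrac{1-\alpha}{1-\gamma}$, but otherwise your outline matches it step for step.
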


\begin{proof}[Proof of Proposition~\ref{growing:prop:convergence alpha gamma tree looptree}]
The proof of this proposition is another application of Theorem~\ref{growing:thm:metatheorem}. Conditions \ref{growing:it:Pn preferential attachment} and \ref{growing:it:Dn processus en deguk} are satisfied thanks to the discussion in the first paragraph of the section,
condition \ref{growing:it:convergence des processus A} is satisfied thanks to Lemma~\ref{growing:lem:convergence block tree looptree}. 
Condition \ref{growing:it:moment condition on the sup of A} is satisfied for $\cA^{\mathrm{tree}}$ thanks to the second part of Lemma~\ref{growing:lem:convergence block tree looptree}; for  $\cA^{\mathrm{loop}}$, it comes from the fact that the total number of edges in $\cA^{\mathrm{loop}}(n)$ is deterministically smaller than $3n+1$, hence also its diameter.

It remains to check that \ref{growing:it:measures converge} is satisfied for some measure-valued decoration $\bnu^{\mathrm{tree},(n)}$ and $\bnu^{\mathrm{loop},(n)}$ such that $\left(\sG(\cD^{\mathrm{tree},(n)},\bnu^{\mathrm{tree},(n)}), \sG(\cD^{\mathrm{loop},(n)},\bnu^{\mathrm{loop},(n)})\right)$ coincides with $(T_n^{\alpha,\gamma},\mathrm{Loop}(T_n^{\alpha,\gamma}))$ endowed with their graph distance and uniform measure on the vertices. This is, as for all other examples, achieved by charging every vertex of every block $\cD^{(n)}(u)$ of the decoration with the same weight, except its root vertex if $u\neq \emptyset$.

Then the proof goes as the proof of Proposition~\ref{growing:prop:convergence marchal algo 1}: the total weight, of vertices (on one side) and of edges (on the other side) in $T_n^{\alpha,\gamma}$ evolves as a balanced generalised P\'olya urn with replacement matrix
\begin{align*}
\begin{bmatrix}
\alpha & 1-\alpha\\
\alpha-\gamma & 1-\alpha +\gamma
\end{bmatrix}.
\end{align*}
Using Lemma~\ref{growing:lem:convergence generalised polya urn} the total weight of edges is asymptotically $\frac{1-\alpha}{1-\gamma}n$ almost surely. 
Since at time $n$, the total weight of edges adjacent to a leaf is exactly $(1-\alpha)n$, it means that that the total weight of the internal edges is asymptotically $\frac{\gamma(1-\alpha)}{1-\gamma}n$. In the end, the asymptotic number of edges in $T_n^{\alpha,\gamma}$ (and hence also of vertices) is $(1+\frac{1-\alpha}{1-\gamma})n$. This is also true for the number of vertices in $\mathrm{Loop}(T_n^{\alpha,\gamma})$ because by construction, it corresponds to the number of edges in $T_n^{\alpha,\gamma}$.

Going along the same proof as for Proposition~\ref{growing:prop:convergence marchal algo 1}, this is enough to show that asymptotically almost surely, the measures $\nu^{\mathrm{tree},(n)}$ and $\nu^{\mathrm{loop},(n)}$ on $\bU$ satisfy 
\begin{align*}
	\nu^{\mathrm{tree},(n)}(T(u)) \underset{n\rightarrow\infty}{\sim}\nu^{\mathrm{loop},(n)}(T(u)) \underset{n\rightarrow\infty}{\sim} \nu_n(T(u)),
\end{align*}
together with
\begin{align}
	 \nu^{\mathrm{tree},(n)}({u}) \underset{n\rightarrow\infty}{\rightarrow}0 \qquad \text{and} \qquad \nu^{\mathrm{loop},(n)}({u}) \underset{n\rightarrow\infty}{\rightarrow}0 
\end{align}
for every $u\in \bU$, and $\nu_n$ being the uniform measure on the preferential attachment tree $\ttP_n$ associated to the construction. 
This is enough to prove the a.s. convergence of $\nu^{\mathrm{tree},(n)}$ and $\nu^{\mathrm{loop},(n)}$ to $\mu$, which is the weak limit of $\nu_n$. This finishes the proof.
\end{proof}

\paragraph{Remarks on the limiting space.}
When $\gamma=1-\alpha$ then the limiting spaces $(\cT^{\alpha,\gamma},\cL^{\alpha,\gamma})$ are respectively (a constant multiple of) the $\frac{1}{\gamma}$-stable tree and its associated $\frac{1}{\gamma}$-stable looptree, thanks to the convergence results \cite{curien_random_2014,curien_stable_2013}.

\subsubsection{Looptrees constructed using affine preferential attachment}
This last model is very similar to the case of the generalised Rémy's algorithm. We mention it separately because it was the object of a conjecture by Curien, Duquesne, Kortchemski and Manolescu \cite{curien_scaling_2015}, to which we provide a positive answer.

First, for any $\delta>-1$, let us define the model $\mathrm{LPAM}^\delta$ which produces sequences $(T_n^\delta)_{n\geq 1}$ of plane trees. Start with $T_1^\delta$ containing a unique vertex connected to a root by an edge (the root is not considered as a real vertex here). Then, if $T_n^\delta$ is already constructed, take a vertex in the tree at random (the root does not count) with probability proportional to its degree plus $\delta$, then add an edge connected to a new vertex in uniformly chosen corner around this vertex. This yields $T_{n+1}^\delta$. 
\begin{proposition}
We have the following almost sure convergence 
	\begin{align*}
 (\mathrm{Loop}(T_n^{\delta}),	n^{-\frac{1}{2+\delta}}\cdot\dist_{\mathrm{gr}},\mu_{\mathrm{unif}} )\underset{n\rightarrow\infty}{\longrightarrow}\cL^{\delta}
	\end{align*}
in the Gromov--Hausdorff--Prokhorov topology. The limiting object can be constructed using an iterative gluing construction with deterministic blocks equal to a circle with unit circumference, using a sequence of weights and scaling factors $(\mathsf{m}_n)_{n\geq 1}$ obtained as the increment of a Mittag-Leffler Markov chain $\MLMC\left(\frac{1}{2+\delta},\frac{1+\delta}{2+\delta}\right)$.
\end{proposition}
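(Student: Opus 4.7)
The plan is to apply Theorem~\ref{growing:thm:metatheorem} to a natural decomposition of $\mathrm{Loop}(T_n^{\delta})$ along the tree $T_n^{\delta}$ itself. First I would identify the preferential attachment structure: writing $u_k$ for the $k$-th non-root vertex of $T_n^\delta$, its graph degree equals $\deg_{\ttP_n}^+(u_k)+1$, where the $+1$ accounts for the edge to its parent (or to the phantom root when $k=1$). The LPAM rule attaches the next vertex to $u_k$ with probability proportional to this degree plus $\delta$, i.e.\ to $\deg_{\ttP_n}^+(u_k)+(1+\delta)$. Hence $(T_n^{\delta})_{n\geq 1}$ has distribution $\pa(\mathbf a)$ for the constant sequence $\mathbf{a}=(1+\delta,1+\delta,\dots)$, which satisfies \eqref{growing:eq:assum An=cn} with $c=1+\delta$ and $c'=0<\tfrac{1}{2+\delta}$, giving assumption~\ref{growing:it:Pn preferential attachment}.

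Next I would set up the decoration. Each non-root vertex $v$ of $T_n^{\delta}$ corresponds to a cycle in $\mathrm{Loop}(T_n^{\delta})$ whose vertices are the midpoints of the tree-edges incident to $v$, cyclically ordered according to the planar embedding. I let $\cD^{(n)}(u_k)$ be this cycle, rooted at the midpoint of $u_k$'s parent edge (the root-edge if $k=1$) and with the midpoints of its children-edges as distinguished points in order of creation. Adjacent blocks (parent and child) share the blue vertex at their common edge, so $\sG(\cD^{(n)})$ coincides as a metric graph with $\mathrm{Loop}(T_n^{\delta})$. The Markov process $\cA_k$ driving $\cD^{(n)}(u_k)$ as $\deg^+_{\ttP_n}(u_k)$ grows is then the uniform edge-splitting process of Lemma~\ref{growing:lem:convergence edge-splitting} seeded at a single self-loop: at each step a uniformly chosen edge of the current cycle is split by a new vertex, which is appended to the list of distinguished points. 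This matches exactly the LPAM rule of choosing a uniform corner around $u_k$, so assumption~\ref{growing:it:Dn processus en deguk} holds with $\gamma=1$.

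Lemma~\ref{growing:lem:convergence edge-splitting} applied to the self-loop seed gives assumption~\ref{growing:it:convergence des processus A}: the limiting Dirichlet vector collapses to the deterministic value $1$, so $m^{-1}\cdot\cA_k(m)$ converges almost surely in $\M^{\infty\bullet}$ to a circle of unit circumference endowed with an i.i.d.\ sequence of uniform points. Assumption~\ref{growing:it:moment condition on the sup of A} is immediate from the deterministic bound $\diam(\cA_k(m))\leq (m+1)/2$, taking $c_k=0$. For the measure, placing mass $\tfrac{1}{n}$ on each distinguished point of each block, together with $\tfrac{1}{n}$ on the root of $\cD^{(n)}(u_1)$, reproduces the uniform probability measure on the vertex set of $\mathrm{Loop}(T_n^\delta)$; the induced measure on $\bU$ is
\begin{equation*}
\nu^{(n)}(u_k)=\frac{\mathbf{1}_{k=1}+\deg_{\ttP_n}^+(u_k)}{n},
\end{equation*}
which is of the form \eqref{growing:eq:mesures par les degrés} with $b_1=1$ and $b_k=0$ for $k\geq 2$, so that Proposition~\ref{growing:prop:convergence measures on wrt} yields the almost sure weak convergence $\nu^{(n)}\to\mu$ and hence assumption~\ref{growing:it:measures converge}.

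All assumptions being verified, Theorem~\ref{growing:thm:metatheorem} delivers the almost sure Gromov--Hausdorff--Prokhorov convergence at scale $n^{-\gamma/(c+1)}=n^{-1/(2+\delta)}$ to the iterative gluing construction with deterministic unit-circle blocks and with both scaling factors and weights equal to $(\mathsf m^\mathbf a_n)_{n\geq 1}$; since $\mathbf a$ is eventually constant equal to $1+\delta$, the results recalled in Section~\ref{growing:sec:pa and wrt} identify the law of $(\mathsf M^\mathbf a_n)_{n\geq 1}$ as $\MLMC(\tfrac{1}{2+\delta},\tfrac{1+\delta}{2+\delta})$. The only mildly delicate point is ensuring the bookkeeping of the uniform measure is coherent across adjacent blocks so that no blue vertex is double-counted; everything else is a mechanical application of the abstract theorem.
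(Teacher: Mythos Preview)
Your proposal is correct and follows exactly the approach the paper intends: the paper explicitly omits the proof, saying it is ``really close to the one used for the generalised Remy algorithm,'' and your argument is precisely that specialisation---decomposing the looptree into cycles (one per non-root vertex) evolving under the uniform edge-splitting process seeded at a self-loop, identifying $\mathbf a=(1+\delta,1+\delta,\dots)$ so that $c=1+\delta$, and invoking Lemma~\ref{growing:lem:convergence edge-splitting} and Theorem~\ref{growing:thm:metatheorem}. Your bookkeeping of the measure via $b_1=1$, $b_k=0$ is also the natural choice and avoids the double-counting worry you flag at the end.
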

The Hausdorff dimension of $\cL^{\delta}$ is $2 + \delta$ almost surely using \cite[Theorem~1]{senizergues_random_2019}. The proof of this is really close to the one used for the generalised Remy algorithm, so we omit it. 

\appendix
\section{Some useful definitions and results}\label{growing:sec:computations}
This section is devoted to recalling and proving some definitions and results that are used in some of our applications.

\subsection{P\'olya urns}\label{growing:subsection:classical polya urn}

\paragraph{Dirichlet distributions.}  For parameters $a_1, a_2, \ldots, a_n > 0$, the Dirichlet distribution $\mathrm{Dir}(a_1, a_2, \ldots, a_n)$ has density
\[
\frac{\Gamma(\sum_{i=1}^n a_i)}{\prod_{i=1}^n \Gamma(a_i)} \prod_{j=1}^{n} x_i^{a_j-1}
\]
with respect to the Lebesgue measure on the simplex $\{(x_1, \ldots, x_n) \in [0,1]^n: \sum_{i=1}^n x_i = 1\}$.
In the case $n=2$, a random variable with distribution $\mathrm{Dir}(a_1,a_2)$ can be written as $(B,1-B)$, and $B$ is said to have distribution $\mathrm{Beta}(a_1,a_2)$. 

\paragraph{Convergence and exchangeability.}
\begin{theorem}\label{growing:thm:classical Polya urn}
Consider an urn model with $k$ colours labelled from $1$ to $k$, with initial weights $a_1,\ldots,a_k>0$ respectively. At each step $n\geq 1$, draw a colour with a probability proportional to its weight and add weight $\beta$ to this colour; we let $D_n$ be the label of the drawn colour. 
Let $X^{(1)}_n,\ldots, X^{(k)}_n$ denote the weights of the $k$ colours after $n$ steps. Then,
\begin{enumerate}
	\item we have the following convergence \[
	\left(\frac{X^{(1)}_n}{\beta n},\ldots,\frac{X^{(k)}_n}{\beta n}\right)  \underset{n \rightarrow \infty}{\overset{\mathrm{a.s.}}\longrightarrow} (X^{(1)},\ldots,X^{(k)})
	\]
	where $(X^{(1)},\ldots,X^{(k)}) \sim \mathrm{Dir}(\frac{a_1}{\beta}, \ldots, \frac{a_k}{\beta})$. 
	\item Conditionally on the limiting proportions $(X^{(1)},\ldots,X^{(k)})$, the sequence of draws $D_1,D_2,\dots $ is i.i.d.\ such that
	\begin{align*}
		\Ppsq{D_1=i}{X^{(1)},\ldots,X^{(k)}}=X^{(i)},
	\end{align*}
	for all $1\leq i \leq k$.
\end{enumerate}
\end{theorem}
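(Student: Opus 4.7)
I would attack the two parts in parallel by combining a martingale argument with an exchangeability/de Finetti argument. Let $S_n := \sum_{j=1}^k a_j + n\beta$ (deterministic) and set $M_n^{(i)} := X_n^{(i)}/S_n$. A one-line computation gives $\mathbb{E}[X_{n+1}^{(i)}\mid \mathcal F_n] = X_n^{(i)}(1+\beta/S_n) = X_n^{(i)}S_{n+1}/S_n$, so each $M_n^{(i)}$ is a $[0,1]$-valued martingale and therefore converges almost surely to a limit $X^{(i)}$ with $\sum_i X^{(i)}=1$ a.s. This handles the convergence claim in part (i) modulo identifying the joint law.

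Next I would establish exchangeability of $(D_n)_{n\geq 1}$ by a direct telescoping computation: writing $n_i$ for the number of times colour $i$ appears in a deterministic sequence $(d_1,\dots,d_n)$,
\begin{equation*}
\mathbb{P}(D_1=d_1,\dots,D_n=d_n) = \frac{\prod_{i=1}^{k} \prod_{j=0}^{n_i-1}(a_i+j\beta)}{\prod_{m=0}^{n-1}(S_0+m\beta)},
\end{equation*}
which depends only on the multiset of counts. By de Finetti's theorem there exists a random probability vector $Y=(Y^{(1)},\dots,Y^{(k)})$ on the simplex such that, conditionally on $Y$, the $D_n$ are i.i.d. with $\mathbb{P}(D_1=i\mid Y)=Y^{(i)}$. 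The strong law of large numbers applied conditionally gives $n_i(n)/n \to Y^{(i)}$ a.s., while on the urn side $n_i(n)/n = (X_n^{(i)}-a_i)/(n\beta) \to X^{(i)}$ a.s. from the martingale step. Hence $Y=X$ a.s., which is exactly the content of part (ii) once the law of $X$ is known.

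It remains to identify the law of $X$ as $\mathrm{Dir}(a_1/\beta,\dots,a_k/\beta)$, and this is the step I expect to be the main (small) obstacle. The cleanest route is to recognise the explicit product of Pochhammer symbols displayed above as the Pólya--Eggenberger formula, which coincides exactly with
\begin{equation*}
\mathbb{E}\!\left[\prod_{i=1}^{k}(Z^{(i)})^{n_i}\right] \quad \text{for}\quad Z\sim \mathrm{Dir}(a_1/\beta,\dots,a_k/\beta),
\end{equation*}
via the standard Beta-function moment identity. Since the moments of a compactly supported distribution on the simplex determine it, we conclude $X\sim\mathrm{Dir}(a_1/\beta,\dots,a_k/\beta)$. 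Combined with the previous paragraph, this simultaneously identifies the almost sure limit in (i) and gives the conditional i.i.d. statement of (ii).
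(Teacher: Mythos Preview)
Your argument is correct and is in fact the standard textbook proof of this classical result: bounded martingales for the a.s.\ convergence, the explicit exchangeable law of $(D_1,\dots,D_n)$ for de Finetti, and identification of the mixing measure via the Dirichlet moment formula. There is nothing to compare against, however: in the paper this theorem sits in the appendix as a recalled classical fact and is stated without proof. So your write-up supplies a proof where the paper gives none; it would serve perfectly well as a short self-contained justification if one wanted to include it.
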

\paragraph{Balanced generalised P\'olya urns.}
Consider the following urn model with two colours, which depends on four positive real numbers $a,b,c,d>0$. Starting from an initial condition, the weight $(X_n,Y_n)$ of the two colours in the urn evolve in the following way:
at each step, draw a colour from the urn with probability proportional to its weight in the urn. If colour $1$ is drawn, add $a$ to the weight of colour $1$ and $b$ to the weight of colour $2$.  If colour $2$ is drawn, add $c$ to the weight of colour $1$ and $b$ to the weight of colour $2$.
The matrix $M=\begin{bmatrix}
a & b\\
c & d
\end{bmatrix}$ is called the replacement matrix of the urn.

We suppose that the urn is \emph{balanced}, meaning that $a+b=c+d$, and we call $\lambda_1$ the eigenvalue of $M$ with largest modulus, being here equal to $a+b$.
Let us now state a lemma that follows from the application of the results of \cite{athreya_embedding_1968} to our setting.
\begin{lemma}\label{growing:lem:convergence generalised polya urn}
For any initial weight configuration, we have the following almost sure convergence
\begin{align*}
	\left(\frac{X_n}{n(a+b)},\frac{Y_n}{n(a+b)}\right) \longrightarrow v_1,
\end{align*} 
where $v_1$ is the left eigenvector associated to $\lambda_1$, normalized to have components that sum to $1$.
\end{lemma}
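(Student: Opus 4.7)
The plan is to follow the classical embedding approach of \cite{athreya_embedding_1968}. I would introduce an auxiliary two-type continuous-time Markov branching process $(\tilde X(t), \tilde Y(t))_{t\geq 0}$ starting from $(X_0, Y_0)$ in which, at time $t$, independent events of type $1$ and type $2$ occur at respective rates $\tilde X(t)$ and $\tilde Y(t)$; upon a type-$i$ event the vector $(\tilde X, \tilde Y)$ is incremented by the $i$-th row of $M$. Standard properties of competing exponential clocks show that the sequence of types drawn at the successive jumps $(\tau_n)_{n\geq 1}$ has the same distribution as the discrete urn sequence $(D_n)_{n\geq 1}$, so that $(\tilde X(\tau_n), \tilde Y(\tau_n))$ has the same law as $(X_n, Y_n)$.

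The second step is to analyse $M$. Because $a,b,c,d>0$, the matrix $M$ has strictly positive entries and is irreducible, its top eigenvalue is $\lambda_1 = a+b$ (simple, with positive left eigenvector $v_1$, which upon normalisation equals $(c,b)/(b+c)$), and its other eigenvalue $\lambda_2 = a-c = d-b$ satisfies $|\lambda_2|<\lambda_1$. The classical Athreya--Karlin / Kesten--Stigum almost sure convergence theorem for supercritical irreducible multitype Markov branching processes then yields a non-negative random variable $W$ such that
\begin{equation*}
e^{-\lambda_1 t}\bigl(\tilde X(t), \tilde Y(t)\bigr) \underset{t\to\infty}{\overset{\mathrm{a.s.}}{\longrightarrow}} W \cdot v_1,
\end{equation*}
and, since the initial weights are positive and $M$ is irreducible, $W>0$ almost surely. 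In particular the normalised proportions $(\tilde X(t), \tilde Y(t))/(\tilde X(t)+\tilde Y(t))$ converge almost surely to $v_1$.

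The third step transfers this back to the discrete urn using the \emph{balance} hypothesis. Since $a+b=c+d=\lambda_1$, each draw increases the total weight deterministically by $\lambda_1$, so that $X_n+Y_n = X_0+Y_0+n\lambda_1$ for all $n\geq 0$. Evaluating the continuous-time convergence at the (almost surely unbounded) jump times $(\tau_n)_{n\geq 1}$ then gives
\begin{equation*}
\frac{(X_n, Y_n)}{n(a+b)} = \frac{X_n+Y_n}{n\lambda_1}\cdot \frac{(\tilde X(\tau_n), \tilde Y(\tau_n))}{\tilde X(\tau_n)+\tilde Y(\tau_n)} \underset{n\to\infty}{\overset{\mathrm{a.s.}}{\longrightarrow}} v_1,
\end{equation*}
which is exactly the conclusion of the lemma.

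The only non-routine point is making sure that the Athreya--Karlin embedding and convergence result are invoked in the generality that allows \emph{real-valued} weights and replacement entries, rather than only the integer-valued branching processes of the most classical statements. This extension is standard (it suffices to view the weight of each colour as the intensity of an inhomogeneous Poisson process driving the next event of that type, as discussed in \cite{athreya_embedding_1968}), and once it is in hand the rest of the argument reduces to combining the branching-process limit with the deterministic growth of the total weight provided by balance.
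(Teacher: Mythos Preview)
Your proposal is correct and follows exactly the approach the paper intends: the paper does not give a proof of this lemma but simply states that it ``follows from the application of the results of \cite{athreya_embedding_1968} to our setting,'' and your sketch is a faithful unpacking of that citation via the continuous-time embedding, the spectral analysis of $M$, and the use of balance to pass from proportions back to the $n(a+b)$ normalisation. Your computation of $v_1=(c,b)/(b+c)$ and $\lambda_2=a-c=d-b$ is correct, and your caveat about real-valued weights is appropriate and standard.
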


\subsection{Chinese Restaurant Processes}\label{growing:subsection:chinese restaurant process}
\paragraph{Generalized Mittag-Leffler distributions.} Let $0<\alpha<1$, $\theta>-\alpha$. The {generalized Mittag-Leffler distribution} $\mathrm{ML}(\alpha,\theta)$ is characterised by its moments. For $M\sim \mathrm{ML}(\alpha,\theta)$ and any $p \in \mathbb R_+$ we have, 
\[
\mathbb E\left[ M^p\right]=\frac{\Gamma(\theta) \Gamma(\theta/\alpha + p)}{\Gamma(\theta/\alpha) \Gamma(\theta + p \alpha)}=\frac{\Gamma(\theta+1) \Gamma(\theta/\alpha + p+1)}{\Gamma(\theta/\alpha+1) \Gamma(\theta + p \alpha+1)}.
\]
\paragraph{GEM and PD distribution.}
Let $0<\alpha<1$, $\theta>-\alpha$ and for $i \ge 1$, let $B_i \sim \mathrm{Beta}(1-\alpha, \theta + i \alpha)$ independently.  Then the sequence $(P_i)_{i\geq 1}$ where $P_i=B_i \prod_{k=1}^{i-1} (1 - B_k)$ has the $\mathrm{GEM}(\alpha,\theta)$ distribution. 
The reordered sequence $(P_i^{\downarrow})_{i \ge 1}$ in non-increasing order is said to have the $\mathrm{PD}(\alpha,\theta)$ distribution.
In this setting the limit $ W := \Gamma(1 - \alpha) \lim_{i \to \infty} i (P_i^{\downarrow})^{\alpha}$ almost surely exists and is said to be the \emph{$\alpha$-diversity} of the sequence $(P_i)_{i\geq 1}$. It has the $\mathrm{ML}(\alpha,\theta)$ distribution (see \cite{pitman_combinatorial_2006}).

\paragraph{Chinese Restaurant Process.}
Fix two parameters $\alpha \in (0,1)$ and $\theta>-\alpha$. Let us introduce the so-called Chinese restaurant process with seating plan $(\alpha,\theta)$. We refer to \cite{pitman_combinatorial_2006} for the definition and properties of this process.
The process starts with one table occupied by one customer and then evolves in a Markovian way as follows: given that at stage $n$ there are $k$ occupied tables with $n_i$ customers at table $i$, a new customer is placed at table $i$ with probability $(n_i-\alpha)/(n+\theta)$ and placed at a new table with probability $(\theta+k\alpha)/(n+\theta)$. 

Let $N_i(n),i\geq 1$ be the number of customers at table $i$ at stage $n$. Let also $K_n$ denote the number of occupied tables at stage $n$ and $D_n$ the number of the table at which the $n$-th costumer sits.
The following theorems follow from \cite[Chapter~3]{pitman_combinatorial_2006}.
\begin{theorem}\label{growing:thm:convergence chinese restaurant process}
In this setting we have the following convergences
\begin{equation*}
\left(\frac{N_i(n),i \geq 1}{n}\right)  \underset{n \longrightarrow \infty}{\overset{\mathrm{a.s.} \text{\emph{ in} }\ell^1}\longrightarrow} \left(P_i,i\geq 1\right) \quad \text{and}\quad \frac{K_n}{n^{\alpha} } \underset{n \rightarrow \infty}{\overset{\mathrm{a.s.}}\longrightarrow} W,
\end{equation*}
where $\left(P_i,i\geq 1\right)$ follows a $\mathrm{GEM}(\alpha,\theta)$-distribution and $W$ is its $\alpha$-diversity.
The sequence $(Q_j,j\geq 1)=\left(P_i^\downarrow,i\geq 1\right)$, defined as the non-increasing rearrangement of $\left(P_i,i\geq 1\right)$ has then the $\mathrm{PD}(\alpha,\theta)$-distribution.
\end{theorem}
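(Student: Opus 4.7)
The plan is to follow the classical martingale and stick-breaking approach from Pitman's book. The starting point is the a.s.\ convergence of $N_i(n)/n$ for each fixed $i$. For this, consider the partial sum $S^{(i)}_n := N_1(n)+\cdots+N_i(n)$ and observe that, once the first $i$ tables are occupied, the transition rule gives
\[
\mathbb{E}\bigl[S^{(i)}_{n+1}-i\alpha \mid \mathcal{F}_n\bigr] = (S^{(i)}_n - i\alpha)\cdot\frac{n+\theta+1}{n+\theta},
\]
so that $M^{(i)}_n := (S^{(i)}_n - i\alpha)/(n+\theta)$ is a bounded non-negative martingale. It converges a.s.\ to a limit $U_i$, and setting $P_i := U_i - U_{i-1}$ (with $U_0 := 0$) yields $N_i(n)/n \to P_i$ a.s.\ for every $i$.

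The next step is to identify the joint law of $(P_i)_{i\geq 1}$ as $\mathrm{GEM}(\alpha,\theta)$. The key structural fact is the self-similarity of the CRP: conditionally on the evolution of table $1$, the customers who sit at tables other than table $1$ form, after an obvious time change, a CRP with parameters $(\alpha,\theta+\alpha)$. I would first prove $P_1 \sim \mathrm{Beta}(1-\alpha,\theta+\alpha)$ by computing integer moments of $M^{(1)}_n$ directly from the transition rule (a routine product telescoping gives $\mathbb{E}[(M^{(1)}_n)^p]$ in closed form, and the limiting values match those of $\mathrm{Beta}(1-\alpha,\theta+\alpha)$). The self-similarity then allows an inductive argument: conditionally on $P_1$, the rescaled sequence $(P_{i+1}/(1-P_1))_{i\geq 1}$ has the same law as the analogous GEM limit for parameters $(\alpha,\theta+\alpha)$, which is exactly the recursive defining property of $\mathrm{GEM}(\alpha,\theta)$.

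The $\ell^1$ convergence is then obtained from the pointwise a.s.\ convergence together with the normalisation $\sum_i N_i(n)/n = 1$ and $\sum_i P_i = 1$ a.s. (the latter follows, for instance, from $\mathbb{E}[\sum_i P_i] = 1$ by Beppo Levi on the stick-breaking representation). Since both sides are probability vectors and convergence is componentwise a.s., Scheffé's lemma applied pathwise gives $\ell^1$ convergence almost surely.

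For the second convergence $K_n/n^\alpha \to W$, the natural approach is a separate martingale analysis for the number of tables. Using that, conditionally on $K_{n-1}=j$, a new table is opened at step $n$ with probability $(\theta+j\alpha)/(n-1+\theta)$, one verifies that an appropriate ratio of Gamma functions, e.g.\ $\Gamma(K_n+\theta/\alpha)\,\Gamma(n+\theta)^{-\alpha}$ times a constant, is a martingale; its a.s.\ limit $W$ has moments matching those of $\mathrm{ML}(\alpha,\theta)$ given by \eqref{growing:eq:moments mittag-leffler}. Finally, one must identify this $W$ with the $\alpha$-diversity $\Gamma(1-\alpha)\lim_i i(P^{\downarrow}_i)^{\alpha}$ of the reordered sequence; this I would do by a size-biased picking argument, comparing the contribution of the first $K_n$ tables in order of discovery with the ordered sizes $P^{\downarrow}_i$. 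The PD statement for $(P^{\downarrow}_i)$ then follows by definition as the decreasing rearrangement of a $\mathrm{GEM}(\alpha,\theta)$ sequence. The main obstacle I anticipate is the moment/diversity identification step: matching the martingale limit for $K_n/n^\alpha$ to the diversity of $(P^{\downarrow}_i)$ pathwise requires more care than the martingale-convergence steps and is the point where the argument genuinely relies on detailed properties of Poisson--Dirichlet laws.
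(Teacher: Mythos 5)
The paper does not prove this theorem at all: it is recalled verbatim from \cite[Chapter~3]{pitman_combinatorial_2006}, so your proposal has to be measured against the classical arguments that it implicitly reconstructs, and most of your steps do check out. The martingale $M^{(i)}_n=(S^{(i)}_n-i\alpha)/(n+\theta)$ is indeed a bounded non-negative martingale from the (a.s.\ finite) time at which table $i$ opens, giving a.s.\ convergence of $N_i(n)/n$; the self-similarity of the seating plan (the customers not at table $1$ form an $(\alpha,\theta+\alpha)$ CRP, independent of the table-$1$/rest choice sequence) plus induction is exactly the stick-breaking identification of the limit as $\mathrm{GEM}(\alpha,\theta)$; and Scheffé's lemma applied pathwise, together with $\sum_i P_i=1$ a.s., does upgrade coordinatewise convergence to $\ell^1$ convergence. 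Two small repairs: plain powers of $M^{(1)}_n$ do not telescope, but the rising-factorial functionals $\prod_{j=0}^{p-1}(N_1(n)-\alpha+j)\big/\prod_{j=0}^{p-1}(n+\theta+j)$ do, and their limits match the $\mathrm{Beta}(1-\alpha,\theta+\alpha)$ moments; similarly the correct table-count martingale is $\frac{\Gamma(n+\theta)}{\Gamma(n+\theta+\alpha)}\,\bigl(K_n+\theta/\alpha\bigr)$, with rising-factorial analogues for the higher moments, rather than the expression $\Gamma(K_n+\theta/\alpha)\Gamma(n+\theta)^{-\alpha}$ you wrote down (which has the wrong order of magnitude in both variables), though this is clearly what you intend.

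The genuine gap is the final step, which you flag but do not close. The martingale and moment computation shows that $K_n/n^{\alpha}$ converges a.s.\ and that its limit has the $\mathrm{ML}(\alpha,\theta)$ law; the theorem asserts more, namely that this a.s.\ limit coincides, realisation by realisation, with the $\alpha$-diversity $\Gamma(1-\alpha)\lim_j j\,(P_j^{\downarrow})^{\alpha}$ of the very sequence $(P_i)_{i\geq1}$ obtained in the first part. Equality in distribution does not give this pathwise identity, and the proposed ``size-biased picking'' comparison of the first $K_n$ tables with the ranked sizes is not yet an argument. The standard way to finish is via the paintbox description recalled in the theorem that follows this one in the appendix: conditionally on the ranked frequencies $(Q_j)_{j\geq1}$, the successive customers choose labels i.i.d.\ with probabilities $(Q_j)_{j\geq 1}$ and $K_n$ is the number of distinct labels among the first $n$ samples; Karlin's strong law for infinite occupancy schemes, applied conditionally on $(Q_j)_{j\geq1}$ (whose tail satisfies $Q_j\sim (W/\Gamma(1-\alpha))^{1/\alpha}j^{-1/\alpha}$), then yields $K_n/n^{\alpha}\to\Gamma(1-\alpha)\lim_j jQ_j^{\alpha}=W$ a.s., which is the required identification. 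Alternatively one can simply invoke \cite[Theorem~3.8]{pitman_combinatorial_2006}, which is in effect what the paper does. With that ingredient supplied, your outline is a correct reconstruction of the cited proof; without it, you have only proved convergence to \emph{some} $\mathrm{ML}(\alpha,\theta)$ random variable.
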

The following result allows us to describe the distribution of the process, conditionally on the limiting size of the tables.
\begin{theorem}\label{growing:thm:exhangeability chinese restaurant process}
In the setting of the previous theorem, conditionally on the sequence $\left(P_i,i\geq 1\right)$, the distribution of $(D_n)_{n\geq 1}$ can be described as follows: 
\begin{enumerate}
	\item $D_1=1$ almost surely.
	\item Conditionally on $D_1,\dots,D_n$, we have 
	\begin{align*}
D_{n+1}&=k \qquad \text{with probability } P_k, \text{ for any } 1\leq k\leq \max_{1\leq i\leq n}D_i,\\
&= 1+\max_{1\leq i\leq n}D_i \qquad \text{with complementary probability}.	
	\end{align*}
\end{enumerate}

Also, conditionally on $\left(Q_j, j\geq 1\right)$, the distribution of $(D_n)_{n\geq 1}$ can be described as follows: 
\begin{enumerate}
	\item Let $(I_n)_{n\geq 1}$ be i.i.d.\ with distribution $\Pp{I_1=k}=Q_k$,
	\item $D_1=1$ almost surely,
	\item conditionally on $I_1,\dots,I_n$, we have 
	\begin{align*}
	D_{n+1}=\begin{cases*}
1+\max_{1\leq i\leq n}D_i \qquad \text{ if } I_{n+1}\notin \{I_1,I_2,\dots ,I_n\},\\
D_{J_n} \text{ for }J_n=\inf\enstq{k\geq 1}{I_{n+1}=I_k} \qquad\text{otherwise}.	
\end{cases*}
	\end{align*}
\end{enumerate}
\end{theorem}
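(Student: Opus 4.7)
The plan is to first establish the second characterization via Kingman's paintbox theorem for exchangeable random partitions, and then derive the first characterization from the second by invoking the size-biased sampling relationship between $\mathrm{PD}(\alpha,\theta)$ and $\mathrm{GEM}(\alpha,\theta)$.

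For the second characterization, I would start by noting that the random partition $\Pi_n$ of $\{1,\ldots,n\}$ whose blocks are the tables (so $i \sim j$ iff $D_i = D_j$) is \emph{exchangeable}: a direct computation from the Markovian description of the CRP shows that the probability of any given partition depends only on its block sizes, being given by the Ewens--Pitman sampling formula. Kingman's paintbox theorem then yields a random sequence of asymptotic block frequencies, summing to at most $1$, such that conditionally on these frequencies, the partition is recovered by sampling i.i.d.\ labels $(I_n)_{n\ge 1}$ according to these frequencies and declaring two integers equivalent iff they share a label. By Theorem~\ref{growing:thm:convergence chinese restaurant process}, the non-increasing rearrangement of the frequencies has the $\mathrm{PD}(\alpha,\theta)$ distribution and satisfies $\sum_j Q_j = 1$ almost surely, so there is no dust. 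Interpreting each ``label'' as a table indexed in the order of its first appearance in $(I_n)$ then gives exactly the second characterization.

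To derive the first characterization, I would use the fact that the sequence of tables in order of first appearance in the paintbox corresponds to a \emph{size-biased permutation} of $(Q_j)$: setting $P_i := Q_{I_{J_i}}$, where $J_i$ is the step at which the $i$-th new table appears, the classical identity $(P_i)_{i\ge 1} \sim \mathrm{GEM}(\alpha,\theta)$ from Pitman's book applies. Given $(P_i)$ together with $D_1,\ldots,D_n$ and $K_n := \max_{i\le n} D_i$, customer $n+1$ joins table $k \le K_n$ in the paintbox iff $I_{n+1} = I_{J_k}$, which has conditional probability $Q_{I_{J_k}} = P_k$; the complementary event of opening table $K_n + 1$ then has conditional probability $1 - \sum_{k=1}^{K_n} P_k$. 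The main obstacle is to justify that these conditional probabilities depend only on $(P_i)$ and not on the extra information contained in $(Q_j)$ and the random permutation that identifies it with $(P_i)$; this is a standard consequence of the consistency of size-biased representations in Poisson--Dirichlet theory, and together with Kingman's theorem is the key classical input taken from Pitman's book.
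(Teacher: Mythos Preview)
The paper does not give its own proof of this theorem: it is stated in the appendix as a known result, with the sentence ``The following theorems follow from \cite[Chapter~3]{pitman_combinatorial_2006}'' immediately preceding it. So there is no argument in the paper to compare against beyond the citation.

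Your proposed outline is correct and is essentially the standard route taken in Pitman's book: exchangeability of the CRP partition via the Ewens--Pitman sampling formula, Kingman's paintbox representation conditional on the ranked frequencies $(Q_j)$, and then the size-biased permutation identity linking $\mathrm{PD}(\alpha,\theta)$ to $\mathrm{GEM}(\alpha,\theta)$ to pass to the description in terms of $(P_i)$. The only point worth tightening is the last step, where you note the ``main obstacle'' of showing that the conditional law given $(P_i)$ and $D_1,\dots,D_n$ does not retain extra information from $(Q_j)$ and the random bijection. This is indeed handled cleanly in Pitman's Chapter~3 via the stick-breaking construction: one checks directly that the residual mass $1-\sum_{k=1}^{K_n}P_k$ and the already-discovered $P_1,\dots,P_{K_n}$ are jointly sufficient for the next step, since the paintbox restricted to the undiscovered labels is again a paintbox with total mass $1-\sum_{k\le K_n}P_k$, independent of which particular $Q_j$'s have been uncovered. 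With that made explicit, your argument is complete and matches the cited reference.
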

For any $\alpha \in (0,1)$ and $\theta>-\alpha$, the law of this evolving configuration of customers around the different tables using the $(\alpha,\theta)$ seating plan is denoted by $\P_{\alpha,\theta}$. The following result is expressed for the canonical process under the probability measure $\P_{\alpha,\theta}$.
\begin{lemma}\label{growing:lem:number of tables in a CRP}
	For every $p\geq 1$, we have
	\begin{equation}
		\mathbb{E}_{\alpha,\theta}\left[\sup_{n\geq 1}\left(\frac{K_n}{n^{\alpha}}\right)^p\right]<\infty.
	\end{equation}
\end{lemma}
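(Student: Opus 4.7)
The plan is to exhibit a natural martingale attached to $K_n$ and then invoke Doob's $L^p$ maximal inequality. First I would set $Y_n := K_n + \theta/\alpha$; since $K_n \geq 1$ and $\theta/\alpha > -1$ (as $\theta > -\alpha$), this quantity is strictly positive. The CRP dynamics give conditional probability $(\theta + \alpha K_n)/(n+\theta) = \alpha Y_n/(n+\theta)$ for opening a new table at step $n+1$, whence
$$\E[Y_{n+1}\mid \mathcal{F}_n] = Y_n \cdot \frac{n+\theta+\alpha}{n+\theta}.$$
Thus $M_n := Y_n/a_n$ is a non-negative martingale for
$$a_n := \prod_{k=1}^{n-1}\frac{k+\theta+\alpha}{k+\theta} = \frac{\Gamma(1+\theta)\,\Gamma(n+\theta+\alpha)}{\Gamma(1+\theta+\alpha)\,\Gamma(n+\theta)},$$
which satisfies $a_n \sim c_{\alpha,\theta}\cdot n^\alpha$ by Stirling.

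Next I would establish the $L^p$-boundedness $\sup_n \E[M_n^p] < \infty$ for every $p \geq 1$. For integer $p$, conditioning on whether a new table opens gives the exact recursion
$$\E[Y_{n+1}^p\mid \mathcal{F}_n] = Y_n^p\!\left(1 + \frac{p\alpha}{n+\theta}\right) + \frac{\alpha}{n+\theta}\sum_{k=0}^{p-2}\binom{p}{k}Y_n^{k+1}.$$
An induction on $p$ combined with a discrete variation-of-constants argument then yields $\E[Y_n^p] = O(n^{p\alpha})$: the induction hypothesis bounds the remainder sum by $O(n^{(p-1)\alpha})$, which is strictly lower order than $n^{p\alpha}$ since $\alpha > 0$, so it is absorbed by the homogeneous part of the recursion. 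Jensen's inequality then extends the bound from integer exponents to arbitrary $p \geq 1$.

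Finally, for $p > 1$, Doob's $L^p$ maximal inequality gives
$$\E\left[\sup_{n\geq 1} M_n^p\right] \leq \left(\frac{p}{p-1}\right)^p \sup_n \E[M_n^p] < \infty,$$
while the case $p = 1$ follows from the case $p = 2$ by Cauchy--Schwarz. Since $a_n \sim c_{\alpha,\theta}\, n^\alpha$ and $K_n \leq Y_n + |\theta|/\alpha$, this transfers directly to the desired bound $\E_{\alpha,\theta}[\sup_n (K_n/n^\alpha)^p] < \infty$. The only genuine technical ingredient is the $L^p$-boundedness step; everything else is a routine application of Doob's inequality once the correct shift $\theta/\alpha$ and compensating sequence $a_n$ are identified.
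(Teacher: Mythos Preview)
Your proof is correct, but it follows a genuinely different route from the paper's. The paper exploits Pitman's change-of-measure identity: under $\P_{\alpha,0}$, the likelihood ratio $M_{\alpha,\theta,n}=f_{\alpha,\theta}(K_n)/f_{1,\theta}(n)$ (with $f_{\alpha,\theta}(k)=\Gamma(\theta/\alpha+k)/[\Gamma(\theta/\alpha+1)\Gamma(k)]$) is a martingale bounded in every $L^p$, and is comparable to $(K_n/n^\alpha)^\theta$. Taking $\theta=1$ in this family gives a martingale comparable to $K_n/n^\alpha$ itself; the paper then applies Doob's inequality under $\P_{\alpha,0}$ and transfers the estimate to $\P_{\alpha,\theta}$ via the Radon--Nikodym density $M_{\alpha,\theta}$ and Cauchy--Schwarz. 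By contrast, you stay under $\P_{\alpha,\theta}$ throughout, identify the explicit compensator $a_n$ for the shifted count $K_n+\theta/\alpha$, and prove $L^p$-boundedness from scratch by an elementary moment recursion and induction on $p$. The paper's argument is shorter once one imports the $L^p$-boundedness of Pitman's density martingales from \cite{pitman_combinatorial_2006}; your argument is more self-contained and avoids both the change of measure and the external reference, at the cost of the (routine) induction step.
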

\begin{proof}
Let $f_{\alpha,\theta}(k):=\frac{\Gam{\theta/\alpha+k}}{\Gam{\theta/\alpha+1}\Gam{k}}$, and $\cF_n$ the $\sigma$-field generated by the $n$ first steps of the process, then 
\begin{equation*}
\left(	\frac{d\P_{\alpha,\theta}}{d\P_{\alpha,0}}\right)_{\vert_{\cF_n}}=\frac{f_{\alpha,\theta}(K_n)}{f_{1,\theta}(n)}=M_{\alpha,\theta,n},
\end{equation*}
which is a martingale under $\P_{\alpha,0}$ and bounded in $L^p$, for all $p>0$, see \cite[Proof of Theorem~3.28]{pitman_combinatorial_2006}.
There exists a constant $c>1$ such that for any $k,n\geq 1$
\begin{equation*}
\frac{1}{c}\left(\frac{k}{n^\alpha}\right)^\theta	\leq \frac{f_{\alpha,\theta}(k)}{f_{1,\theta}(n)}\leq c\left(\frac{k}{n^\alpha}\right)^\theta.
\end{equation*}
Introduce $M_{\alpha,\theta}^*:=\sup_{n\geq1}M_{\alpha,\theta,n}$. Using Doob's maximal inequality, we get
\begin{equation*}
\E_{\alpha,0}\left[\left(M_{\alpha,\theta}^*\right)^p\right]\leq \left(\frac{p}{p-1}\right)^p \E_{\alpha,0}\left[M_{\alpha,\theta}^p\right].
\end{equation*}
Hence 
\begin{align*}
\E_{\alpha,\theta}\left[\left(\sup_{n\geq 1} \frac{K_n}{n^\alpha}\right)^p\right]\leq c 
\E_{\alpha,\theta}\left[\left(M_{\alpha,1}^*\right)^p\right]&=\E_{\alpha,0}\left[\left(M_{\alpha,1}^*\right)^p\cdot M_{\alpha,\theta}\right] \\
&\leq \sqrt{\E_{\alpha,0}\left[\left(M_{\alpha,1}^*\right)^{2p}\right] \E_{\alpha,0}\left[M_{\alpha,\theta}^2\right]} <\infty,
\end{align*}	
where the last inequality uses Cauchy-Schwarz inequality. This finishes the proof of the lemma.
\end{proof}

\subsection{The supremum of the normalised height in Rémy's algorithm}
Let $(T_n)_{n\geq 1}$ be a sequence of trees evolving using Rémy's algorithm. This sequence is a Markov chain in a state space of binary planted trees. Let us denote $H:=\sup_{n\geq 1}(n^{-1/2}\haut(T_n))$. We prove the following bound on the tail of the distribution of $H$.
\begin{lemma}\label{growing:lem:sup hauteur algo remy queue gaussienne}
There exists constants $C_1$ and $C_2$ such that for all $x>0$,
\begin{equation*}
\Pp{H>x}\leq C_1 \exp (-C_2 x^2).
\end{equation*}
In particular, $H$ admits moments of all orders.
\end{lemma}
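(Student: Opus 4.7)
The plan is to combine a classical sub-Gaussian concentration bound for the height of $T_n$ at each fixed $n$ with a reduction of the supremum to a sparse subsequence using the nested structure of Rémy's construction. The trees $(T_n)_{n\geq 1}$ are uniform planted binary trees with $n$ leaves, equivalently conditioned binary Galton--Watson trees, for which results of Flajolet--Odlyzko and Addario-Berry--Devroye--Janson yield constants $A,c>0$ such that
\[
\P(\mathrm{ht}(T_n)\geq t)\leq A\exp(-ct^2/n) \qquad \text{for all } n\geq 1,\ t>0.
\]
This will be the key marginal input.

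To pass to the supremum, I would exploit the fact that $(T_n)$ is an increasing sequence, so $\mathrm{ht}(T_n)$ is non-decreasing in $n$. Hence for $n\in[2^k,2^{k+1})$,
\[
\frac{\mathrm{ht}(T_n)}{\sqrt n}\leq \sqrt 2\cdot\frac{\mathrm{ht}(T_{2^{k+1}})}{\sqrt{2^{k+1}}},
\]
so that, setting $Z_k:=\mathrm{ht}(T_{2^{k+1}})/\sqrt{2^{k+1}}$, one has $H\leq \sqrt 2\cdot \sup_{k\geq 0}Z_k$. In addition, the trivial deterministic bound $\mathrm{ht}(T_n)\leq n$ ensures that only indices $k$ with $2^{k+1}>x^2$ can contribute to the event $\{H>x\}$, reducing the problem to controlling the upper tail of a dyadic sequence.

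The main obstacle is that a naive union bound over $k$ gives a divergent series $\sum_k Ae^{-cx^2/2}$, because the sub-Gaussian estimate does not carry any $k$-dependence. To beat this, I would exploit the joint dependence of the $T_n$ through Rémy's coupling, rather than treating the $Z_k$ as independent: concretely, one can construct an exponential super-martingale of the form $M_n=\exp(\lambda\,\mathrm{ht}(T_n)^2/n)/\psi_n(\lambda)$ for a well-chosen normalization $\psi_n$, and then apply Doob's maximal inequality to control $\sup_n n^{-1/2}\mathrm{ht}(T_n)$ directly. Alternatively, a chaining argument across dyadic scales, using the slow variation of $\mathrm{ht}(T_n)$ between consecutive $n$'s (the height increases by at most one per step), can be implemented. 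This step is the only non-routine part of the proof. Once $\P(H>x)\leq C_1 e^{-C_2x^2}$ is established, the statement that $H$ admits moments of all orders is immediate via $\E[H^p]=p\int_0^\infty x^{p-1}\P(H>x)\,dx$.
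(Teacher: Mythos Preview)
Your setup is sound: the fixed-$n$ sub-Gaussian bound from Addario-Berry--Devroye--Janson is exactly the right marginal input, and the dyadic reduction via monotonicity of $\haut(T_n)$ is a clean way to reduce the supremum to a one-parameter family. You also correctly identify the crux of the problem, namely that a union bound over dyadic scales diverges because the marginal estimate carries no decay in $k$.

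The gap is that neither of your two proposed fixes is actually carried out, and both are non-trivial to make work. For the supermartingale route, you assert that one can find $\psi_n(\lambda)$ so that $\exp(\lambda\,\haut(T_n)^2/n)/\psi_n(\lambda)$ is a supermartingale, but this is not verified and is not obvious: the conditional probability that the height increases at step $n$ is $L_n/(2n-1)$ where $L_n\geq \haut(T_n)$ is the number of edges lying on \emph{some} longest root-to-leaf path, and a short computation shows that $\exp(\lambda\,\haut(T_n)^2/n)$ is, if anything, a \emph{sub}martingale. One could then try to combine Doob's submartingale inequality with a uniform-in-$n$ bound on $\E[\exp(\lambda\,\haut(T_n)^2/n)]$, but establishing the submartingale property rigorously (for all values of $h$ and $n$, not just in a first-order approximation) requires real work and is not in your write-up. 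As for the chaining idea, the observation that the height increases by at most one per step is far too weak: over the dyadic block $[2^k,2^{k+1})$ the height may increase by up to $2^k$, which dwarfs $\sqrt{2^k}$, so increment control alone gives nothing.

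The paper circumvents this difficulty by a completely different mechanism. It introduces the hitting time $\tau_x=\inf\{n:\haut(T_n)>x\sqrt n\}$ and splits $\{H>x\}$ according to whether the almost-sure limit $\lim_n n^{-1/2}\haut(T_n)$ (the height of the limiting Brownian tree) exceeds $x/2$. The first piece is handled by known tail estimates for the height of the Brownian CRT. For the second piece, the key structural input is that on $\{\tau_x=N\}$ there is a vertex $v\in T_N$ at height $\lfloor x\sqrt N\rfloor$, and the height $H_n(v)$ of this \emph{fixed} vertex evolves, under R\'emy's dynamics, as a triangular P\'olya urn with replacement matrix $\begin{bmatrix}1&1\\0&2\end{bmatrix}$. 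This urn has an explicit limit law (a product of an independent Beta and Mittag-Leffler variable), and elementary moment bounds then show that $n^{-1/2}H_n(v)$ is unlikely to drop below $x/2$ unless $N$ is small; the small-$N$ contribution is absorbed using the marginal Addario-Berry bound again. In short, the paper trades your attempted uniform martingale control of $\haut(T_n)$ for an explicit urn description of the height of one distinguished vertex, which makes the ``persistence'' of a large height quantitative.
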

\begin{proof} 
Let $\tau_x:=\inf\enstq{n\geq 1}{\haut(T_n)> x n^{1/2}}$. We write
\begin{align*}
\Pp{H>x}&=\Pp{\tau_x<+\infty}\\
&\leq \Pp{\lim_{n\rightarrow\infty}n^{-1/2}\haut(T_n)> \frac{x}{2}}+\Pp{\tau_x<+\infty,\lim_{n\rightarrow\infty}n^{-1/2}\haut(T_n)\leq \frac{x}{2}}
\end{align*}
We know thanks to \cite{curien_stable_2013} that the trees constructed using Rémy's algorithm converge almost surely in Gromov-Hausdorff topology to Aldous' Brownian tree, so $n^{-1/2}\cdot T_n\rightarrow \cT$ as $n\rightarrow\infty$. By continuity we have $\lim_{n\rightarrow\infty}n^{-1/2}\haut(T_n)=\haut(\cT)$. Estimates \cite{kennedy_distribution_1976} on the height of the Brownian tree show that the first term of the above sum is smaller than $C_1 \exp (-C_2 x^2)$, for some choice of constants $C_1$ and $C_2$. Fix some $N_0\geq 1$ that we will chose later. Then compute
\begin{align*}
\Pp{\tau_x<+\infty,\lim_{n\rightarrow\infty}n^{-1/2}\haut(T_n)\leq \frac{x}{2}}
&=\sum_{N=1}^{+\infty}\Pp{\tau_x=N}\Ppsq{\haut(\cT)\leq \frac{x}{2}}{\tau_x=N}\\
&\leq \sum_{N=1}^{N_0}\Pp{\tau_x=N}+\sup_{N\geq N_0} \Ppsq{\haut(\cT)\leq \frac{x}{2}}{\tau_x=N}\\
&\leq N_0 C_1 \exp (-C_2 x^2)+\sup_{N\geq N_0} \Ppsq{\haut(\cT)\leq \frac{x}{2}}{\tau_x=N},
\end{align*}
where in the last inequality we use the fact that for all $N$,
\begin{equation*}
\Pp{\tau_x=N}\leq \Pp{\haut(T_N)\geq xN^{1/2}}\leq C_1 \exp (-C_2 x^2),
\end{equation*} 
using the results of \cite{addario-berry_most_2019}.

Now, let us reason conditionally on the event $\left\lbrace \tau_x=N\right\rbrace$. On that event, there exists in the tree $T_N$ at least one path of length $\lfloor xN^{1/2} \rfloor$ starting from the root ending at a vertex $v$. The height $H_n(v)$ of the vertex $v$ at time $n$ evolves under the same dynamics as the number of balls in a triangular urn model with replacement matrix $ \begin{bmatrix} 1 & 1 \\ 0 & 2\end{bmatrix}$, and starting proportion $(\lfloor xN^{1/2} \rfloor,2N+1-\lfloor xN^{1/2} \rfloor)$, see \cite{janson_limit_2006} for definition and results for those urns. 
Using a theorem of \cite{janson_limit_2006}, as $n\rightarrow\infty$, we have the almost sure convergence
\begin{equation*}
\frac{H_n(v)}{n^{1/2}}\longrightarrow W_N^x,
\end{equation*}
where $W_N^x=\alpha_N^x\cdot M_N$ is the product of two independent variables, with 
\begin{equation}
	\beta_N^x\sim \mathrm{Beta}(\lfloor xN^{1/2} \rfloor,2N+1-\lfloor xN^{1/2} \rfloor) \qquad \text{and} \qquad M_N\sim \ML\left(\frac{1}{2},\frac{2N+1}{2}\right). 
\end{equation}
Then we write
\begin{align*}
\Pp{W_N^x\leq \frac{x}{2}}&=	\Pp{\beta_N^x\cdot M_N\leq \frac{x}{2}}\\
&\leq \Pp{\beta_N^x\cdot M_N\leq \frac{x}{2},\ M_N\geq \frac{3}{2}N^{1/2} } + \Pp{M_N\leq \frac{3}{2}N^{1/2}}\\
&\leq \Pp{\beta_N^x\leq \frac{1}{3}\cdot x \cdot N^{-1/2}} + \Pp{M_N\leq \frac{3}{2}N^{1/2}}.
\end{align*}
We bound the two terms in the last sum using Chebychev inequality, using
\begin{align*}
\Ec{\beta_N^x}=\frac{\lfloor xN^{1/2} \rfloor}{2N+1}\sim \frac{x}{2}N^{-1/2}, \qquad \Var{\beta_N^x}=\frac{\lfloor xN^{1/2} \rfloor (2N+1 - \lfloor xN^{1/2} \rfloor)}{(2N+1)^2(2N+2)}\sim \frac{x}{4}N^{-3/2}.
\end{align*}
We also have
\begin{align*}
\Ec{M_N}=\frac{\Gam{N+1/2}\Gam{2N+2}}{\Gam{2N+1}\Gam{N+1}}\sim 2 N^{1/2},
\end{align*}
and 
\begin{align*} \Var{M_N}=\frac{\Gam{N+1/2}\Gam{2N+3}}{\Gam{2N+1}\Gam{N+3/2}}-\left(\frac{\Gam{N+1/2}\Gam{2N+2}}{\Gam{2N+1}\Gam{N+1}}\right)^2\leq 1.
\end{align*}
Hence,
\begin{align*}
\Pp{\beta_N^x\leq \frac{1}{3}\cdot x \cdot N^{-1/2}}&\leq \Pp{\abs{\beta_N^x-\Ec{\beta_N^x}}\geq \frac{1}{8}\cdot x \cdot N^{-1/2}}\\
&\leq 8 x^2 N \Var{\beta_N^x}\\
&\leq C x^3 N^{-1/2}.
\end{align*}
with $C$ a constant that is independent of $x$ and $N$. We also have
\begin{align*}
\Pp{M_N\leq \frac{3}{2}N^{1/2}}&\leq \Pp{\abs{M_N-\Ec{M_N}}\geq \frac{1}{3}N^{1/2}}\\
&\leq 3 N^{-1} \Var{M_N}\\
&\leq C' N^{-1},
\end{align*}
with $C'$ another constant that does not depend on $x$ or $N$. All this analysis  was done conditionally on the event $\left\lbrace \tau_x=N\right\rbrace$, so in fact, we have for all $N\geq N_0$,
\begin{equation*}
\Ppsq{\haut(\cT)\leq \frac{x}{2}}{\tau_x=N} \leq  C x^3 N^{-1/2} + C' N^{-1}\leq C x^3 N_0^{-1/2} + C' N_0^{-1}
\end{equation*}
Now we just take $N_0=\exp\left(\frac{C_2}{2} x^2\right)$ and the result follows.
\end{proof}

\printbibliography
\end{document}